\newtheorem{teo}{Theorem}
\newtheorem{prop}[teo]{Proposition}
\newtheorem{lem}[teo]{Lemma}
\newtheorem{conj}[teo]{Conjecture}
\newtheorem{rem}[teo]{Remark}
\newcommand{\Lie}{{\rm Lie}}
\newcommand{\der}{{\rm der}}
\newcommand{\ad}{{\rm ad}}
\newcommand{\CC}{{\mathbb C}}
\newcommand{\RR}{{\mathbb R}}
\newcommand{\ZZ}{{\mathbb Z}}
\newcommand{\QQ}{{\mathbb Q}}
\newcommand{\NN}{{\mathbb N}}
\newcommand{\PP}{{\mathbb P}}
\newcommand{\GG}{{\mathbb G}}
\newcommand{\SSS}{{\mathbb S}}
\newcommand{\AAA}{{\mathbb A}}
\newcommand{\OOO}{{\mathcal O}}
\newcommand{\Opt}{{\rm Opt}}
\newcommand{\cF}{{\mathcal F}}
\newcommand{\cA}{{\mathcal A}}
\newcommand{\cP}{{\mathcal P}}
\newcommand{\cS}{{\mathcal S}}
\newcommand{\cL}{{\mathcal L}}
\newcommand{\cR}{{\mathcal R}}
\newcommand{\gG}{{\bf G}}
\newcommand{\gX}{{\bf X}}
\newcommand{\gGL}{{\bf GL}}
\newcommand{\gGSp}{{\bf GSp}}
\newcommand{\gSp}{{\bf Sp}}
\newcommand{\gH}{{\bf H}}
\newcommand{\gN}{{\bf N}}
\newcommand{\gZ}{{\bf Z}}
\DeclareMathOperator{\mult}{mult}
\DeclareMathOperator{\codim}{codim}
\newcommand{\vxi}{{\boldsymbol\xi}}
\def\rest#1{{\vert_{#1}}} 
\newcommand{\mo}[1][k]{M^{\smash{(#1)}}}
\def\pd#1#2{\tfrac{\partial#1}{\partial#2}}
\def\A{{\mathbb{A}}}
\def\N{{\mathbb{N}}}
\title{Effective computations for weakly optimal subvarieties}
\author{Gal Binyamini}
\address{Binyamini: Weizmann Institute of Science, Rehovot, Israel}
\email{gal.binyamini@weizmann.ac.il}
\author{Christopher Daw}
\address{Daw: Department of Mathematics and Statistics, University of Reading,
    White\-knights,  PO Box 217,  Reading,  Berkshire RG6 6AH,  United Kingdom}
\email{chris.daw@reading.ac.uk}
\subjclass[2010]{}
\keywords{}
\begin{document}

\maketitle

\begin{abstract}

Ren and the second author established that the weakly optimal subvarieties (e.g. maximal weakly special subvarieties) of a subvariety $V$ of a Shimura variety arise in finitely many families. In this article, we refine this theorem by (1) constructing a finite collection of algebraic families whose fibers are precisely the weakly optimal subvarieties of $V$; (2) obtaining effective degree bounds on the weakly optimal locus and its individual members; (3) describing an effective procedure to determine the weakly optimal locus.
\end{abstract}

\setcounter{tocdepth}{1}
\tableofcontents

\section{Introduction}

This article is concerned with effective results on the (geometric side of the) Zilber--Pink conjecture for (pure) Shimura varieties. The conjecture itself is as follows.

\begin{conj}[Zilber--Pink]\label{ZP:intro}
Let $S$ be a Shimura variety and let $V$ be a Hodge generic, irreducible, algebraic subvariety of $S$. Then the intersection of $V$ with the union of the special subvarieties of $S$ of codimension at least $\dim V+1$ is not Zariski dense in $V$.
\end{conj}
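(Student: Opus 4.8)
The plan is to follow the Pila--Zannier strategy, reducing Conjecture~\ref{ZP:intro} to a counting statement for rational points on definable sets in an o-minimal structure. First I would fix a Shimura datum $(\gG,\gX)$ and a congruence subgroup with $S=\Gamma\backslash\gX$, together with the uniformisation $\pi\colon\gX\to S$; after passing to a suitable fundamental domain $\cF\subset\gX$, the restriction $\pi\rest{\cF}$ is definable in $\RR_{\mathrm{an},\exp}$ by the work of Klingler--Ullmo--Yafaev and Bakker--Klingler--Tsimerman. The indispensable geometric input is then the Ax--Schanuel theorem for Shimura varieties (Mok--Pila--Tsimerman, Gao--Klingler), which controls the Zariski closures of components of $\pi^{-1}(V)$ and identifies when such components are weakly special.

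Second, I would reduce the conjecture to a finiteness statement about \emph{optimal} subvarieties in the sense of Habegger--Pila: it suffices to show that $V$ contains only finitely many optimal subvarieties of positive defect, equivalently that the atypical locus of $V$ is a finite union of positive-dimensional atypical subvarieties together with finitely many atypical points. Here is exactly where the results of Ren and the second author, together with the refinement proved in this article, enter: the weakly optimal subvarieties of $V$ are exhibited in finitely many algebraic families with effective degree bounds, so the possible Mumford--Tate groups, the geometry of the parametrising families, and the ambient weakly special subvarieties are all under quantitative control. What remains is to bound the zero-dimensional contributions.

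Third comes the arithmetic heart of the argument. For a pre-special point $\tilde s\in\cF$ whose image lies in an atypical intersection one needs a lower bound of the shape $\#\,\Gal(\overline\QQ/F)\cdot \pi(\tilde s)\gg_{\varepsilon}\Delta(\tilde s)^{\delta}$ for an appropriate discriminant invariant $\Delta$, together with a polynomial upper bound $H(\tilde s)\ll \Delta(\tilde s)^{c}$ for the height of the representative in $\cF$. Feeding these into the Pila--Wilkie counting theorem forces all but finitely many such points to lie on a positive-dimensional weakly special subvariety of $S$ contained in $V$. One then runs an induction on $\dim V$: the positive-dimensional part of the atypical locus is swept out by the finitely many families furnished by this article, and restricting to a member that is itself Hodge generic inside a smaller Shimura subvariety strictly decreases the dimension, so the induction closes.

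The main obstacle is unquestionably the third step. The required Galois orbit lower bounds --- the ``large Galois orbits'' hypothesis --- are presently known only in restricted cases (e.g.\ inside $\cA_g$ for small $g$, for products of modular curves, and conditionally on GRH in some settings), and the height bounds for pre-special subvarieties rest on delicate transcendence and isogeny estimates (averaged Colmez-type formulas, Masser--W\"ustholz). Absent these inputs, the strategy yields Conjecture~\ref{ZP:intro} only conditionally; the point of the present effective results is precisely that, once the arithmetic lower bounds are supplied, the counting step --- and hence the whole argument --- becomes fully effective.
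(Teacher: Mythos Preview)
The statement you were asked to prove is Conjecture~\ref{ZP:intro}, and the paper does not prove it: it is stated as an open conjecture, and the paper's contribution is to make the \emph{geometric} side of the Pila--Zannier strategy effective (Theorems~\ref{thm:main}, \ref{degbound:intro}, \ref{degreebound:intro}). There is therefore no ``paper's own proof'' to compare your proposal against.

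Your proposal is not a proof but an accurate summary of the Pila--Zannier strategy from \cite{DR}, and you correctly identify the genuine gap yourself: the large Galois orbits hypothesis and the associated height bounds in your third step are not known in general. So as a proof attempt it fails exactly where you say it does, and what you have written is a conditional argument, not a proof of the conjecture. One small inaccuracy: you attribute Ax--Schanuel in part to ``Gao--Klingler''; in the pure Shimura setting used here it is due to Mok--Pila--Tsimerman \cite{MPT:AS}, as the paper cites.
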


By \cite[Theorem 12.4]{BD}, Conjecture \ref{ZP:intro} is equivalent to the seemingly stronger variations involving atypical intersections and optimal subvarieties, the latter of which states that $V$ contains only finitely many optimal subvarieties. We state this version and the necessary definitions in Section \ref{SSZP}.

In \cite{DR}, Ren and the second author outline a Pila--Zannier strategy for proving Conjecture \ref{ZP:intro}, applying the familiar combination of o-minimality, functional transcendence, and arithmetic (see, in particular, \cite{hp:o-min}). The unconditional aspect of that strategy is what might be considered the geometric Zilber--Pink conjecture for Shimura varieties. Gao \cite{Gao:Ax} refers to its generalization to mixed Shimura varieties as {\it a finiteness result \`a la Bogomolov}. The statement is as follows. We refer to Section \ref{wswo} for the relevant facts and definitions, recalling here only that an optimal subvariety is, in particular, a weakly optimal subvariety.

\begin{teo}[\cite{DR}, Proposition 6.3]
Let $S=\Gamma\backslash X$ be a Shimura variety and let $V$ be an irreducible algebraic subvariety of $S$. There exists a finite set $\Sigma$ of pairs $(X_\gH,X_1\times X_2)$ with $X_\gH$ a pre-special subvariety of $X$ and $X_\gH=X_1\times X_2$ a $\QQ$--splitting of $X_\gH$, such that, for any weakly optimal subvariety $W$ of $V$, the weakly special closure of $W$ in $S$ is equal to the image in $S$ of $X_1\times\{x_2\}$, for some $(X_\gH,X_1\times X_2)\in\Sigma$ and some $x_2\in X_2$.
\end{teo}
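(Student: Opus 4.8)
The plan is to reduce the statement to a finiteness assertion for the \emph{types} of the weakly special subvarieties that can occur, and then to extract that finiteness from the hyperbolic Ax--Lindemann theorem together with o-minimality. As a first reduction I would replace $S$ by the smallest special subvariety containing $V$, so that, writing $\gG=\MT(V)$, the group $\gG$ becomes the generic Mumford--Tate group attached to a connected component of $S$; this changes neither the weakly optimal subvarieties of $V$ nor their weakly special closures, and weakly special subvarieties of the new $S$ remain weakly special in the original one. Recall next that every weakly special subvariety of $S$ is the image of $X_1\times\{x_2\}$ for some pre-special $X_\gH\subseteq X$, some $\QQ$--splitting $X_\gH=X_1\times X_2$, and some $x_2\in X_2$; call the $\Gamma$--conjugacy class of $(X_\gH,X_1\times X_2)$ its \emph{type}. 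Fixing one representative of a type, the assignment $x_2\mapsto(\text{image of }X_1\times\{x_2\})$ is an algebraic family exhausting all weakly special subvarieties of that type, since the image in $S$ is insensitive to $\Gamma$--translation of $X_\gH$. Thus the theorem is equivalent to the claim that only finitely many types occur among the weakly special closures $\langle W\rangle_{\mathrm{ws}}$ of weakly optimal subvarieties $W$ of $V$.

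Two structural facts organise the argument. First, if $W\subseteq V$ is weakly optimal then $W$ is an irreducible component of $V\cap\langle W\rangle_{\mathrm{ws}}$: otherwise $W$ would be strictly contained in some irreducible component $W'$ of $V\cap\langle W\rangle_{\mathrm{ws}}$, whence $\langle W'\rangle_{\mathrm{ws}}\subseteq\langle W\rangle_{\mathrm{ws}}$ and $\dim W'>\dim W$ would give $\delta_{\mathrm{ws}}(W')<\delta_{\mathrm{ws}}(W)$, contradicting weak optimality. Second, by the description of the weakly special closure through algebraic monodromy (Andr\'e, Moonen), the type of $\langle W\rangle_{\mathrm{ws}}$ is determined by the pair $(\MT(W),\gH_W)$, where $\gH_W$ is the connected algebraic monodromy group of $W^{\mathrm{sm}}$: it is a normal $\QQ$--subgroup of $\MT(W)^{\der}$, the almost-direct factorisation of $\MT(W)^{\der}$ with factor $\gH_W$ induces the $\QQ$--splitting of $X_{\MT(W)}$, and $\langle W\rangle_{\mathrm{ws}}$ is the weakly special subvariety of the associated type through any point of $W$. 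Hence it suffices to show that, as $W$ ranges over the weakly optimal subvarieties of $V$, the pairs $(\MT(W),\gH_W)$ fall into finitely many $\Gamma$--conjugacy classes.

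Fix a semialgebraic fundamental set $\cF\subseteq X$ for $\Gamma$ on which the uniformization $u\colon X\to S$ is definable in $\RR_{\mathrm{an},\exp}$. For weakly optimal $W$, a lift $\tilde w$, and the component $\tilde V$ of $u^{-1}(V)$ through $\tilde w$, the component of $u^{-1}(\langle W\rangle_{\mathrm{ws}})$ through $\tilde w$ is the totally geodesic semialgebraic set $\tilde T=\gH_W(\RR)^+\cdot\tilde w$, and by the first fact above the relevant component of $\tilde V\cap\tilde T$ has dimension $\dim W$. I would then invoke the o-minimal machinery behind the hyperbolic Ax--Lindemann theorem of Klingler--Ullmo--Yafaev (and its extensions): the pre-weakly-special subvarieties of $X$ of a fixed real-isogeny type form a single $\gG(\RR)^+$--orbit, hence a connected definable family, and their intersections with $\tilde V\cap\cF$ assemble into a definable family of analytic sets; one argues that the \emph{rational} members of such a family arising as closures of weakly optimal subvarieties --- precisely those meeting the dimension constraint coming from the first fact --- cannot accumulate, since Ax--Lindemann applied to an algebraic limit of such members would absorb them into a single pre-weakly-special family, forcing the type to be constant along each connected component. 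As there are only finitely many real-isogeny types of semisimple subgroups of Hermitian type in $\gG(\RR)^+$, only finitely many types of $\langle W\rangle_{\mathrm{ws}}$ occur; taking one representative pair $(X_\gH,X_1\times X_2)$ for each yields the desired finite set $\Sigma$.

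The genuinely difficult point is this last step, where I expect essentially all the work to lie. The collection of pre-weakly-special subvarieties of $X$ is countably infinite but not definable --- a countable definable set would be finite --- so o-minimality alone cannot detect the $\QQ$--forms of the monodromy groups $\gH_W$; it is Ax--Lindemann that supplies the arithmetic forcing them into finitely many conjugacy classes. Carrying this out requires (a) converting weak optimality of $W$ into a genuine maximality property of $\tilde T$ against the \emph{algebraic} perturbations that the o-minimal structure can see, rather than against arbitrary pre-weakly-special ones, and (b) verifying that the type --- which packages a $\QQ$--group together with a $\QQ$--splitting --- is locally constant on the parameter spaces produced in (a), i.e.\ that an algebraic deformation of $\tilde T$ cannot change the $\QQ$--isogeny type of $\gH_W$. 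The remaining ingredients --- the reduction to the smallest special subvariety, the choice of representatives, and the construction of the algebraic families over the $X_2$--factors --- are routine.
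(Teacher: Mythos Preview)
Your strategy is essentially the original o-minimal argument of Daw--Ren: work on a definable fundamental set $\cF\subset X$, parametrise pre-weakly-special subvarieties by a definable family indexed by real conjugates, and use functional transcendence to force the $\QQ$--types arising from weakly optimal $W$ into finitely many classes. That is a valid route to the statement, but two points deserve correction. First, the transcendence input you need is the \emph{weak hyperbolic Ax--Schanuel} theorem (Zariski optimal intersection components of $\pi^{-1}(V)$ have pre-weakly-special Zariski closure), not Ax--Lindemann; the latter only handles the defect-zero case of maximal weakly special subvarieties, whereas for general weakly optimal $W$ one must control the Zariski closure of a component of $\pi^{-1}(V)\cap\tilde T$ that is \emph{not} itself algebraic in $X$. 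Second, your steps (a)--(b) are precisely the substance of the [DR] argument and cannot be dispatched by ``Ax--Lindemann applied to an algebraic limit''; one really has to set up the incidence variety of pairs (point of $\cF\cap\pi^{-1}(V)$, candidate pre-weakly-special through it) with the defect condition, and then argue by o-minimal cell decomposition plus Ax--Schanuel that each cell contributes a single type.

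The present paper, however, reproves the statement by a genuinely different mechanism that avoids o-minimality altogether. It transfers the problem to the algebraic standard principal bundle $P=\Gamma\backslash(X\times\gG(\CC))$ with its canonical foliation $\cF$, and to a Chow variety $\Omega$ of subvarieties of $\check X$. On $P\times\Omega$ one defines, for each $d$, a locally closed \emph{algebraic} set $\Pi(d)$ by imposing two conditions on the leafwise intersection dimension $d(p,Y)=\dim_p(\cL_p\cap\beta^{-1}(Y)\cap\pi_P^{-1}(V))$; local closedness and degree bounds are proved using multiplicity estimates from differential algebra (Gabrielov--Khovanskii). Weak Ax--Schanuel then shows that for $(p,Y)\in\Pi(d)$ the translate $g_pY$ is the compact dual of a pre-weakly-special subvariety, and a countability/connectedness argument on a flat structure (Lemma~\ref{fibers}) assigns to each irreducible component of $\Pi(d)$ a single triple $(X_\gH,X_1\times X_2)$. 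Finiteness of the set $\Sigma$ is then simply finiteness of irreducible components of the algebraic sets $\Pi(d)$ for $0\le d\le\dim X$. What this buys over your approach is effectivity: the $\Pi(d)$ come with explicit equations and polynomial degree bounds in $\deg V$, which is the whole point of the paper; your o-minimal argument is intrinsically ineffective.
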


In this article, we give a refined version of this theorem (see
Theorem \ref{main-const-thm}). We summarize this refinement as follows. We denote by $P$ the so-called {\it standard principal bundle} associated with $S$ and,
by $\Omega$, a Chow variety parametrizing certain
subvarieties of the compact dual $\check X$ of $X$ (see Sections \ref{subvarX}, \ref{SPBs}, and \ref{mainconst} for the details). Similarly, for a triple $T=(X_\gH,X_1\times X_2)$, as above, we denote by $P_T$ the standard principle bundle associated with a Shimura variety $S_T$ corresponding to $X_\gH$ and, by $\Omega_T$ a Chow subvariety contained in $\Omega$ parametrizing certain subvarieties of $\check{X}_\gH$. As such, we obtain algebraic morphisms
\begin{center}
\begin{tikzcd}
    & P_T\times\Omega_T \arrow[d, "\pi_T"] \arrow[r, "\iota_{P_T}"] & P\times\Omega \arrow[d] \\
S_2 & S_T \arrow[l, "\phi_T"'] \arrow[r]                            & S,                      
\end{tikzcd}
\end{center}
where $S_2$ is a Shimura variety corresponding to $X_2$, and, for a subset $\Pi$ of $P\times\Omega$, we denote by $\Pi_T$ the union of the Zariski closures of the fibers of the map $\phi_T$ restricted to $\pi_T(\iota_{P_T}^{-1}(\Pi))$.

\begin{teo}\label{thm:main}
Let $S$ be a Shimura variety and let $V$ be an irreducible algebraic subvariety of $S$. For every $d\in\NN$, there exists a locally closed algebraic subset $\Pi(d)$ of $P\times\Omega$ and, for each of the irreducible components $\Pi(d)^\circ$ of $\Pi(d)$, an associated triple $T=(X_\gH,X_1\times X_2)$ such that the following holds.

  \begin{itemize}
  \item[(i)] Let $\Pi(d)^\circ$ be an irreducible component of $\Pi(d)$ and let $T$ be its associated triple. Then $\Pi(d)^\circ_T$ is constructible.
  \item[(ii)] Let $W$ be a weakly optimal subvariety of $V$ of weakly special defect $d$. Then there exists an irreducible component
    $\Pi(d)^\circ$ of $\Pi(d)$ (with its associated triple $T$) such that $W$ is the image in $S$ of some irreducible component of a fiber $\phi_T\rest{\Pi(d)^\circ_T}$.

  \item[(iii)] Let $\Pi(d)^\circ$ denote an irreducible component of
    $\Pi(d)$ (with its associated triple $T$). If $W$ is an irreducible component of a fiber of
    $\phi_T\rest{\Pi(d)^\circ_T}$ then the image in $S$ of $W$ is a weakly optimal subvariety of
    $V$ of weakly special defect $d$.
  \end{itemize}
\end{teo}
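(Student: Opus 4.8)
\emph{Overall approach.}
The plan is to upgrade the finiteness statement \cite[Proposition~6.3]{DR} to an algebraic one: having fixed the defect $d$, I will carve out inside $P\times\Omega$ a locally closed locus whose ``slices'', read off through the maps $\iota_{P_T}$, $\pi_T$, $\phi_T$, are exactly the weakly optimal subvarieties of $V$ of weakly special defect $d$. The point of passing to $P$ and $\Omega$ is that together they replace the transcendental uniformization $X\to S$ by algebraic data: the preimage of $V$ in $P$ and the tautological family of pre-weakly-special subvarieties over $\Omega$ become honest algebraic objects, which may be intersected inside an ambient quasi-projective variety.

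\emph{Set-up.}
Fix $d\in\NN$. By \cite[Proposition~6.3]{DR} there is a finite set $\Sigma$ of triples $T=(X_\gH,X_1\times X_2)$ such that the weakly special closure $\langle W\rangle_{\mathrm{ws}}$ of every weakly optimal subvariety $W$ of $V$ is the image in $S$ of $X_1\times\{x_2\}$ for some $T\in\Sigma$ and some $x_2\in X_2$. For each such $T$ I will recall from Sections~\ref{subvarX}, \ref{SPBs} and \ref{mainconst} the bundle $P_T\to S_T$, the Chow subvariety $\Omega_T\subseteq\Omega$ --- a point $c\in\Omega_T$ parametrising the Zariski closure $Z_c\subseteq\check X_\gH$ of a pre-weakly-special $X_1\times\{x_2(c)\}$ --- and the maps $\iota_{P_T}$, $\pi_T$, $\phi_T$, noting that these are compatible with the respective uniformizations and that, after pushing down to $S_T$, the fibers of $\phi_T$ are exactly the images in $S_T$ of the subvarieties $X_1\times\{x_2\}$. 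Thus ``taking fiberwise Zariski closures over $\phi_T$'' is the algebraic incarnation of ``isolating a weakly special subvariety of type~$T$''.

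\emph{Construction of $\Pi(d)$.}
Since $P\to S$ is an algebraic morphism, the preimage $P_V\subseteq P$ of $V$ is an algebraic subvariety; likewise the universal family over $\Omega$ and the realization map $P\to\check X$ are algebraic. I will define $\Pi(d)$ so that a point $(p,c)$ with $c\in\Omega_T$ lies in the piece of $\Pi(d)$ attached to $T$ precisely when $p\in P_V$ and the realization of $p$ lies on $Z_c$ --- so that the point $\bar s\in S$ underlying $p$ lies in $V\cap F_c$, with $F_c$ the image of $X_1\times\{x_2(c)\}$ --- subject to: (a) the irreducible component $W(p,c)$ of $V\cap F_c$ through $\bar s$ has dimension $\dim F_c-d$; (b) $W(p,c)$ is weakly optimal. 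Condition~(a) is locally closed, by semicontinuity of the local fiber dimension of the family $\{V\cap F_c\}$ over $\Omega_T$. For~(b) I will use the standard reduction that $W$ is weakly optimal if and only if it is not properly contained in any weakly optimal $W'\subseteq V$ with $\dim\langle W'\rangle_{\mathrm{ws}}-\dim W'\le\dim\langle W\rangle_{\mathrm{ws}}-\dim W$, together with the fact (again \cite[Proposition~6.3]{DR}) that every such $W'$ is an irreducible component of some $V\cap F_{c'}$ with $c'$ in one of the finitely many families $\Omega_{T'}$, $T'\in\Sigma$. Consequently~(b) becomes a statement quantifying over the algebraic parameter spaces $\Omega_{T'}$ of an (a)-type constructible condition, and hence, by Chevalley's theorem, cuts out a constructible subset of $P\times\Omega$; taking $\Pi(d)$ to be a suitable locally closed refinement --- stratified so that each irreducible component meets a single $\Omega_T$, to which it is then attached --- completes the construction. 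One checks en route that (a) and~(b) force $F_c=\langle W(p,c)\rangle_{\mathrm{ws}}$, so that the weakly special defect of $W(p,c)$ genuinely equals~$d$.

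\emph{Verification and the main obstacle.}
Part~(i) follows by applying Chevalley's theorem to $\iota_{P_T}^{-1}$ and $\pi_T$, together with the fact that the union of the fiberwise Zariski closures of a constructible set relative to the algebraic family of fibers of $\phi_T$ is again constructible. For~(ii), given $W$ weakly optimal of defect $d$, \cite[Proposition~6.3]{DR} produces $T\in\Sigma$ and $x_2$ with $\langle W\rangle_{\mathrm{ws}}=F$ the image of $X_1\times\{x_2\}$; choosing $p\in P_T$ over a general point of $W$ and $c\in\Omega_T$ with $Z_c=\overline{X_1\times\{x_2\}}$, one checks that $(p,c)$ lies in the component $\Pi(d)^\circ$ attached to $T$ --- using that a weakly optimal $W$ is the irreducible component of $V\cap\langle W\rangle_{\mathrm{ws}}$ through any of its general points and has the expected dimension --- and that $W$ is recovered, via the finite map $S_T\to S$, as an irreducible component of the fiber of $\phi_T\rest{\Pi(d)^\circ_T}$ over $x_2$. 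For~(iii), conversely, such an irreducible component is by construction the lift to $S_T$ of an irreducible component of some $V\cap F_c$ of codimension $d$ in $F_c$ satisfying the weak-optimality condition~(b), and one checks that $S_T\to S$ carries it to a weakly optimal subvariety of $V$ of weakly special defect $d$. The main obstacle is the middle step: making rigorous that ``the component of $V\cap F_c$ through the point underlying $p$'', and its weak optimality, vary constructibly over $P\times\Omega$ --- that is, transferring the a~priori transcendental data ($V\subseteq S$ and the uniformization) to genuinely algebraic data on $P$ and $\Omega$, verifying that the reduction of weak optimality to the finite family $\Sigma$ survives the algebraization, and that the resulting locus can be taken locally closed rather than merely constructible. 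A further, essentially bookkeeping, difficulty is the interplay between $S_T$ and $S$ (a finite but non-injective map) and between $\Omega$ and its sub-Chow varieties $\Omega_T$, which is what makes the triple attached to each irreducible component of $\Pi(d)$ well defined.
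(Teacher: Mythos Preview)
Your high-level picture is right, but the central step fails as written. You assert that condition~(a) --- ``the irreducible component $W(p,c)$ of $V\cap F_c$ through $\bar s$ has dimension $\dim F_c-d$'' --- is locally closed by semicontinuity of fiber dimension in the family $\{V\cap F_c\}_{c\in\Omega_T}$. But this family is \emph{not} algebraic over $\Omega_T$: a point $c\in\Omega_T$ parametrizes a subvariety $Z_c=\check X_{\gH,x_2(c)}$ of the compact dual $\check X$, and passing from $Z_c$ to the weakly special $F_c=\pi(Z_c\cap X)\subset S$ requires the transcendental uniformization $\pi$. (Equivalently: the weakly specials of type $T$ in $S$ are parametrized algebraically by $S_2$, via fibers of $\phi_T$, not by $\Omega_T\cong\check X_2$; there is no algebraic map between $S_2$ and $\Omega_T$.) The same problem recurs in condition~(b), where you quantify over $c'\in\Omega_{T'}$ and need the inclusion $F_c\subset F_{c'}$ and the dimensions of components of $V\cap F_{c'}$ to be algebraic in $c'$. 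You flag exactly this in your ``main obstacle'' paragraph, but you do not resolve it --- and it is the entire content of the construction.

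The paper's resolution is the one ingredient you omit: the canonical flat connection on $P$ and its associated foliation $\cF$. The key computation (Lemma~\ref{intlem}) shows that for $p=[x,g_p]$ the local dimension $\dim_x(g_pY\cap\pi^{-1}(V))$ --- which is what you want, upstairs in $X$ --- equals $\dim_p(\cL_p\cap\beta^{-1}(Y)\cap\pi_P^{-1}(V))$, an intersection of the algebraic set $\beta^{-1}(Y)\cap\pi_P^{-1}(V)$ with a leaf of an algebraic foliation. That this leafwise-dimension function has Zariski-closed superlevel sets is then a statement of differential algebraic geometry (Proposition~\ref{prop:Sigma-bound}, via Gabrielov--Khovanskii multiplicity estimates), and this is what makes $\Pi(d)$ genuinely locally closed rather than merely ``constructible after a suitable refinement''. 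Note also that the paper's optimality conditions $\delta_1,\delta_2$ quantify over \emph{all} $Y\in\Omega$, not just pre-weakly-special ones; that the surviving $Y$ are pre-weakly-special is deduced a~posteriori from weak hyperbolic Ax--Schanuel (Lemma~\ref{isweaksp}). Your approach instead takes \cite[Proposition~6.3]{DR} as input to restrict to $\Omega_T$ from the outset; even if the transcendence issue above could be circumvented, this would forfeit the effectivity that is the paper's main point.
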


In short, we show that the weakly optimal subvarieties of $V$ are precisely the
fibers of finitely many constructible families. 

However, our construction allows us to go further, applying estimates from differential
algebraic geometry to obtain effective upper
bounds for the degrees of the families $\Pi(d)$, as well as the
individual weakly optimal subvarieties. Moreover, our construction
also produces an effective description of this {\it unlikely intersection}
locus, in the form of an explicit system of algebraic equations. 

To
formulate such a result explicitly, we fix a
coordinate system on the set $P\times\Omega$ as follows. Let $U\subset S$ denote an open dense subset such that $P$ trivializes
over $U$, i.e. $P\vert_U\simeq U\times\gG(\CC)$. Fixing a faithful
representation of $\gG$ we may realize it as a subvariety of some affine space. We use (1) a very-ample power of the Baily-Borel line bundle for a
set of projective coordinates on $U$, (2) the matrix entries as
coordinates on $\gG(\CC)$, and (3) the standard Chow coordinates on the Chow
variety $\Omega$. All degrees below on $U\times \gG(\CC)\times\Omega$ are
taken with respect to the Segre product of these three coordinate systems.

If $X$ is a locally-closed subset of $U\times \gG(\CC)\times\Omega$
then it is of the form $Y\setminus Z$ where $Y$ and $Z$ are Zariski
closed. We define the {\it complexity} of $X$ to be
$\deg(Y)+\deg(Z)$. We give a bound for the complexity of the sets
$\Pi(d)$, as well as a procedure for computing the equations and
inequations defining $\Pi(d)$, in terms of the equations defining
$V$. The construction depends on certain data associated with $S$,
namely, (1) the equations describing the projective embedding of
$U\times G\times\Omega$ and (2) the equations describing the canonical
connection $\nabla$ on $P$ (written in the chosen coordinate system on
$U\times \gG(\CC)$). Throughout the paper, with the exception of
Section~\ref{sec:fully-effective}, our effective bounds are assumed to
depend on this data (we simply say {\it depending only on $S$}). In
the final Section~\ref{sec:fully-effective}, we address the issue of
obtaining fully effective computations in the case $S=\cA_g$ using
Gauss-Manin connections.

\begin{teo}\label{degbound:intro}
  The complexity of $\Pi(d)$ is bounded by $f_S(\deg(V))$ for some
  polynomial $f_S$ depending only on $S$. Moreover, we produce an
  explicit system of equalities and inequations for $\Pi(d)$ of
  degrees bounded by $f_S(\deg(V))$.
\end{teo}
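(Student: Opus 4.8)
\emph{Sketch of proof.}
The plan is to carry out a complexity audit of the construction of $\Pi(d)$ given in Section~\ref{mainconst} in the course of proving Theorem~\ref{main-const-thm}. I would revisit that construction one operation at a time and keep track of how the complexity of the set being manipulated depends on $\deg(V)$ on the one hand and on the fixed data attached to $S$ on the other --- the latter being the degrees of the equations of the projective embedding of $U\times\gG(\CC)\times\Omega$ (built from a very ample power of the Baily--Borel bundle, the matrix entries of $\gG$, and the Chow coordinates of $\Omega$) and the degrees of the equations describing the canonical connection $\nabla$ on $P$ in the chosen coordinates.

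The first step is to isolate the elementary operations out of which $\Pi(d)$ is assembled. Inspecting the construction, these are: (a) writing down the defining equations of the incidence loci in $P\times\Omega$ coming from the projective embedding, from the Chow coordinates, and from the equations cutting out $V$, together with the pullbacks of all of these under the finitely many $S$--dependent algebraic maps $\iota_{P_T},\pi_T,\phi_T$; (b) repeated covariant differentiation along $\nabla$, which imposes the horizontality and flatness conditions characterizing the (pre-)weakly special data; (c) algebraic elimination, i.e. projection away from auxiliary coordinates; (d) passage to the Zariski closure of a constructible set, including the fibrewise Zariski closures appearing in the definition of $\Pi_T$; (e) intersection with fixed $S$--dependent algebraic sets; and (f) decomposition into irreducible components followed by selection according to dimension, which is where the weakly special defect $d$ is read off. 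Steps (a), (e) and (f) are patently effective and, by Bezout, cost at most a bounded factor in the degrees (using the graph trick to reduce the maps in (a) to linear projections); the substance is in controlling (b), (c) and (d).

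For (b), a single covariant differentiation along $\nabla$ raises degrees by at most a constant $r_S$ depending only on the connection data, and --- crucially --- the number of differentiations needed before the relevant differential ideal stabilizes is bounded purely in terms of $\dim S$ and the Chow degree of the subvarieties parametrized by $\Omega$, hence again only in terms of $S$. This is exactly where the effective Bezout-type estimates from differential algebraic geometry alluded to in the introduction are used: they guarantee that after boundedly many derivations one recovers the genuine invariant locus rather than an approximation to it. For (c) and (d), the Zariski closure of a linear projection of a variety of degree $D$ has degree at most $D$, and more generally the closure of a constructible subset of an affine space of fixed dimension $n$ defined by equations and inequations of degree $\le D$ has degree bounded by a polynomial in $D$ whose exponent depends only on $n$; since $n$ is fixed by $S$, this is a polynomial bound in $\deg(V)$ with $S$--dependent coefficients, and classical effective elimination theory moreover makes these steps algorithmic with output equations of degree within the same bound. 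Composing the finitely many steps --- the number of which depends only on $S$, since the recipe for $\Pi(d)$ is itself $V$--independent --- each of which sends a polynomial bound in $\deg(V)$ to another such bound, produces a single polynomial $f_S$ depending only on $S$, and concatenating the explicit outputs of the effective forms of (a)--(f) produces the promised explicit system of equalities and inequations of degree $\le f_S(\deg(V))$.

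The main obstacle will be the interface between (b) and (c): one must prove that the differential-algebraic part of the construction, the part that ``integrates'' $\nabla$ in order to detect algebraicity of leaves --- equivalently, to pick out the Chow points corresponding to pre-weakly special subvarieties --- terminates after a number of differentiations controlled purely by $S$, and that the subsequent elimination leaves all degrees polynomial in $\deg(V)$. This is where the differential Bezout-type bounds do the real work, and it is also the point at which one must verify that every implied constant depends genuinely only on $\nabla$ and on the embedding of $U\times\gG(\CC)\times\Omega$, never on $V$. The remaining ingredients --- degree bounds for elimination and for Zariski closure, the behaviour of degree under decomposition into irreducible components, and the dimension-based selection encoding the defect $d$ --- are standard effective commutative algebra.
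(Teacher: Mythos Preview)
Your high-level plan of auditing the construction is sensible, but the proposal misidentifies both what $\Pi(d)$ is and what the differential-algebraic step is doing, and this leads to a concretely false claim at the crucial point.

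First, $\Pi(d)$ is defined \emph{before} any triple $T$ is introduced: it sits in $P\times\Omega$ and is cut out by the conditions $\delta_1$, $\delta_2$ and~\eqref{eq:Pi-d-cond}, all of which are phrased purely through the leafwise intersection dimension $d(p,Y)=\dim_p(\cL_p\cap\beta^{-1}(Y)\cap\pi_P^{-1}(V))$. The maps $\iota_{P_T},\pi_T,\phi_T$ and the fibrewise Zariski closures you list under (a) and (d) play no role here; they belong to Theorem~\ref{main-const-thm}, not to Theorem~\ref{degbound:intro}. Accordingly, the differentiation step (b) is not used to ``characterize the (pre-)weakly special data'' or to ``detect algebraicity of leaves'': it is used to express the condition $d(p,Y)\ge k$ algebraically in $(p,Y)$. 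The pre-weakly special nature of $g_pY$ only emerges a posteriori, via Lemma~\ref{isweaksp} and weak Ax--Schanuel, and plays no part in the complexity bound.

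Second, and more seriously, your assertion that ``the number of differentiations needed before the relevant differential ideal stabilizes is bounded purely in terms of $\dim S$ and the Chow degree\dots hence again only in terms of $S$'' is incorrect. The mechanism in the paper is not stabilisation of a differential ideal but the Gabrielov--Khovanskii multiplicity estimate (Theorem~\ref{thm:gk-mult}): the condition $\dim_p(\{P=0\}\cap\cL_p)\ge1$ is equivalent to $\mult_pP\rest{\cL_p}\ge\mu$ with $\mu=f_d(\deg(\vxi),\deg(P))$, and one then applies the multiplicity operators $\mo[\mu]$ of Proposition~\ref{prop:mo-basic}. Since $P$ includes the equations of $\pi_P^{-1}(V)$, the required order of differentiation $\mu$ grows with $\deg(V)$, not only with $S$. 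The final bound is polynomial in $\deg(V)$ because $\mu$ is polynomial in $\deg(V)$ and the degrees of the $\mo[\mu]P$ are polynomial in $\mu$ and $\deg(P)$ (Lemma~\ref{lem:mo-complexity}). Your stated reasoning, if it were correct, would yield a bound of the shape $\deg(V)+O_S(1)$, which is stronger than what is claimed and is not what the argument delivers.
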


According to Theorem~\ref{thm:main} the sets $\Pi(d)$ provide a
complete parametrization for all families of weakly optimal
subvarieties of $V$ of weakly special defect $d$. Theorem~\ref{degbound:intro} therefore
provides a method for explicitly computing these families and
controlling their degrees. From this, we obtain degree bounds on individual weakly optimal subvarieties.

\begin{teo}\label{degreebound:intro}
  Let $d\in\NN$ and let $W$ be a weakly optimal subvariety of $V$ of weakly special defect $d$. Then 
  \begin{align*}
\deg(W)\leq f_S(\deg(V))
\end{align*}
for some polynomial $f_S$ depending only on $S$.
\end{teo}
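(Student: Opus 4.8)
The plan is to deduce Theorem~\ref{degreebound:intro} directly from Theorems~\ref{thm:main} and~\ref{degbound:intro}, so that essentially no new geometric input is required: the content is a quantitative book-keeping of the chain of constructions relating $\Pi(d)$ to the individual weakly optimal subvarieties. First I would recall from Theorem~\ref{thm:main}(ii) that if $W$ is a weakly optimal subvariety of $V$ of weakly special defect $d$, then there is an irreducible component $\Pi(d)^\circ$ of $\Pi(d)$, with associated triple $T=(X_\gH,X_1\times X_2)$, such that $W$ is the image in $S$ of an irreducible component of a fiber of $\phi_T\rest{\Pi(d)^\circ_T}$. By Theorem~\ref{degbound:intro} the complexity of $\Pi(d)$ — hence of each of its components $\Pi(d)^\circ$ — is bounded by $f_S(\deg(V))$ for a polynomial $f_S$ depending only on $S$; in particular the number of components of $\Pi(d)$ is so bounded as well, so it suffices to bound $\deg(W)$ for each component separately.

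The core of the argument is then to propagate this bound through the diagram
\begin{center}
\begin{tikzcd}
    & P_T\times\Omega_T \arrow[d, "\pi_T"] \arrow[r, "\iota_{P_T}"] & P\times\Omega \arrow[d] \\
S_2 & S_T \arrow[l, "\phi_T"'] \arrow[r]                            & S.
\end{tikzcd}
\end{center}
Concretely: (1) the preimage $\iota_{P_T}^{-1}(\Pi(d)^\circ)$ has degree bounded polynomially in that of $\Pi(d)^\circ$, since $\iota_{P_T}$ is a fixed algebraic morphism depending only on $S$ and $T$, and $T$ ranges over a fixed finite set attached to $S$; (2) its image under the fixed morphism $\pi_T$ again has polynomially bounded degree, by a standard effective-elimination estimate (projection of a constructible set multiplies degree by a bounded amount, the bound depending only on the ambient dimensions, i.e. only on $S$); (3) passing to the union $\Pi(d)^\circ_T$ of Zariski closures of fibers of $\phi_T$ restricted to $\pi_T(\iota_{P_T}^{-1}(\Pi(d)^\circ))$ is, by its very definition in the excerpt, a constructible operation (Theorem~\ref{thm:main}(i)), and effectively so: the fiber dimension stratification and the corresponding closures are cut out by equations of degree bounded polynomially in the input degree, by the quantitative constructibility results underlying Theorem~\ref{degbound:intro}; (4) an individual fiber of $\phi_T\rest{\Pi(d)^\circ_T}$ is the intersection of $\Pi(d)^\circ_T$ with a fiber of the fixed map $\phi_T$, so its degree, and that of each of its irreducible components, is bounded by a product of $\deg(\Pi(d)^\circ_T)$ and a constant depending only on $\phi_T$, hence only on $S$; (5) finally, taking the image in $S$ is again a fixed morphism (the projection $S_T\to S$ composed with nothing new), and the image of a subvariety of bounded degree has bounded degree — here one must be slightly careful, since the image need not be closed, but its Zariski closure has controlled degree by the same projection estimate, and $W$, being a subvariety, is that closure. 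Composing the bounds from (1)–(5), each of which is polynomial with coefficients depending only on $S$ and on the fixed finite set of triples $T$, yields a single polynomial $f_S$ (after enlarging the $f_S$ of Theorem~\ref{degbound:intro} if necessary) with $\deg(W)\le f_S(\deg(V))$.

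I would also record the one point requiring a little care, namely that all degree comparisons must be taken with respect to compatible coordinate systems. The degree of $\Pi(d)$ in Theorem~\ref{degbound:intro} is measured in the Segre product of the three fixed coordinate systems on $U\times\gG(\CC)\times\Omega$; the degrees of $W$ and $V$ in Theorem~\ref{degreebound:intro} are (projective) degrees of subvarieties of $S$ with respect to a fixed very ample power of the Baily--Borel bundle. Pulling back and pushing forward along the morphisms $\iota_{P_T}$, $\pi_T$, $\phi_T$, and $S_T\to S$ changes these polarizations only by fixed bounded factors, since all these morphisms and the polarizations involved are part of the fixed data attached to $S$ (and to the finitely many $T$'s). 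So the translation between the two notions of degree costs only a further fixed polynomial, which is absorbed into $f_S$.

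The main obstacle here is not conceptual but is the fidelity of the effective book-keeping in step (3): one needs the operation ``pass to the union of Zariski closures of fibers of $\phi_T$'' to be not merely constructible (which is Theorem~\ref{thm:main}(i)) but \emph{effectively} constructible with polynomial degree control. This should follow from the same quantitative elimination / stratification machinery that proves Theorem~\ref{degbound:intro} — indeed $\Pi(d)^\circ_T$ is built from $\Pi(d)^\circ$ by exactly the kind of projections, fiber-dimension strata, and Zariski closures whose complexity Theorem~\ref{degbound:intro} already controls — but it must be spelled out that the degree bound there is uniform over the finitely many triples $T$ and over the choice of component $\Pi(d)^\circ$. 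Granting that, the proof is a short composition of standard effective-geometry estimates.
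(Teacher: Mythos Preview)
Your overall strategy matches the paper's, but there is a genuine gap, and it is not the one you flag in step~(3). The claim that ``$T$ ranges over a fixed finite set attached to $S$'' is false: the triple $T=(X_\gH,X_1\times X_2)$ is associated to the component $\Pi(d)^\circ$, which depends on $V$, and as $V$ varies the sub-Shimura datum $(\gH,X_\gH)$---hence $\Gamma_\gH=\Gamma\cap\gH(\QQ)_+$, $S_T$, $\iota_{P_T}$, and $\phi_T$---ranges over an infinite set (only the $\gG(\RR)$-conjugacy class of $X_\gH$ is constrained to a finite list; cf.\ Remark~\ref{Dbound}). Consequently the ``constant depending only on $\phi_T$, hence only on $S$'' in step~(4), which for a B\'ezout argument would be $\deg_{L_{\Gamma_\gH}}(\phi_T^{-1}(z_2))$, is \emph{not} bounded in terms of $S$ alone, and neither are the implicit constants in steps~(1) and~(5). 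Uniformity in $T$ is not a matter of book-keeping; it is the whole difficulty.

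The paper supplies two $T$-uniform ingredients with no elementary substitute. First, the degree comparison along the finite Shimura morphism $\iota_T:S_T\to S$ costs only the factor $r_\gG^3$ depending on $\gG$ alone (Lemma~\ref{obs}(ii),(iii), citing \cite{DJK}, \cite{KY:AO}); this is a special feature of Baily--Borel line bundles. Second, and more substantially, the paper proves $\deg_{L_{\Gamma_\gH}}(W_T)\le\deg_{L_{\Gamma_\gH}}(V_{T,0})$ with no extraneous factor (Lemma~\ref{ineq}), where $V_{T,0}$ is the \emph{global} Zariski closure of $\Pi(d)_{S_T}$---the paper verifies separately (Lemma~\ref{obs0}) that $W_T$ remains a fiber-component of $V_{T,0}$, thereby bypassing your step~(3) altogether. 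The proof of Lemma~\ref{ineq} uses the splitting $L_{\Gamma_\gH}=\phi_1^*L_{\Gamma_1}\otimes\phi_2^*L_{\Gamma_2}$ to expand $c_1(L_{\Gamma_\gH})^{\dim V_{T,0}}\cap[V_{T,0}]$ binomially, reads off the generic-fiber degree via flatness over an open subset of the base, and reaches special fibers by a curve-and-normalization limit. This is where the content lies, and a generic B\'ezout estimate does not deliver it.
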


\subsection{Fully effective computation}
\label{sec:intro-fully-effective}

Even though our main construction is effective, to carry out this
procedure in practice one would need to obtain
\begin{itemize}
\item[(1)] an explicit system of equations for the projective embedding of
  $S$ with respect to (some power of the) Baily-Borel line bundle;
\item[(2)] an explicit description of the canonical connection $\nabla$
  with respect to a prescribed trivialization of the corresponding
  bundle $P$;
\item[(3)] an explicit system of equations for the subvariety $V$ with
  respect to the projective embedding.
\end{itemize}

Computing the first two of these items for a given Shimura variety $S$
appears to be a non-trivial task. Moreover, computing equations for
some subvariety $V$ of interest is by itself a non-trivial
problem. For example, consider the case $S=\mathcal{A}_g$, the space of
principally polarized abelian varieties, and $V$, the closure $\mathcal{T}_g$ of the
open Torelli locus
$\mathcal{T}^\circ_g\subset\mathcal{A}_g$. Computing the
weakly special locus is a problem of significant interest
due to its relation to the Coleman--Oort conjecture on the finiteness of the set of isomorphism
classes of genus--$g$ Jacobians with complex multiplication. We refer to \cite{Moonen2013} for an excellent survey and simply recall here that it is conjectured that for $g\gg1$ the set of positive-dimensional special subvarieties of $\mathcal{T}_g$ intersecting $\mathcal{T}^\circ_g$ is empty. In combination with the Andr\'e-Oort conjecture for
$\mathcal{A}_g$ (established by Tsimerman in \cite{tsimerman:AO}), this would imply
the Coleman-Oort conjecture. Note,
however, that computing a set of equations for $V$ in this case is the
famous Schottky problem, a subject of substantial independent study, still not fully resolved.

Fortunately, taking advantage of the functoriality of the canonical
connections on Shimura varieties, we are able to alternatively carry
out the computation for $V\subset S$ described via a moduli
interpretation instead of an explicit projective embedding. We focus
our attention on the case $S=\mathcal{A}_g$. In light of the moduli
interpretation of $\mathcal{A}_g$, it is natural to describe a
subvariety $V\subset\mathcal{A}_g$ as a family of genus--$g$
principally polarized abelian varieties, or the Jacobians of a family
of genus $g$ curves. In this case, the computation of the canonical
connection translates into the computation of the Gauss--Manin
connection for the corresponding family. This is a classical problem
and it is well known, going back to the work of Manin
\cite{manin:kernel}, that it can be carried our explicitly.

We will focus here on the case of families of Jacobians, which lends
itself more readily to effective computation, as curves are relatively
simple to describe using explicit equations. This is also the case
required in principle to treat the Coleman--Oort conjecture and related
problems on the Torelli locus. Our methods could in principle also be
used to carry out explicit computations with more general constructions,
such as Prym varieties, or even with general families of polarized
abelian varieties presented by explicit equations. However, since it
is far less common to present general abelian varieties in this way, we
do not pursue this matter explicitly.

Let $V$ denote an algebraic variety, which we assume for simplicity to
be smooth. Let $T\to V$ denote a smooth curve over $V$, by which we
mean that $T$ is smooth and the map $T\to V$ is submersive.

To fit our general formalism, we should work with a neat subgroup
$\Gamma\subset\gGSp_{2g}(\ZZ)$. We therefore denote by
$f:\tilde V\to V$ an \'etale cover, $\tilde T\to\tilde V$ the
base change of $T$ by $f$, and choose $f$ such that $\tilde T$ is
compatible with an $N$-level structure (say for $N=3$). We denote by
$\iota:\tilde V\to\cA_{g,N}$ the corresponding moduli map.

\begin{teo}\label{thm:effective-computation}
  Given an explicit system of equations for (a projective embedding
  of) the family $T\to V$, one can explicitly construct an affine
  cover $\{V_\alpha\subset V\}$, such that for each $V'=V_\alpha$:
  \begin{enumerate}
  \item $\iota^*P\rest{f^{-1}(V')}\simeq V'\times\gGSp(\CC)$,
  \item for each $d$, one can explicitly construct a system of
    algebraic equations and inequations for sets
    $\Pi'(d)\subset V'\times\gGSp(\CC)$ such that
    $f^*\Pi'(d)=\iota^*\Pi(d)\rest{f^{-1}(V')}$.
  \end{enumerate}
\end{teo}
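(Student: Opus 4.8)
The plan is to exploit the functoriality of the canonical connection. Let $\cH=H^1_{\mathrm{dR}}(T/V)$ be the relative algebraic de Rham cohomology of the family $T\to V$, a rank-$2g$ vector bundle carrying the Gauss--Manin connection $\nabla^{\mathrm{GM}}$, the Hodge sub-bundle $F^1\subset\cH$, and the symplectic pairing $\langle\,,\,\rangle$ coming from the principal polarizations of the Jacobians. The first, and main, step is to identify $(\iota^*P,\iota^*\nabla)$ with the $\gGSp_{2g}$-torsor of symplectic-similitude frames of $(\cH,\langle\,,\,\rangle)$ equipped with $\nabla^{\mathrm{GM}}$, and to track under this identification how a point of $\iota^*P$ trivializes the fibre of the compact dual $\check X$ (the Lagrangian Grassmannian), so that the Chow coordinates on $\Omega$ --- which record the shape of a weakly special subvariety inside $\check X$ --- retain their meaning after pullback. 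Since this description uses no level structure it is already defined over $V$ (the de Rham bundle of $\tilde T\to\tilde V$ is the $f$-pullback of that of $T\to V$); this is what permits all the objects below to be written over opens of $V$, and makes the identity $f^*\Pi'(d)=\iota^*\Pi(d)\rest{f^{-1}(V')}$ automatic once $\Pi'(d)$ has been defined. Consequently the defining data of $\Pi(d)$ in Theorem~\ref{degbound:intro} --- the projective equations of $U\times\gG(\CC)\times\Omega$, the connection $\nabla$ in coordinates, and the equations of $V$ --- get replaced, respectively, by the de Rham frame description together with the Plücker/Chow equations of $\Omega$, by $\nabla^{\mathrm{GM}}$, and by the data of $\iota$ as encoded in the equations of $T\to V$.

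Granting this, construct the cover as follows. Using the explicit equations of $T\to V$, compute over a suitable affine cover $\{V_\alpha\}$ of $V$ an explicit free basis of $\cH\rest{V_\alpha}$: differentials of the first kind spanning $F^1\rest{V_\alpha}$, completed by differentials of the second kind, all written as explicit rational differentials on the family. After shrinking the $V_\alpha$ one arranges that these forms are everywhere a basis and that the Gram matrix of $\langle\,,\,\rangle$ in this basis is an explicit invertible matrix of rational functions on $V_\alpha$. An explicit symplectic Gram--Schmidt process --- valid after a further localization to the locus where the relevant principal minors are invertible, and harmless since $\gGSp_{2g}$ is a special group so its torsors are Zariski-locally trivial --- then produces an explicit symplectic-similitude frame, i.e. an explicit section over $V_\alpha$ of the de Rham frame bundle, which (after $f$-pullback) is the trivialization $\iota^*P\rest{f^{-1}(V_\alpha)}\simeq V_\alpha\times\gGSp(\CC)$ asserted in part (1).

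With this frame fixed, compute the connection matrix of $\nabla^{\mathrm{GM}}$ in it by the classical algorithm of Manin \cite{manin:kernel}: differentiate each basis form, reduce the result modulo exact forms using the relations coming from the equations of $T\to V$, read off the resulting matrix of rational $1$-forms on $V_\alpha$, and conjugate by the Gram--Schmidt change of basis. Substituting this matrix, the equations of $T\to V$ (in place of the equations of $V$), and the Chow equations of $\Omega$ into the explicit system produced by Theorem~\ref{degbound:intro}, and pulling the result back through $(V_\alpha\times\gGSp(\CC))\times\Omega\to\iota^*P\times\Omega\to P\times\Omega$, gives an explicit system of equalities and inequations cutting out a set $\Pi'(d)\subset V_\alpha\times\gGSp(\CC)$ (the Chow factor $\Omega$, unaffected by $\iota$, being carried along or eliminated exactly as in Theorem~\ref{degbound:intro}). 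By construction $f^*\Pi'(d)=\iota^*\Pi(d)\rest{f^{-1}(V_\alpha)}$, which is part (2).

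The hard part is the first step: making the identification of $(\iota^*P,\iota^*\nabla)$ with the de Rham frame bundle and its Gauss--Manin connection precise, and, crucially, in a form compatible with the coordinate systems of Theorem~\ref{degbound:intro}, so that the system of equations and inequations found there translates verbatim into de Rham data --- in particular so that a point of $\iota^*P$ induces the trivialization of the compact-dual fibre under which the Chow coordinates of $\Omega$ keep their geometric meaning. Once this dictionary is in place, every remaining ingredient --- computing de Rham bases, the polarization pairing, a symplectic frame, the Gauss--Manin matrix, and the substitution into the system of Theorem~\ref{degbound:intro} --- is classical and algorithmic, and the only extra care needed is bookkeeping the affine cover and the auxiliary localizations introduced along the way.
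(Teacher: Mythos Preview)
Your sketch is correct and follows essentially the same route as the paper: trivialize $H^1_{\mathrm{dR}}(T/V)$ over an affine cover by explicit first- and second-kind differentials, normalize to a symplectic frame, compute the Gauss--Manin matrix \`a la Manin, and then rerun the construction of Proposition~\ref{loc-closed} in these coordinates. The ``hard part'' you flag is exactly the paper's Proposition~\ref{prop:connection-compare}, which establishes $f^*(P,\nabla)\simeq\iota^*(\cP,\nabla_0)$ by comparing monodromy representations; the paper is also somewhat more explicit than your sketch about computing the symplectic pairing on second-kind differentials via the bilinear reciprocity formula (and about the $2\pi i$ normalization that forces one to work with $\gGSp$ rather than $\gSp$).
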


Theorem~\ref{thm:effective-computation} in principle provides an
effective method for deciding the question of whether weakly special
subvarieties exist within the open Torelli locus
$\mathcal{T}^\circ_g$ for any given $g$. One first has to explicitly
write down equations for a family $T\to V$ parametrizing all genus $g$
curves. One then computes explicitly the equations for the defect-zero
set $\iota^*\Pi(0)$ corresponding to families of weakly special
subvarieties contained in $V$. Finally, one may use effective
commutative algebra methods (for instance, Gr\"obner basis algorithms)
to determine whether the family is empty.

\begin{rem}
  In the particular case of checking for the existence of
  weakly-special subvarieties, the smoothness assumption on $V$ can be
  dropped. One can first effectively construct a smooth open dense
  subset $V'\subset V$ and apply the preceding process to $V'$, and
  then proceed by induction on dimension with $V\setminus V'$.
\end{rem}

One may apply a similar process to look for weakly special or
weakly optimal subvarieties of other families of interest, for
instance, the family of (Jacobians of) hyperelliptic curves of a given
genus $g$.  We stress that, to perform this computation using
Theorem~\ref{degbound:intro}, one would need to obtain an explicit
description of the hyperelliptic locus $V\subset\mathcal{A}_g$, which
is a non-trivial problem.

\subsection{Literature review}
Effective results on the Andr\'e--Oort and the Zilber--Pink conjectures are still relatively sparse. This work improves upon a previous work of the second author with Javanpeykar and K\"uhne \cite{DJK} which, by entirely different methods, gave effective degree bounds for so-called {\it non-facteur} maximal special subvarieties. We refer to the introduction of the latter for references to several earlier works. More recently, the first author has obtained effective results on the Andr\'e--Oort conjecture for products of modular curves \cite{Binyamini:AOYn} and, with Masser, for Hilbert modular varieties \cite{BM:AOHilb}. Very recently, Pila and Scanlon have announced effective results for function field versions of the Zilber-Pink conjectures for varieties supporting a variation of Hodge structures, also using differential algebraic methods \cite{PS:diffZP}.

\subsection{Acknowledgements}

G. Binyamini was supported by the ISRAEL SCIENCE FOUNDATION (grant No. 1167/17) and by funding from the European Research Council (ERC) under the European Union's Horizon 2020 research and innovation programme (grant agreement No 802107). C. Daw would like to thank the EPSRC for its support via a New Investigator Award (EP/S029613/1). He would also like to thank the University of Oxford for having him as a Visiting Research Fellow.

\section{Generalities}

\subsection{Analytic and algebraic sets}
For a complex analytic set $X$ and $x\in X$, we write $\dim_xX$ for
the dimension of $X$ at $x$, as defined in \cite{gr:sheaves}. We write
$\dim X$ for the supremum of $\dim_xX$ for all $x\in X$. We recall
that, if $X$ is irreducible, $\dim_xX$ is constant on $X$. If $X$ is
(explicitly) an algebraic variety, then $\dim X$ will refer to its
dimension as an algebraic variety. All algebraic subvarieties are
assumed to be (Zariski) closed, unless stated otherwise.

\subsection{Degrees}\label{degrees}

If $X$ is a complex algebraic variety and $k\in\NN$, we denote by
$A_kX$ the group of $k$--cycles modulo rational equivalence on $X$
(see \cite[Section 1.3]{Fulton1998}). We define the degree
$\deg(\alpha)$ of $\alpha\in A_kX$ as in \cite[Definition
1.4]{Fulton1998}. In particular, $\deg(\alpha)=0$ if $k>0$. If
$\alpha\in A_kX$ and $L$ is a line bundle on $X$, we obtain, for any
positive integer $d\leq k$, a class
\begin{align*}
  c_1(L)^d\cap\alpha\in A_{k-d}X
\end{align*}
(see \cite[Definition 2.5]{Fulton1998}). If $V$ is an irreducible
subvariety of $X$, we define the degree $\deg_L(V)$ of $V$ with
respect to $L$ to be the degree of
\begin{align*}
  c_1(L)^{\dim V}\cap[V]\in A_0X,
\end{align*}
where $[V]\in A_{\dim V}X$ denotes the class of the $\dim V$--cycle
given by $V$. If $V$ is a (not necessarily irreducible) subvariety of
$X$, we define the $\deg_L(V)$ to be the sum of the $\deg_L(V_i)$, as
$V_i$ varies over the irreducible components of $V$.

\subsection{Algebraic groups}

For an algebraic group $\gG$, we denote by $\gG^\circ$ the connected
component (in the Zariski topology) of $\gG$ containing the identity,
and we denote by the corresponding mathfrak letter $\mathfrak{g}$ its
Lie algebra.

We include connected in our definitions of reductive and semisimple
algebraic groups. For a reductive algebraic group $\gG$, we denote by
$\gG^\ad$ the quotient of $\gG$ by its center $\gZ(\gG)$, and we
denote by $\gG^\der$ the derived subgroup of $\gG$. If $\gG$ is
defined over (a field containing) $\RR$, we denote the connected
component (in the archimedean topology) $\gG(\RR)^+$ of $\gG(\RR)$
containing the identity by the Roman letter $G$, retaining any
superscripts or subscripts, and we write $\gG(\RR)_+$ for the inverse
image of $G^\ad$ under the natural map
$\gG(\RR)\rightarrow\gG^\ad(\RR)$. We write $\gG(\QQ)_+$ for
$\gG(\RR)_+\cap\gG(\QQ)$.

If $\gG$ is a reductive algebraic group over a field of characteristic
zero, and $\gH$ is a reductive algebraic subgroup of $\gG$, then we
write $\gN_\gG(\gH)$ (resp. $\gZ_\gG(\gH)$) for the normalizer
(resp. the centralizer) of $\gH$ in $\gG$. We recall that
$\gN_\gG(\gH)^\circ$ and $\gZ_\gG(\gH)^\circ$ are both reductive. We
have an almost direct product decomposition
$\gN_\gG(\gH)^\circ=\gH\cdot\gZ_\gG(\gH)^\circ$.

\subsection{Arithmetic groups}

Let $\gG$ denote a reductive algebraic group over $\QQ$ and, via a
faithful representation, consider $\gG$ as a subgroup of $\gGL_n$, for
some $n\in\NN$. The definitions that follow are independent of this
representation and, hence, we can and do make use of them without
reference to such a representation.

An arithmetic subgroup of $\gG(\QQ)$ is any subgroup commensurable
with $\gG(\QQ)\cap\gGL_n(\ZZ)$. An element of $\gG(\QQ)$ is neat if
the subgroup of $\bar{\QQ}$ generated by its eigenvalues (considering
it as an element of $\gGL_n(\bar{\QQ})$) is torsion free. A subgroup
of $\gG(\QQ)$ is neat if all of its elements are neat. In particular,
a neat subgroup is torsion free.

\section{Shimura varieties and the Zilber--Pink conjecture}

\subsection{Shimura data}\label{SD}

Let $\SSS$ denote the Deligne torus, that is, the Weil restriction
from $\CC$ to $\RR$ of $\GG_m$. By a Shimura datum, we refer to a pair
$(\gG,\gX)$, where $\gG$ is a reductive algebraic group defined over
$\QQ$ and $\gX$ is a $\gG(\RR)$--conjugacy class of morphisms
$\SSS\rightarrow\gG_\RR$ such that the conditions SV1, SV2, and SV3 of
\cite[p50]{milne:intro} hold. Furthermore, we impose the condition
\begin{itemize}
\item[(SV0)] $\gG$ is the generic Mumford--Tate group on $\gX$.
\end{itemize}

Condition SV0 means that $\gG$ is the smallest algebraic subgroup
$\gH$ of $\gG$ defined over $\QQ$ such that $x(\SSS)$ is contained in
$\gH_\RR$ for all $x\in\gX$. We recall that $\gX$ is naturally a
disjoint union of hermitian symmetric domains. We refer the reader to
\cite{milne:intro} for more details regarding the theory of Shimura
varieties.

In this article, in order to simplify technical issues, we will assume
that our ambient Shimura datum $(\gG,\gX)$ satisfies the condition
that $\gZ(\gG)(\RR)$ is compact.

\subsection{Shimura varieties}\label{SVs}

Let $(\gG,\gX)$ be a Shimura datum such that $\gZ(\gG)(\RR)$ is
compact and let $X$ be a connected component of $\gX$. As in
\cite{DO:ECM}, we refer to the pair $(\gG,X)$ as a Shimura datum
component. Let $\Gamma$ be an arithmetic subgroup of $\gG(\QQ)$
contained $\gG(\QQ)_+$. Then $\Gamma$ acts on $X$ and, by the theorem
of Baily--Borel \cite{bb:compactification}, the quotient
$S=\Gamma\backslash X$ naturally possesses the structure of an
irreducible quasi-projective complex algebraic variety. Indeed, by
\cite[Lemma 10.8]{bb:compactification}, the line bundle of holomorphic
forms of maximal degree on $X$ descends to an ample line bundle
$L_\Gamma$ on $S$. Note that, if $\Gamma$ is neat, then $S$ is
non-singular (see \cite[Facts 2.3]{pink:published}). We let $k_\Gamma$ denote the smallest integer such that $L^{\otimes k_\Gamma}_\Gamma$ is very ample.

We will refer to the irreducible variety $S$ as the Shimura variety
associated with $(\gG,X)$ and $\Gamma$. We will denote by $\pi$ the
natural complex analytic map $X\rightarrow S$.

\subsection{Special subvarieties and the Zilber--Pink conjecture}\label{SSZP}
Recall the situation described in Section \ref{SVs}. Let
$(\gH,\gX_\gH)$ denote a Shimura subdatum of $(\gG,\gX)$ (in
particular, $\gH$ is the generic Mumford--Tate group on $\gX_\gH$),
and let $X_\gH$ denote a connected component of $\gX_\gH$ contained in
$X$. For any arithmetic subgroup $\Gamma_\gH$ of $\gH(\QQ)$ contained
in $\gH(\QQ)_+$, we obtain a Shimura variety
$S_{\gH}=\Gamma_\gH\backslash X_\gH$ and, when $\Gamma_\gH$ is
contained in $\Gamma$, the natural complex analytic map
$\Gamma_\gH\backslash X_\gH\rightarrow\Gamma\backslash X$ is a finite
(hence, closed) morphism of algebraic varieties (see \cite[Facts
2.6]{pink:published}). We refer to the image of any such map as a
special subvariety of $S$.

It is straightforward to show that the intersection of two special
subvarieties is a finite union of special subvarieties. In particular,
for any subvariety $W$ of $S$, there exists a smallest special
subvariety $\langle W\rangle$ of $S$ containing $W$. In light of this,
we define the defect $\delta(W)$ of $W$ by
\begin{align*}
  \delta(W)=\dim\langle W\rangle-\dim W.
\end{align*}

Now fix a subvariety $V$ of $S$. We define an irreducible subvariety
$W$ of $V$ to be optimal in $V$ if, whenever $W\subsetneq Y$ for some
other irreducible subvariety $Y$ of $V$, we have
$\delta(W)<\delta(Y)$. (In particular, $V$ is an optimal subvariety of
$V$ itself.)

Note that an optimal subvariety $W$ of $V$ such that $\delta(W)=0$ is
a maximal special subvariety of $V$. Observe also that an optimal
subvariety $W$ of $V$ is necessarily an irreducible component of
$\langle W\rangle\cap V$.

We denote by $\Opt(V)$ the set of optimal subvarieties of $V$.  The
central problem in the area of {\it unlikely intersections} in Shimura
varieties is (equivalent to) the following (see \cite{DR}).

\begin{conj}[Zilber--Pink]
  Let $V$ be a subvariety of a Shimura variety $S$. Then the set
  $\Opt(V)$ is finite.
\end{conj}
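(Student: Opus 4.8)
This is a form of the Zilber--Pink conjecture, equivalent by \cite[Theorem 12.4]{BD} to Conjecture~\ref{ZP:intro}, and no unconditional proof is known; the plan is to run the Pila--Zannier scheme sketched in \cite{DR}, which reduces the statement to an arithmetic input available only in special cases. The starting point is the geometric side, which is exactly what this article makes effective: since every optimal subvariety of $V$ is weakly optimal, the theorem of Ren and the second author (\cite{DR}, Proposition~6.3), in the refined form of Theorem~\ref{thm:main}, shows that the weakly special closure of any optimal $W$ is the image in $S$ of $X_1\times\{x_2\}$ for one of finitely many triples $T=(X_\gH,X_1\times X_2)$ and some $x_2\in X_2$; indeed the weakly optimal subvarieties of $V$ are the fibers of finitely many constructible families. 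It therefore suffices to fix such a $T$ and show that only finitely many optimal $W$ have weakly special closure arising from $T$. Those $W$ that are themselves weakly special lie in one algebraic family, and finiteness of the optimal ones among them follows from functional-transcendence arguments (Ax--Lindemann) applied to that family; so assume each $W$ is strictly contained in its weakly special closure, hence in $\langle W\rangle$, and note that $W$ is then an irreducible component of $\langle W\rangle\cap V$, so it is enough to bound the complexity of the special subvarieties $\langle W\rangle$ that can occur.

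The next step passes to the uniformization. Working inside a fundamental domain $\mathcal{F}$ for $\Gamma$ on $X$, and using the Chow-type parametrization $\Omega_T$ introduced in this paper, the set of those $x_2$ --- equivalently, of the corresponding points in the parameter space of the Shimura variety $S_T$ --- for which the image of $X_1\times\{x_2\}$ meets $V$ in an optimal component, together with the preimage of $V$, becomes definable in the o-minimal structure $\RR_{\mathrm{an},\exp}$, by the definability of the Shimura uniformization on $\mathcal{F}$ (Peterzil--Starchenko, Klingler--Ullmo--Yafaev). Each special subvariety $\langle W\rangle$ arising from $T$ then corresponds, in suitable coordinates, to a rational point of this definable set, whose height is bounded polynomially in the complexity (a discriminant-type invariant) of $\langle W\rangle$ by an o-minimal Yomdin--Gromov estimate. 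The hyperbolic Ax--Schanuel theorem for Shimura varieties (Mok--Pila--Tsimerman), combined with the finiteness of families, shows that positive-dimensional semialgebraic pieces of this definable set force proper weakly special subvarieties, which by optimality cannot contribute infinitely often; hence the Pila--Wilkie counting theorem leaves all but finitely many of the relevant rational points isolated. Comparing the polynomial upper bound for their heights with a polynomial \emph{lower} bound for the size of the Galois orbit of $\langle W\rangle$ then forces finitely many possibilities for $\langle W\rangle$, and hence finitely many optimal $W$ arising from $T$.

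The main obstacle is exactly this last ingredient: a polynomial lower bound, in terms of the complexity of $\langle W\rangle$, for the number of its Galois conjugates (equivalently, for the Galois orbit of a special point cutting it out). Such bounds generalize the Galois-orbit estimates of Tsimerman and of Ullmo--Yafaev and are known unconditionally only in limited cases --- for example for $S=\cA_g$, via Tsimerman's work and the averaged Colmez formula, or in general under GRH --- so the scheme above is conditional on them. This is precisely why Conjecture~\ref{ZP:intro} remains open, and why the unconditional content of the present paper is confined to the geometric side: the explicit construction and effective degree control of the finitely many families $\Pi(d)$ that parametrize all candidate weakly optimal subvarieties of $V$, rather than the finiteness of $\Opt(V)$ itself.
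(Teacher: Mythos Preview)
The statement is a \emph{conjecture}, and the paper does not prove it; it is presented precisely as the central open problem motivating the work, with the paper's contribution confined to the geometric side (the effective construction of the families $\Pi(d)$). You correctly recognize this and give an accurate outline of the conditional Pila--Zannier strategy from \cite{DR}, including the role of weak hyperbolic Ax--Schanuel, o-minimal point counting, and the missing arithmetic input (polynomial Galois-orbit lower bounds). There is therefore no proof in the paper to compare your proposal against; your acknowledgment that the finiteness of $\Opt(V)$ remains open, and that the present article does not claim it, is exactly right.
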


\subsection{Weakly special and weakly optimal
  subvarieties}\label{wswo}

Recall the situation described in Section \ref{SVs}. Let
$(\gH,\gX_\gH)$ denote a Shimura subdatum of $(\gG,\gX)$ and let
$X_\gH$ denote a connected component of $\gX_\gH$ contained in
$X$. Then the image $S_\gH$ of $X_\gH$ in $S$ is a special
subvariety. Decompose $\gH^\ad$ as a product $\gH_1\times\gH_2$ of two
(permissibly trivial) normal $\QQ$--subgroups. In this way, we obtain
a decomposition $X_\gH=X_1\times X_2$, and we will refer to a
decomposition of this form as a $\QQ$--splitting. For any
$x_2\in X_2$, the image $S_{\gH,x_2}$ of $X_1\times\{x_2\}$ in $S$ is
a closed irreducible algebraic subvariety and we refer to any
subvariety of this form as a weakly special subvariety of $S$. We
remark that any special subvariety is weakly special, whereas
$S_{\gH,x_2}$ is special if and only if the Mumford--Tate group of
$x_2$ is a torus (or, equivalently, if $x_2$ is a pre-special point of
$X_2$, to use another terminology).

It is straightforward to show that the intersection of two weakly
special subvarieties is a finite union of weakly special
subvarieties. In particular, for any subvariety $W$ of $S$, there
exists a smallest weakly special subvariety
$\langle W\rangle_{\rm ws}$ of $S$ containing $W$. In light of this,
we define the weakly special defect $\delta_{\rm ws}(W)$ of $W$ by
\begin{align*}
  \delta_{\rm ws}(W)=\dim\langle W\rangle_{\rm ws}-\dim W.
\end{align*}

Now fix a subvariety $V$ of $S$. We define an irreducible subvariety
$W$ of $V$ to be weakly optimal in $V$ if, whenever $W\subsetneq Y$
for some other irreducible subvariety $Y$ of $V$, we have
$\delta_{\rm ws}(W)<\delta_{\rm ws}(Y)$. (In particular, $V$ is a
weakly optimal subvariety of $V$ itself.)

Note that a weakly optimal subvariety $W$ of $V$ such that
$\delta(W)=0$ is a maximal weakly special subvariety of $V$. Observe
also that a weakly optimal subvariety $W$ of $V$ is necessarily an
irreducible component of $\langle W\rangle_{\rm ws}\cap V$. By
\cite[Corollary 4.5]{DR}, any optimal subvariety of $V$ is weakly
optimal.

\begin{rem}
  Note that any point $z\in S$ is a weakly special subvariety (according to our convention, at least). In
  particular, $\delta_{\rm ws}(z)=0$. Hence, $z\in V$ is a weakly
  optimal subvariety of $V$ if and only if it is not contained in a
  positive dimensional weakly special subvariety contained in $V$.
\end{rem}

\section{The (weak) hyperbolic Ax-Schanuel conjecture}

\subsection{Subvarieties of $X$}\label{subvarX}
Recall the situation described in Section \ref{SD} and let $X$ be an
irreducible component of $\gX$. Recall that $X$ is a $G$--conjugacy
class of homomorphisms $\SSS\rightarrow\gG_\RR$. By extending scalars
to $\CC$ and pre-composing with
$\GG_m\rightarrow\GG^2_m\cong\SSS_\CC$, where the first map is given
by $z\mapsto(z,1)$, we obtain from each point $x\in X$ a cocharacter
$\mu_x:\GG_m\rightarrow\gG_\CC$ such that
\begin{itemize}
\item[$(*)$] in the action of $\GG_m$ on $\Lie(\gG_\CC)$, obtained via
  restriction of the adjoint representation, only the characters $z$,
  $1$, and $z^{-1}$ occur.
\end{itemize}

In this way, we obtain an embedding of $X$ into a
$\gG(\CC)$--conjugacy class $X^{\rm co}$ of cocharacters of $\gG_\CC$
satisfying $(*)$. For each $\mu\in X^{\rm co}$ and $r\in\{1,0,-1\}$,
we define $V^r_\mu$ to be the character subspace of $\Lie(\gG_\CC)$ on
which $\GG_m$ acts (according to the action obtained from $\mu$) via
the character $z^r$. Then $\Lie(\gG_\CC)=\oplus_rV^r_\mu$, and we
obtain a filtration $F_\mu$ of $\Lie(\gG_\CC)$ by setting
$F^p_\mu=\oplus_{r\geq p}V^r_\mu$. In this way, we obtain a
$\gG(\CC)$--invariant surjective map $\mu\mapsto F_\mu$ from
$X^{\rm co}$ to a $\gG(\CC)$--orbit of filtrations $\check{X}$. Note
that $\check{X}$ is a complex projective flag variety known as the
compact dual of $X$. The composite map $X\rightarrow\check{X}$ is a
complex analytic $\gG(\RR)_+$--invariant embedding, known as the Borel
embeddding of $X$, and we identify $X$ with its image, which is an
open subset of $\check{X}$. In particular, $\dim\check{X}=\dim X$. We
define a subvariety of $X$ to be any irreducible analytic component of
$X\cap Y$ for any algebraic subvariety $Y$ of $\check{X}$. As noted in
the paragraph following Theorem 5.4 of \cite{DR}, this definition
agrees with the definition therein.

\subsection{Pre-special and pre-weakly special subvarieties}\label{pre-sp}
Recall the situation described in Section \ref{SD} and let $X$ be an
irreducible component of $\gX$.  If $(\gH,\gX_\gH)$ is a Shimura
subdatum of $(\gG,\gX)$ and $X_\gH$ is a connected component of
$\gX_\gH$ contained in $X$, we obtain a commutative diagram of complex
analytic $\gH(\RR)_+$--invariant embeddings
\begin{align*}
  \xymatrix{
  X_\gH \ar[r] \ar[d]  &\check{X}_\gH \ar[d] \\
  X \ar[r]             &\check{X},
                         }
\end{align*}
and we identify all objects with their images in $\check{X}$.

\begin{lem}\label{intwithcompact}
  We have $X\cap\check{X}_\gH=X_\gH$.
\end{lem}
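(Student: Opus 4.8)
The containment $X_\gH\subseteq X\cap\check X_\gH$ is immediate from the commutative diagram of embeddings, so the plan is to prove the reverse inclusion $X\cap\check X_\gH\subseteq X_\gH$. The key point is to exploit the description of all the spaces in question as orbits of cocharacters (equivalently, of filtrations) from Section \ref{subvarX}: a point $x\in\check X$ corresponds to a filtration $F_x$ of $\Lie(\gG_\CC)$, the subset $X\subset\check X$ consists of those filtrations that are ``polarizable'', i.e. arise from an actual $\RR$-Hodge structure (a point of $\gX$), and $\check X_\gH$ consists of those filtrations that come from filtrations of $\Lie(\gH_\CC)$ under the inclusion $\Lie(\gH_\CC)\hookrightarrow\Lie(\gG_\CC)$. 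So a point $x\in X\cap\check X_\gH$ is a point of $X$ (hence a homomorphism $h_x\colon\SSS\to\gG_\RR$ in the conjugacy class $\gX$, or rather in the component $X$) whose associated Hodge filtration is already defined over $\gH_\CC$.

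The heart of the argument is then: if $x\in X$ and the Hodge filtration $F_x$ on $\Lie(\gG_\CC)$ is induced from a filtration on $\Lie(\gH_\CC)$, then in fact $h_x$ factors through $\gH_\RR$, so $x\in\gX_\gH$, and since $x\in X$ and $X_\gH$ is the connected component of $\gX_\gH$ lying in $X$, we get $x\in X_\gH$. To see the factorization I would argue as follows. The cocharacter $\mu_x$ determines the filtration $F_x$, and conversely, by condition $(*)$, $\mu_x$ is recovered from the Hodge decomposition $\Lie(\gG_\CC)=V^{1}_{\mu_x}\oplus V^0_{\mu_x}\oplus V^{-1}_{\mu_x}$, which in turn is determined by $F_x$ together with its complex conjugate $\bar F_x$ (here one uses that $x$ is a genuine point of $X$, so the Hodge structure is real and $\bar F_x$ makes sense). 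If $F_x$ lies in $\check X_\gH$, i.e. is induced from $\Lie(\gH_\CC)$, then so is $\bar F_x$ (complex conjugation is compatible with the real structure and with the $\QQ$-subgroup $\gH$), and hence the whole Hodge decomposition of $\Lie(\gG_\CC)$ is induced from a decomposition of $\Lie(\gH_\CC)$; equivalently the cocharacter $\mu_x$ factors through $\gH_\CC$, and its Weil restriction datum $h_x$ factors through $\gH_\RR$. Since $\gH$ is the generic Mumford–Tate group on $\gX_\gH$ and $h_x$ takes values in $\gH_\RR$, the point $x$ lies in $\gX_\gH$; being a point of $X$ as well, it lies in the component $X_\gH$, as required. (Alternatively, one can avoid Mumford–Tate considerations entirely: the subvariety $X_\gH\subset\check X$ is, by construction, an orbit $\gH(\RR)_+\cdot x_0$ inside the orbit $\gG(\RR)_+\cdot x_0=X$, and $\check X_\gH=\gH(\CC)\cdot x_0$; the claim $\gG(\RR)_+\cdot x_0\cap\gH(\CC)\cdot x_0=\gH(\RR)_+\cdot x_0$ then follows from the fact that the stabilizer of $x_0$ in $\gG(\CC)$ is a parabolic $\gP$ with $\gP(\CC)\cap\gG(\RR)$ compact, so that $\gG(\RR)/\gG(\RR)\cap\gP(\CC)\hookrightarrow\gG(\CC)/\gP(\CC)$ is the Borel embedding and one reduces to comparing the two Borel embeddings; I would choose whichever of these is cleanest to write.)

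I expect the main obstacle to be purely expository: making precise the statement ``$F_x$ is induced from $\Lie(\gH_\CC)$ implies $\mu_x$ (hence $h_x$) factors through $\gH$'', since this requires keeping careful track of how $\check X_\gH\hookrightarrow\check X$ is defined functorially from the inclusion of Shimura data, and then unwinding that $\check X_\gH$ really is the locus of filtrations of $\Lie(\gG_\CC)$ that restrict to a filtration of $\Lie(\gH_\CC)$ of the right Hodge type. Once that dictionary is in place, the argument is a short orbit computation. I would also remark that this lemma is presumably well known (it is the statement that the Borel embedding is functorial in a strong sense, compatible with intersections), and a one-line citation to the relevant parts of \cite{milne:intro} or \cite{DR} could replace the full argument if desired.
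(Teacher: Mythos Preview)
Your argument correctly shows that a point $x\in X\cap\check X_\gH$ lies in $\gX_\gH$: the filtration is defined over $\Lie(\gH_\CC)$, hence so is its conjugate, hence the cocharacter and then $h_x$ factor through $\gH_\RR$. This is exactly the content of the reference \cite[VI.B.11]{GGK} that the paper invokes in its first sentence.

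The gap is in the next sentence of your plan: ``since $x\in X$ and $X_\gH$ is the connected component of $\gX_\gH$ lying in $X$, we get $x\in X_\gH$.'' The definite article hides the problem. Nothing prevents $\gX_\gH$ from having \emph{several} connected components (i.e.\ several $H=\gH(\RR)^+$--orbits) all lying inside the single $G$--orbit $X$; your argument only shows $x$ lies in \emph{some} component of $\gX_\gH$, not in the chosen one $X_\gH$. Your alternative orbit formulation has the same issue: from $gx_0=hx_0$ with $g\in\gG(\RR)_+$ and $h\in\gH(\CC)$ you get $h^{-1}g\in\gP(\CC)$, but there is no direct mechanism to produce an element of $\gH(\RR)_+$ carrying $x_0$ to $gx_0$; ``comparing the two Borel embeddings'' does not resolve this, since injectivity of each Borel embedding says nothing about how $\gH(\RR)$--components sit inside a $\gG(\RR)$--component.

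This missing step is precisely what the paper's proof is devoted to: given $x_1,x_2\in X\cap\gX_\gH$, it uses Cartan decompositions of $G$ and $H$ relative to the stabilizers $K_i$ of the $x_i$, together with a lemma of Ullmo (\cite[Lemme 3.11]{ullmo:equidistribution}, as in \cite[Lemma 3.7]{uy:andre-oort}), to show that $x_1$ and $x_2$ lie in the same $H$--orbit. You should either reproduce that argument or cite it; the Hodge-theoretic reduction to $x\in\gX_\gH$ is only the first half of the lemma.
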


\begin{proof}

  By \cite[VI.B.11]{GGK}, the intersection is contained in
  $X\cap \gX_\gH$. Therefore, let $x_1,x_2\in X\cap \gX_\gH$ and let
  $X_i$ denote the $H$--orbit of $x_i$ in $X$ (in other words, the
  connected component of $\gX_\gH$ containing $x_i$). Let $K_i$ denote
  the maximal compact subgroup of $G$ stabilizing $x_i$ and let
  $G=P_iK_i$ denote the corresponding Cartan decomposition. We also
  have Cartan decompositions $H=(P_i\cap H)(K_i\cap H)$.

  Writing $x_2=\alpha x_1$ for some $\alpha\in G$, we have
  $K_2=\alpha K_1\alpha^{-1}$ and $P_2=\alpha P_1\alpha^{-1}$. Since
  Cartan decompositions are conjugate, we also have
  $K_2\cap H=h(K_1\cap H)h^{-1}$ and $P_2\cap H=h(P_1\cap H)h^{-1}$
  for some $h\in H$.

  We set $\gamma=h^{-1}\alpha$ and write $\gamma=pk$ for some
  $p\in P_1$ and some $k\in K_1$. We deduce that
  $K_1\cap H=pK_1p^{-1}\cap H$ and (trivially)
  $P_1\cap H=pP_1p^{-1}\cap H$. Using \cite[Lemme
  3.11]{ullmo:equidistribution}, as in the proof of \cite[Lemma
  3.7]{uy:andre-oort}, we deduce that $p^2$ centralizes $H$. Since $H$
  is Zariski dense in $\gH_\RR$, it follows that
  $p^2\in\gZ_\gG(\gH)(\RR)$. Therefore, since $p^2\in G$, we conclude
  that $p^2\in K_1$ and so $p^2$ is trivial. It follows that $p$ is
  fixed by the Cartan involution associated with $K_1$ and so
  $p=1$. We conclude that $x_2=hx_1$ and so $x_2\in X_1$, which
  finishes the proof.
\end{proof}

In particular, $X_\gH$ is a subvariety of $X$, and we call such a
subvariety a pre-special subvariety. A similar discussion shows that,
for a $\QQ$--splitting $X_\gH=X_1\times X_2$ as above and a point
$x_2\in X_2$, the set $X_{\gH,x_2}=X_1\times\{x_2\}$ is again a
subvariety of $X$ (we define $\check{X}_{\gH,x_2}$ analogously), and
we refer to such a subvariety as a pre-weakly special subvariety of
$X$.

\begin{rem}\label{Dbound}
  It is an easy consequence of \cite[Lemma 3.7]{uy:andre-oort} and the
  fact that $\gG_\RR$ possesses only finitely many
  $\gG(\RR)$--conjugacy classes of semisimple subgroups that the
  pre-weakly special subvarieties of $X$ belong to finitely many
  $\gG(\RR)$--orbits. It follows that, for a given embedding of
  $\check{X}$ into projective space, there exists a $D\in\NN$ such
  that, for any pre-weakly special subvariety $X_{\gH,x_2}$ of $X$,
  the degree of $\check{X}_{\gH,x_2}$ is at most $D$.
\end{rem}

\subsection{Intersection components}
Recall the situation described in Section \ref{SVs} and let $V$ be an
irreducible subvariety of $S$. We define an intersection component of
$\pi^{-1}(V)$ to be an irreducible analytic component of the
intersection of $\pi^{-1}(V)$ with a subvariety of $X$. For any
intersection component $A$ of $\pi^{-1}(V)$, there exists a smallest
subvariety $\langle A\rangle_{\rm Zar}$ of $X$ containing $A$ (from
which it follows that $A$ is automatically an irreducible analytic
component of $\langle A\rangle_{\rm Zar}\cap\pi^{-1}(V)$). In light of
this, we define the Zariski defect $\delta_{\rm Zar}(A)$ of $A$ by
\begin{align*}
  \delta_{\rm Zar}(A)=\dim\langle A\rangle_{\rm Zar}-\dim A.
\end{align*}
We say that $A$ is Zariski optimal in $\pi^{-1}(V)$ if, whenever
$A\subsetneq B$ for some other intersection component $B$ of
$\pi^{-1}(V)$, we have $\delta_{\rm Zar}(A)<\delta_{\rm Zar}(B)$. The
weak hyperbolic Ax--Schanuel conjecture, which follows (see
\cite[Lemma 5.16]{DR}) from the hyperbolic Ax--Schanuel , proven by
Mok--Pila--Tsimerman \cite{MPT:AS}, is the following.

\begin{teo}[weak hyperbolic Ax--Schanuel]\label{WAS}
  Let $A$ be a Zariski optimal intersection component of
  $\pi^{-1}(V)$. Then $\langle A\rangle_{\rm Zar}$ is pre-weakly
  special.
\end{teo}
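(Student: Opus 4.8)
The plan is to deduce the weak hyperbolic Ax--Schanuel conjecture (Theorem \ref{WAS}) as a formal consequence of the hyperbolic Ax--Schanuel theorem of Mok--Pila--Tsimerman, following \cite[Lemma 5.16]{DR}. I first recall what hyperbolic Ax--Schanuel gives: for the uniformization $\pi\colon X\to S$, if $W$ is an irreducible component of $D\cap \pi^{-1}(V)$ for some algebraic $D\subset\check X$ and $V\subset S$ algebraic, and if $W$ is \emph{atypical} in the sense that
\begin{align*}
\dim W > \dim D + \dim \pi^{-1}(V) - \dim X,
\end{align*}
then $W$ is contained in a proper pre-weakly special subvariety of $X$ (equivalently, the projection of $W$ to $S$ lies in a proper weakly special subvariety). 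The goal is to upgrade this ``contained in a weakly special'' conclusion to the statement that, for a Zariski optimal $A$, the set $\langle A\rangle_{\rm Zar}$ \emph{is itself} pre-weakly special.

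The key step is a defect-comparison argument. Let $A$ be Zariski optimal in $\pi^{-1}(V)$ and set $B=\langle A\rangle_{\rm Zar}$. If $B$ is not pre-weakly special, I want to produce an intersection component $A'\supsetneq A$ with $\delta_{\rm Zar}(A')\le\delta_{\rm Zar}(A)$, contradicting optimality. To do this I would consider the smallest pre-weakly special subvariety $\langle A\rangle_{\rm ws}$ of $X$ containing $A$ (this exists by the analogue for $X$ of the intersection stability of weakly special subvarieties). Then $A$ is contained in $\langle A\rangle_{\rm ws}\cap\pi^{-1}(V)$; let $A'$ be an irreducible component of this intersection containing $A$, so that $A'$ is an intersection component of $\pi^{-1}(V)$ with $A\subseteq A'$. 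The heart of the matter is to show that if $B$ fails to be weakly special then in fact $A\subsetneq A'$ and moreover $\langle A'\rangle_{\rm Zar}$ is strictly smaller than one would naively expect, forcing $\delta_{\rm Zar}(A')\le\delta_{\rm Zar}(A)$. Concretely, one runs hyperbolic Ax--Schanuel applied to $B=\langle A\rangle_{\rm Zar}$ inside $\langle A\rangle_{\rm ws}$ (whose compact dual $\check X_{\langle A\rangle_{\rm ws}}$ intersects $X$ exactly in $\langle A\rangle_{\rm ws}$ by Lemma \ref{intwithcompact}): since $A$ is a component of $B\cap\pi^{-1}(V)$, if $A$ were typical with respect to $B$ inside the weakly special ambient, a dimension count would show $B$ must already be weakly special; if $A$ is atypical, Ax--Schanuel places $A$ inside a proper pre-weakly special subvariety of $X$, which by minimality of $\langle A\rangle_{\rm ws}$ must contain $\langle A\rangle_{\rm ws}$, a contradiction.

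Let me organize this more carefully. I would argue as follows. Suppose for contradiction that $B:=\langle A\rangle_{\rm Zar}$ is not pre-weakly special. Consider the hyperbolic Ax--Schanuel inequality for $A$ as a component of $B\cap\pi^{-1}(V)$: either $\dim A$ is typical, i.e. $\dim A=\dim B+\dim\pi^{-1}(V)-\dim X$, or $A$ is atypical. In the atypical case, Mok--Pila--Tsimerman gives a proper pre-weakly special $X_{\gH,x_2}\subsetneq X$ containing $A$; but then $X_{\gH,x_2}\cap B$ is a subvariety of $X$ containing $A$, and (since $A$ is Zariski optimal) one checks this forces $\langle A\rangle_{\rm Zar}\subseteq X_{\gH,x_2}$, so one can replace $B$ by a strictly smaller ambient and iterate — this descent must terminate, and it terminates precisely when $B$ is pre-weakly special, because a minimal such $B$ is forced to be typical and then a further application (to the Zariski optimal $A$ sitting typically) shows $B$ is weakly special. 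The precise bookkeeping showing that optimality of $A$ is preserved under each replacement, and that the only obstruction to running Ax--Schanuel again is weak specialness of the ambient, is exactly \cite[Lemma 5.16]{DR}, and I would present it in that form.

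The main obstacle I anticipate is the passage from ``$A$ lies in some proper pre-weakly special $X_{\gH,x_2}$'' to controlling $\langle A\rangle_{\rm Zar}$ itself — i.e. ruling out the possibility that $\langle A\rangle_{\rm Zar}$ is a proper non-weakly-special subvariety that merely happens to contain $A$ but is not reachable by the descent. Handling this requires using Zariski optimality of $A$ twice: once to ensure that any intermediate intersection component strictly containing $A$ has strictly larger Zariski defect, and once more in the base case of the induction to convert a typical-dimension configuration into actual weak specialness via a dimension count in the flag variety $\check X$ together with Lemma \ref{intwithcompact}. Since all the genuinely new input (hyperbolic Ax--Schanuel) is being quoted, and the reduction is combinatorial-geometric, the argument is essentially bookkeeping; I would simply cite \cite[Lemma 5.16]{DR} for the deduction and, if a self-contained proof is wanted, spell out the induction on $\dim\langle A\rangle_{\rm Zar}$ as sketched above.
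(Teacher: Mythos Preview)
The paper does not give its own proof of Theorem~\ref{WAS}: it simply records the statement and cites \cite[Lemma~5.16]{DR} together with \cite{MPT:AS}. Your proposal to cite the same reference therefore matches the paper exactly, and at that level your answer is correct.

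Your attempted self-contained sketch, however, is more tangled than necessary and contains a genuine gap. The dichotomy you set up between the ``typical'' and ``atypical'' cases does not work as stated: in the typical case, the equality $\dim A=\dim B+\dim\pi^{-1}(V)-\dim X$ does \emph{not} by itself force $B=\langle A\rangle_{\rm Zar}$ to be pre-weakly special, and the descent you propose on $\dim\langle A\rangle_{\rm Zar}$ is an unnecessary complication. The standard argument (which is essentially what \cite[Lemma~5.16]{DR} does) is a single step: set $M=\langle A\rangle_{\rm ws}$, the smallest pre-weakly special subvariety of $X$ containing $A$, and let $A'$ be a component of $M\cap\pi^{-1}(V)$ containing $A$. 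Since $A$ lies in no proper pre-weakly special subvariety of $M$, hyperbolic Ax--Schanuel applied inside $M$ gives
\[
\dim A+\dim M\le\dim\langle A\rangle_{\rm Zar}+\dim A',
\]
hence $\delta_{\rm Zar}(A')\le\dim M-\dim A'\le\delta_{\rm Zar}(A)$. Zariski optimality of $A$ forces $A'=A$, and then the displayed inequality collapses to $\dim M\le\dim\langle A\rangle_{\rm Zar}$, so $\langle A\rangle_{\rm Zar}=M$ is pre-weakly special. No induction is needed.
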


\section{Standard principal bundles and canonical foliations}

\subsection{Standard principal bundles}\label{SPBs}

Recall the situation described in Section \ref{SVs} and suppose that
$\Gamma$ is neat. Since $\gZ(\gG)(\RR)$ is compact, the stabilizer in
$\gG(\RR)$ of any point in $X$ is compact. Therefore, since $\Gamma$
is torsion free, it acts without fixed points on $X$. It follows that
$\Gamma$ is (isomorphic to) the fundamental group $\pi_1(S)$ of $S$
and that
\begin{align*}
  P=\Gamma\backslash (X\times\gG(\CC))
\end{align*}
is a principal complex analytic $\gG(\CC)$--bundle over $S$. (The
action of $\Gamma$ on $X\times\gG(\CC)$ is diagonal and on the left,
and the action of $\gG(\CC)$ is given by $h\cdot [x,g]=[x,gh^{-1}]$,
where we use $[x,g]$ to denote the class of $(x,g)\in X\times\gG(\CC)$
in $P$.) Furthermore, there is a canonical flat connection $\nabla$ on
$P$.

Following convention, we refer to $P=(P,\nabla)$ as the standard
principal bundle associated with $(\gG,X)$ and $\Gamma$. By
\cite[Proposition 3.2]{milne:connected}, $P$ is complex algebraic as a
bundle over the algebraic variety $S$. We let $\pi_P:P\rightarrow S$
denote the natural (complex algebraic) morphism $[x,g]\mapsto\pi(x)$.

Note that there is also a natural $\gG(\CC)$--equivariant algebraic
map $\beta:P\rightarrow \check{X}$ defined by
\begin{align*}
  \beta([x,g])=g^{-1}F_{\mu_x}.
\end{align*}
We observe that the composite of $\beta$ with the natural embedding
$X\rightarrow P$ given by $x\mapsto [x,1]$ yields the Borel embedding
of $X$ into $\check{X}$.

\subsection{Trivializations}
Recall the situation described in Section \ref{SPBs}. Let
$p=[x,g]\in P$ and let $U\subset X$ be an open neighbourhood of $x$
such that
\begin{align*}
  \gamma\in\Gamma\text{ and }U\cap\gamma U\neq\emptyset\implies\gamma=1.
\end{align*}

Such a $U$ exists by \cite[Proposition 2.5]{milne:MF} and the fact
that $\Gamma$ is torsion free. It follows immediately that
$\pi_U:U\rightarrow\pi(U)$ is biholomorphic and we obtain a (complex
analytic) trivialization
\begin{align*}
  \varphi_U:\pi(U)\times\gG(\CC)\rightarrow \pi_P^{-1}(\pi(U))
\end{align*}
of $P$ over $\pi(U)$ defined by $(s,g)\mapsto [\pi_U^{-1}(s),g]$.

\subsection{Flat structures}
Recall again the situation described in Section \ref{SPBs}. Choose an
open covering $\mathcal{C}$ of $X$, stable under translation by
$\Gamma$ and such that each point $x\in X$ is contained in an
arbitrarily small $U\in\mathcal{C}$. We claim that we can choose
$\mathcal{C}$ such that
\begin{align}\label{cov}
  U_1,U_2\in\mathcal{C}\text{ and }U_1\cap U_2\neq\emptyset\implies U_1\cap\gamma U_2=\emptyset\text{ for all }\gamma\in\Gamma\setminus\{1\}.
\end{align}
To see this, equip $X$ with its usual metric
$d:X\times X\rightarrow\RR$, and consider
$f:X\times\gG(\RR)_+\rightarrow X\times X$ defined by
$(x,g)\mapsto(x,gx)$. Recall that both $d$ and $f$ are proper maps. In
particular, for a compact interval $I\subset\RR$, the preimage
$(d\circ f)^{-1}(I)$ is a compact subset of $X\times\gG(\RR)_+$ and,
therefore, so too is the projection $\Theta$ to $\gG(\RR)_+$. Since
$\Gamma$ is a discrete subgroup of $\gG(\RR)_+$, we conclude that
$\Gamma\cap\Theta$ is finite. Therefore, since $\Gamma$ acts freely on
$X$, there exists $C>0$ such that
\begin{align*}
  \gamma\neq 1\implies d(x,\gamma x)>C,\text{ for all }x\in X.
\end{align*}
Therefore, in order to define $\mathcal{C}$, choose around each point
$x\in X$ a system of arbitraily small relatively compact open
neighbourhoods $U$ such that
\begin{align*}
  x,y\in U\implies d(x,y)<C/2,
\end{align*}
and such that $\mathcal{C}$ is stable under translation by
$\Gamma$. Now let $U_1,U_2\in\mathcal{C}$ and suppose that
$x_1\in U_1\cap U_2$. Furthermore, suppose that
$x_2\in U_1\cap\gamma U_2$ for some
$\gamma\in\Gamma\setminus\{1\}$. We conclude that
\begin{align*}
  C<d(x_1,\gamma x_1)\leq d(x_1,x_2)+d(x_2,\gamma x_1)<C/2+C/2=C,
\end{align*}
which is a contradiction, yielding the claim.

\

Note that condition (\ref{cov}) with $U_2=U_1=U$ implies that we have
trivializations
\begin{align*}
  \varphi_U:\pi(U)\times\gG(\CC)\rightarrow \pi_P^{-1}(\pi(U))
\end{align*}
as before, for any $U\in\mathcal{C}$. Now suppose that
$\pi(U_1)\cap\pi(U_2)\neq\emptyset$ for $U_1,U_2\in\mathcal{C}$. We
obtain a transition map
\begin{align*}
  \varphi_{U_2}^{-1}\circ\varphi_{U_1}:\pi(U_1)\cap\pi(U_2)\times\gG(\CC)\rightarrow \pi(U_1)\cap\pi(U_2)\times\gG(\CC),
\end{align*}
which sends $(s,g)$ to $(s,\gamma^{-1}_sg)$, where $\gamma_s\in\Gamma$
is the unique element such that
\begin{align*}
  \pi^{-1}_{U_1}(s)=\gamma_s\pi^{-1}_{U_2}(s)\in U_1\cap\gamma_s U_2.
\end{align*}
However, by (\ref{cov}), $\gamma=\gamma_s$ is constant on
$\pi(U_1)\cap\pi(U_2)$.

We refer to a covering $\mathcal{C}$ of $X$ satisfying the properties
above as a flat structure for $P$. In particular, a flat structure
$\mathcal{C}$ comes with an associated set
$\{\varphi_U\}_{U\in\mathcal{C}}$ of trivializations. Note that, if
$\mathcal{C}$ and $\mathcal{C}'$ are both flat structures for $P$,
then $\mathcal{C}\cap\mathcal{C}'$ (whose members are precisely those
of the form $U\cap U'$ for $U\in\mathcal{C}$ and $U'\in\mathcal{C}'$)
is also a flat structure for $P$.

\subsection{The canonical foliation}\label{fol}
Recall again the situation described in Section \ref{SPBs}.  By
\cite[Section 1B]{blaine:fol}, we obtain a canonical foliation
$\mathcal{F}$ of $P$. For any $p\in P$, we can obtain the leaf
$\mathcal{L}_p$ of $\mathcal{F}$ through $p$ as follows. Let
$\mathcal{C}$ be a flat structure for $P$, write $p=[x,g_p]$, and let
$U\in\mathcal{C}$ be such that $x\in U$. Then
\begin{align*}
  \varphi^{-1}_U(\mathcal{L}_p\cap\pi^{-1}_P(\pi(U)))=\pi(U)\times\{g_p\}.
\end{align*}
In other words, $\mathcal{L}_p$ is given locally by the images of the
horizontal sections.

\subsection{Intersection dimensions}\label{intdims}

Recall the situation described in Section \ref{fol} and, for a point
$p\in P$, let $\mathcal{L}_p$ denote the leaf of $\mathcal{F}$ through
$p$. Let $V$ denote an irreducible subvariety of $S$. We will make
repeated use of the following lemma.

\begin{lem}\label{intlem}
  Let $p\in P$ and let $Y$ denote a subvariety of $\check{X}$ such
  that $p\in\pi^{-1}_P(V)\cap\beta^{-1}(Y)$. For any choice of
  representation $p=[x,g_p]$, we have
  \begin{align*}
    \dim_p(\cL_p\cap\beta^{-1}(Y)\cap\pi^{-1}_P(V))=\dim_x(g_pY\cap\pi^{-1}(V)).
  \end{align*}

\end{lem}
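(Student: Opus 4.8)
The plan is to reduce the statement to a purely local computation near $x$ by exploiting the fact that, over a suitable neighbourhood, the leaf $\cL_p$ is the image of a horizontal section, and the bundle $P$ trivializes compatibly with the covering map $\pi$. First I would choose a flat structure $\mathcal{C}$ for $P$ and pick $U\in\mathcal{C}$ with $x\in U$, so that $\pi_U\colon U\to\pi(U)$ is biholomorphic and we have the trivialization $\varphi_U\colon \pi(U)\times\gG(\CC)\to\pi_P^{-1}(\pi(U))$. Under $\varphi_U$, by the description of the canonical foliation in Section~\ref{fol}, the piece of the leaf through $p=[x,g_p]$ over $\pi(U)$ is exactly $\pi(U)\times\{g_p\}$; hence the composite biholomorphism
\begin{align*}
  U\xrightarrow{\ \pi_U\ }\pi(U)\xrightarrow{\ s\mapsto\varphi_U(s,g_p)\ }\cL_p\cap\pi_P^{-1}(\pi(U))
\end{align*}
carries $x$ to $p$ and identifies a neighbourhood of $x$ in $U$ with a neighbourhood of $p$ in $\cL_p$. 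So it suffices to track how the three conditions defining the right-hand intersection pull back along this map.

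The key point is to unwind $\beta$ in the trivialization. For $s\in\pi(U)$ write $x_s=\pi_U^{-1}(s)$; then $\varphi_U(s,g_p)=[x_s,g_p]$ and, by definition of $\beta$ in Section~\ref{SPBs}, $\beta([x_s,g_p])=g_p^{-1}F_{\mu_{x_s}}$. Since $x_s$ is the image of $s$ under the Borel embedding $X\hookrightarrow\check X$ (the composite $x\mapsto[x,1]\mapsto F_{\mu_x}$), we have $\beta(\varphi_U(s,g_p))\in Y$ if and only if $F_{\mu_{x_s}}\in g_pY$, i.e. if and only if $x_s\in g_pY$ under the identification of $X$ with its image in $\check X$. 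Likewise $\varphi_U(s,g_p)\in\pi_P^{-1}(V)$ if and only if $\pi(x_s)=s\in V$, i.e. $x_s\in\pi^{-1}(V)$. Therefore the biholomorphism above restricts to a biholomorphism of germs
\begin{align*}
  \bigl(U\cap g_pY\cap\pi^{-1}(V),\,x\bigr)\ \xrightarrow{\ \sim\ }\ \bigl(\cL_p\cap\beta^{-1}(Y)\cap\pi_P^{-1}(V),\,p\bigr),
\end{align*}
and since $U$ is open in $X$ the left-hand germ agrees with that of $g_pY\cap\pi^{-1}(V)$ at $x$. Taking dimensions at the base points (which are preserved under biholomorphism of germs, by the definition of $\dim_x$ recalled in the generalities section) yields the claimed equality.

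I would present the two ingredients — (a) $\varphi_U$ maps $\pi(U)\times\{g_p\}$ onto the local leaf, and (b) $\beta(\varphi_U(s,g))=g^{-1}F_{\mu_{x_s}}$ so that the $\beta^{-1}(Y)$ condition becomes $x_s\in g_pY$ — as the substance of the argument, the rest being formal. The only mild subtlety, and the step I would be most careful about, is the bookkeeping around the $\gG(\CC)$-action conventions: one must check that with the normalization $h\cdot[x,g]=[x,gh^{-1}]$ and $\beta([x,g])=g^{-1}F_{\mu_x}$, the set $\beta^{-1}(Y)$ really corresponds to $\{x_s : x_s\in g_pY\}$ rather than $g_p^{-1}Y$, and that this is independent of the choice of representative $p=[x,g_p]$ in the stated sense (different representatives differ by the diagonal $\Gamma$-action, which is absorbed by changing $U$ within the flat structure and does not affect the germ at $p$). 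Once the conventions are pinned down, everything is an identification of germs and the dimension equality is immediate.
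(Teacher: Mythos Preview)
Your proposal is correct and takes essentially the same approach as the paper: both fix a flat structure, trivialize over $\pi(U)$, identify the local leaf with the horizontal slice $\pi(U)\times\{g_p\}$, and unwind $\beta$ and $\pi_P$ in these coordinates to match the two intersections. Your presentation is marginally more direct in that you parametrize the leaf by $U$ from the outset and pull back the two conditions, whereas the paper computes the three preimages $\varphi_U^{-1}(\cL_p\cap\cdots)$, $\varphi_U^{-1}(\pi_P^{-1}(V)\cap\cdots)$, $\varphi_U^{-1}(\beta^{-1}(Y)\cap\cdots)$ separately in $\pi(U)\times\gG(\CC)$ and then intersects; the content is identical.
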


\begin{proof}
  Write $p=[x,g_p]$. By definition, $x\in g_p Y\cap\pi^{-1}(V)$. Fix a
  flat structure $\mathcal{C}$ for $P$ and consider $U\in\mathcal{C}$
  such that $x\in U$. From Section \ref{fol}, we have
  \begin{align}\label{leafeq}
    \varphi^{-1}_U(\mathcal{L}_p\cap \pi^{-1}_P(\pi(U)))=\pi(U)\times\{g_p\}
  \end{align}
  and, from the definitions, we have
  \begin{align}\label{vareq}
    \varphi^{-1}_U(\pi^{-1}_P(V)\cap \pi^{-1}_P(\pi(U)))=(\pi(U)\cap V)\times\gG(\CC).
  \end{align}
  Finally, we claim that
  \begin{align}\label{geoeq}
    \varphi^{-1}_U(\beta^{-1}(Y)\cap \pi^{-1}_P(\pi(U)))=\{(\pi(gy),g):y\in\check{Y},\ g\in\gG(\CC),\ gy\in U\}.
  \end{align}

  To see (\ref{geoeq}), first note that,
  \begin{align*}
    \beta^{-1}(Y)=\{[gy,g]:y\in Y,\ g\in\gG(\CC)\}.
  \end{align*}
  In particular,
  \begin{align*}
    \beta^{-1}(Y)\cap \pi^{-1}_P(\pi(U))=\{ [gy,g]:y\in Y,\ g\in\gG(\CC),\ \pi(gy)\in\pi(U)\}.
  \end{align*}
  Now, if $\pi(gy)\in\pi(U)$, then $gy\in\gamma U$, for some
  $\gamma\in\Gamma$. That is, $\gamma^{-1}gy=\pi^{-1}_U(\pi(gy))$ and
  so
  \begin{align*}
    \varphi^{-1}_U([gy,g])=(\pi(gy),\gamma^{-1}g)=(\pi(\gamma^{-1}gy),\gamma^{-1}g),
  \end{align*}
  which is an element belonging to the set on the right hand side of
  (\ref{geoeq}). On the other hand, if $y\in Y$, $g\in\gG(\CC)$, and
  $gy\in U$, then $[gy,g]\in \beta^{-1}(Y)\cap \pi^{-1}_P(\pi(U))$
  maps to $(\pi(gy),g)$. This establishes (\ref{geoeq}).

  Combining (\ref{leafeq}), (\ref{vareq}), and (\ref{geoeq}), we
  conclude that
  $\varphi^{-1}_U(\mathcal{L}_p\cap\beta^{-1}(gY)\cap\pi^{-1}_P(V)\cap \pi^{-1}_P(\pi(U)))$
  is equal to the set of tuples $(\pi(g_py),g_p)$, where $y\in Y$ is
  such that $g_py\in U\cap\pi^{-1}(V)$. Therefore, applying
  $\pi^{-1}_U$ to the first factor, we obtain
  \begin{align}\label{inteq}
    \varphi^{-1}_U(\mathcal{L}_p\cap\beta^{-1}(Y)\cap\pi^{-1}_P(V)\cap \pi^{-1}_P(\pi(U)))\cong g_p Y\cap U\cap\pi^{-1}(V),
  \end{align}
  which proves the result.
\end{proof}

\section{The main construction}\label{mainconst}

Recall the situation described in Section \ref{intdims}:
\begin{itemize}
\item[] $(\gG,X)$ is a Shimura datum component such that
  $\gZ(\gG)(\RR)$ is compact;
\item[] $\Gamma$ is a neat arithmetic subgroup of $\gG(\QQ)$ contained
  in $\gG(\QQ)_+$ equal to $\pi_1(S)$, where
\item[] $S$ is the Shimura variety associated with $(\gG,X)$ and
  $\Gamma$;
\item[] $V$ is an irreducible subvariety of $S$;
\item[] $P$ is the standard principal bundle associated with $(\gG,X)$
  and $\Gamma$;
\item[] $\pi:X\rightarrow S$ and $\pi_P:P\rightarrow S$ denote the
  natural maps;
\item[] $\beta:P\rightarrow\check{X}$ denotes the map defined in
  Section \ref{SPBs};
\item[] $\mathcal{F}$ denotes the canonical foliation of $P$ (see
  Section \ref{fol});
\item[] $\mathcal{L}_p$ denotes the leaf of $\mathcal{F}$ through
  $p\in P$.
\end{itemize}

Fix an embedding of $\check{X}$ into projective space and let
$D\in\NN$ be as in Remark \ref{Dbound}. Let
$\Omega(k)=\Omega(\check{X},k,D^{k+1})$ denote the (quasi-projective)
Chow variety of closed irreducible complex subvarieties of $\check{X}$
of codimension at most $k$ and degree at most $D^{k+1}$. Let
\begin{align*}
  \Omega=\cup_{k=0}^{\dim\check{X}}\Omega(k).
\end{align*}
Consider the algebraic subvariety $\Theta=\Theta(V)$ of
$P\times\Omega$ consisting of the tuples $(p,Y)$ such that
$p\in\pi^{-1}_P(V)\cap\beta^{-1}(Y)$. Here we slightly abuse notation by using $Y$ to denote the Chow
  coordinate representing an irreducible variety as well as the
  variety itself. However, the correspondence between the
  Chow coordinate of $Y$ and the points of $Y$ is of course algebraic,
  and $\Theta$ is indeed Zariski closed. Note that there is a natural
morphism $\Theta\to S$ given by the projection to $P$ composed with
$\pi_P$.

We define a function
\begin{align*}
  d=d(V):\Theta\rightarrow\NN\cup\{0\}
\end{align*}
by setting $d(p,Y)=\dim_p(\cL_p\cap\beta^{-1}(Y)\cap\pi^{-1}_P(V))$.

For any $(p,Y_1)\in \Theta$, we let $\delta_1(p,Y_1)$ be the statement
that
\begin{align*}
  \dim Y_1-d(p,Y_1)<\dim Y_2-d(p,Y_2)
\end{align*}
for all $Y_2\in\Omega$ such that $Y_1\subsetneq Y_2$. Similarly, we
let $\delta_2(p,Y_1)$ be the statement that
\begin{align*}
  d(p,Y_2)<d(p,Y_1)
\end{align*}
for all $Y_2\in\Omega$ such that $\beta(p)\in Y_2\subsetneq Y_1$.

We define $\Pi=\Pi(V)$ to be the set of tuples $(p,Y)\in \Theta$ for
which $\delta_1(p,Y)$ and $\delta_2(p,Y)$ hold. For any $d\in\NN$, we
let $\Pi(d)$ denote the set of tuples $(p,Y)\in\Pi$ such that
\begin{align}\label{eq:Pi-d-cond}
  \dim Y-d(p,Y)=d.
\end{align}

Recall the notion of complexity of locally-closed sets introduced
before Theorem~\ref{degbound:intro}. The proof of the following
results relies on differential-algebraic tools developed in
Section~\ref{sec:diff-algebra}, and is presented below to avoid
breaking the logical flow of the paper. However the reader may easily
verify that the contents of Section~\ref{sec:diff-algebra} are self
contained and do not rely on Proposition~\ref{loc-closed}.

\begin{prop}\label{loc-closed}
  The sets $\Pi(d)$ for $d\in\NN$ are locally closed subsets of
  $\Theta$. The complexity of $\Pi(d)$ is bounded $f_S(\deg(V))$ for
  some polynomial $f_S$ depending only on $S$. Moreover, one can
  derive an explicit system of equalities and inequations for
  $\Pi(d)$, as described in Theorem~\ref{degbound:intro}.

  The (algebraic, left) action of $\gG(\CC)$ on
  $P\times\Omega$ defined by $g(p,Y)=(g\cdot p,gY)$ preserves the $\Pi(d)$ and
  their irreducible components.
\end{prop}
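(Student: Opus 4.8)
The plan is to put the family $d=d(V)$ into the form controlled by the differential–algebraic machinery of Section~\ref{sec:diff-algebra}, deduce from it that the relevant loci are (locally) Zariski closed with polynomially–bounded complexity, then carve out $\Pi(d)$ by the incidence conditions $\delta_1,\delta_2$, and finally check $\gG(\CC)$–equivariance. Throughout we work in a chart $U\subseteq S$ over which $P$ trivializes, $\pi_P^{-1}(U)\simeq U\times\gG(\CC)$; since local closedness is a local property, $S$ is covered by finitely many such charts, and the complexity is measured on $U\times\gG(\CC)\times\Omega$, it suffices to argue in one of them. In this chart the canonical foliation $\cF$ of Section~\ref{fol} is spanned by $m=\dim\check X$ algebraic vector fields $\xi_1,\dots,\xi_m$ whose coefficients are the prescribed data describing the connection $\nabla$, and the incidence variety $\Theta$ is Zariski closed in $P\times\Omega$, being the intersection of $\pi_P^{-1}(V)\times\Omega$ with the pullback under $\beta\times\mathrm{id}$ of the universal cycle over $\Omega$; by B\'ezout-type estimates for preimages and intersections, together with the degree bound $D$ of Remark~\ref{Dbound} on the members of $\Omega$, it has complexity at most $f_S(\deg V)$ for a polynomial $f_S$ depending only on $S$ (which may change from line to line below).

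First I would pin down the function $d$, which is the heart of the matter. By Lemma~\ref{intlem} we have $d(p,Y)=\dim_p(\cL_p\cap Z_Y)$, where $Z_Y:=\beta^{-1}(Y)\cap\pi_P^{-1}(V)$ is the fibre of $\Theta\to\Omega$ over $Y$; thus $d$ is the intersection-dimension of the algebraic family $\{Z_Y\}_{Y\in\Omega}$ with the leaves of $\cF$, which is exactly the object controlled by the tools of Section~\ref{sec:diff-algebra}. After a Zariski-locally-closed stratification of $\Omega$, of complexity at most $f_S(\deg V)$, over which the ideal of $Z_Y$ is presented by equations depending algebraically on $Y$, one prolongs those equations by iterating the Lie derivatives $\xi_1,\dots,\xi_m$ up to a suitable order $N$; the condition $d(p,Y)\ge b$ then becomes an ordinary algebraic fibre-dimension condition on the resulting prolongation. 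From this one reads off that for each $b$ the set $\{(p,Y)\in\Theta:d(p,Y)\ge b\}$ is Zariski closed, that the level sets of $d$ and of $(p,Y)\mapsto\dim Y-d(p,Y)$ are locally closed, and that explicit equations for all of these are obtained as iterated $\nabla$-derivatives of the equations of $V$ and of $\beta$ followed by effective elimination (e.g.\ Gr\"obner bases). I expect the single hard point to lie here: a bare Noetherianity argument bounds $N$, and the degrees produced by elimination, only exponentially in $\deg V$, whereas the quantitative estimates of Section~\ref{sec:diff-algebra} are precisely what is needed to make $N$ and the complexity of each level set polynomial in $\deg V$, with the polynomial depending only on $S$.

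Next I would assemble $\Pi(d)$ from these level sets. Decompose $\Theta=\bigsqcup_a\Theta_a$ according to $\dim Y=a$ — finitely many pieces, open and closed in $\Theta$ since the Chow variety decomposes by dimension — and set $\Theta^{\le d}:=\{(p,Y)\in\Theta:\dim Y-d(p,Y)\le d\}$, which is Zariski closed in $\Theta$ by the previous paragraph. On the locally closed stratum $\Theta_a\cap\{\dim Y-d(p,Y)=d\}$, on which $d(p,\cdot)\equiv a-d=:b$, the condition $\delta_1(p,Y_1)$ is the failure of ``$\exists\,Y_2\in\Omega$ with $Y_1\subsetneq Y_2$ and $(p,Y_2)\in\Theta^{\le d}$'', and $\delta_2(p,Y_1)$ is the failure of ``$\exists\,Y_2\in\Omega$ with $\beta(p)\in Y_2\subsetneq Y_1$ and $d(p,Y_2)\ge b$''. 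The incidence loci $\{(Y_1,Y_2):Y_1\subseteq Y_2\}\subseteq\Omega\times\Omega$ and $\{(p,Y):\beta(p)\in Y\}$ are Zariski closed of complexity depending only on $S$; since the members of $\Omega$ have bounded degree, the ambient full Chow variety is proper, so projection along the $Y_2$-factor is a closed map and each of the two ``$\exists$'' loci is Zariski closed in the stratum. Hence $\delta_1$ and $\delta_2$ carve out open subsets, so $\Pi(d)$ is, on each stratum, the intersection of a locally closed set with two open sets; these finitely many pieces glue to a locally closed subset of $\Theta$, and, tracking degrees through the construction, it has complexity at most $f_S(\deg V)$ and is presented explicitly as $Y_d\setminus Z_d$ with $Y_d,Z_d$ Zariski closed of bounded degree — which is Theorem~\ref{degbound:intro}. (One technical wrinkle: the locus of reducible or non-reduced cycles inside the Chow variety is only constructible; it is removed by a further stratification of controlled complexity, or one works on the full proper Chow variety and takes Zariski closures at the end.)

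Finally I would verify the $\gG(\CC)$-equivariance. The diagonal action $g\cdot(p,Y)=(g\cdot p,gY)$ is algebraic; since $\pi_P$ is $\gG(\CC)$-invariant, $\beta$ is $\gG(\CC)$-equivariant, and the canonical connection $\nabla$ — hence the foliation $\cF$ — is $\gG(\CC)$-invariant (so $g\cdot\cL_p=\cL_{g\cdot p}$), the automorphism $g\cdot$ of $P$ carries $\cL_p\cap\beta^{-1}(Y)\cap\pi_P^{-1}(V)$ isomorphically onto $\cL_{g\cdot p}\cap\beta^{-1}(gY)\cap\pi_P^{-1}(V)$. Thus $\Theta$ is preserved, $d(g\cdot p,gY)=d(p,Y)$, $\dim(gY)=\dim Y$, and $Y_2\mapsto gY_2$ is an inclusion-preserving bijection of $\Omega$ compatible with the relation $\beta(p)\in Y_2$; therefore $\delta_1$, $\delta_2$ and the condition $\dim Y-d(p,Y)=d$ are all invariant, so $g\cdot\Pi(d)=\Pi(d)$. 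For the irreducible components, $\Pi(d)$ has finitely many, and for a component $C$ and a smooth point $c_0\in C$ lying on no other component, the morphism $\gG(\CC)\to\Pi(d)$, $g\mapsto g\cdot c_0$, satisfies $g\cdot C=C$ iff $g\cdot c_0\in C$; hence the stabiliser of $C$ is the preimage of the closed subset $C\subseteq\Pi(d)$, so it is a closed subgroup of finite index, hence — $\gG(\CC)$ being connected — all of $\gG(\CC)$. So $\gG(\CC)$ preserves every irreducible component of $\Pi(d)$.
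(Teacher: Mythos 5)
Your overall route is the paper's: use the differential--algebraic estimates of Section~\ref{sec:diff-algebra} to show the superlevel sets $\{d(p,Y)\ge k\}$ are Zariski closed of polynomially bounded degree, realise the failure of $\delta_1,\delta_2$ as projections from a product with the compactified Chow variety and invoke properness, and deduce preservation of irreducible components from connectedness of $\gG(\CC)$. The last part is fine as you wrote it. But there are two concrete points where your sketch does not close.

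First, the application of Section~\ref{sec:diff-algebra}. Proposition~\ref{prop:Sigma-bound} is stated for a single algebraic subvariety of a foliated variety, whereas you need to control $d(p,Y)$ uniformly in the parameter $Y$. Your substitute --- stratify $\Omega$, present the ideal of $Z_Y$ algebraically in $Y$, prolong by Lie derivatives up to some order $N$, and impose a fibre-dimension condition --- is precisely the step you flag as ``the single hard point,'' and as written it is not an application of Proposition~\ref{prop:Sigma-bound} but a re-derivation of a parametric version of it, for which you would have to redo the Gabrielov--Khovanskii bookkeeping with parameters. The paper sidesteps this: equip $P\times\Omega$ with the product foliation $\cF_0$ of the canonical foliation on $P$ with the trivial zero-dimensional foliation on $\Omega$. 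The leaf of $\cF_0$ through $(p,Y)$ is $\cL_p\times\{Y\}$, so $A(k)=\{(p,Y)\in\Theta: d(p,Y)\ge k\}$ is literally $\Sigma(\Theta,\cF_0,k)$ and Proposition~\ref{prop:Sigma-bound} applies verbatim, the parameter being absorbed into the ambient foliated variety. Without this (or an honest parametric Proposition~\ref{prop:Sigma-bound}) your polynomial complexity bound is not established.

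Second, the ``technical wrinkle'' about reducible and non-reduced cycles. Neither of your proposed fixes works as stated: stratifying away that locus destroys the properness you need for the projection along the $Y_2$-factor to be closed, while ``taking Zariski closures at the end'' changes the set being defined. The paper's device is to quantify $Y_2$ over all of $\bar\Omega$ but with the refined closed relation $Y_1\subsetneq^* Y_2$, meaning $Y_1$ is strictly contained in \emph{each} component of the support of $Y_2$; one then checks that if an effective cycle $Y_2$ witnesses the failure of $\delta_1$ (resp.\ $\delta_2$) then so does one of its irreducible components, which lies in $\Omega$ by the degree bound, so quantifying over $\bar\Omega$ is equivalent to quantifying over $\Omega$. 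That equivalence is content you still need to supply.
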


\begin{proof}
  Consider the foliation $\cF_0$ on $P\times\Omega$ given by
  the direct product of (1) the canonical foliation of $P$ and (2) the trivial foliation
  by zero-dimensional leafs on $\Omega$.

  Applying Proposition~\ref{prop:Sigma-bound} to the sets
  $\Sigma(\Theta,\cF_0,k)$ we conclude that the sets
  \begin{equation}
    A(k) := \{(p,Y)\in\Theta : d(p,Y)\ge k \}
  \end{equation}
  are Zariski closed with degrees bounded by a polynomial as claimed,
  and that it is possible to effectively compute equations for these
  sets. (This is our principal ingredient from differential algebra,
  and we will apply this below to deduce the same result for the sets
  $\Pi(d)$.)
  
  Let $d\in\N$. Since $\Omega$ is a disjoint union of Chow varieties
  of different dimensions and degrees, it will be enough to consider
  each of these components separately. We now restrict to $\Omega'$
  given by one of these Chow varieties, and assume that
  $\dim Y$ and $\deg(Y)$ are fixed.

  The set $\Delta(d)\subset P\times\Omega'$ defined by
  condition~\eqref{eq:Pi-d-cond} is given by
  $A(\dim Y-d)\setminus A(\dim Y-d+1)$. It is therefore locally
  closed. We claim further that the condition $\delta_1(p,Y)$ is open
  in $\Delta(d)$. To see this, let $\bar\Omega$ denote the projective
  closure of $\Omega$ and consider
  \begin{align*}
    \Delta_1 := \{(p,Y_1,Y_2)\in P\times\Omega'\times\bar\Omega :\ 
    Y_1\subsetneq^* Y_2,\ d \ge\dim Y_2-d(p,Y_2) \}.
  \end{align*}
  Here we write $Y_1\subsetneq^* Y_2$ to mean that $Y_1$ is strictly
  contained in \emph{each} component of the support of $Y_2$. This is
  a Zariski closed condition, and $\Delta_1$ is therefore closed for
  the same reason that $A(k)$ is closed (with similar degree bounds,
  etc.). Since $\bar\Omega$ is projective, the projection
  $\pi_\Theta(\Delta_1)$ is closed as well and the standard resultant
  constructions from elimination theory produce effective systems of
  equations for this set as well.

  By definition, in $\Delta(d)$ the condition $\delta_1(p,Y)$ essentially
  agrees with the complement of $\pi_\Theta(\Delta_1)$, except for a
  minor technicality: in $\delta_1(p,Y)$ we quantify over $Y_2$ in the
  open Chow variety $\Omega$, whereas, in the definition of $\Delta_1$ we
  used the closed $\bar\Omega$. It is, however, easy to see that
  quantifying over $\bar\Omega$ gives an equivalent condition. Indeed, the points of the closed Chow variety
  $\bar\Omega$ represent effective cycles. If there exists
  $Y_2\in\bar\Omega$ with $Y_1\subsetneq^* Y_2$ such that
  \begin{equation}
    \dim Y_1-d(p,Y_1)<\dim Y_2-d(p,Y_2)
  \end{equation}
  then the same must be true for one of the irreducible components of
  the support of $Y_2$ (note that here it is crucial that we used the
  refined relation $\subsetneq^*$). To conclude, in $\Delta(d)$ the
  condition $\delta_1(p,Y)$ is given by the complement of
  $\pi_\Theta(\Delta_1)$, and is therefore locally closed with the
  stated degree bounds and explicit equations.

  In an entirely analogous way, one checks that $\delta_2(p,Y)$ is open
  in $\Delta$, and this concludes the proof of the local-closedness, as
  well as the degree bounds for $\Pi(d)$.

  The fact that $\gG(\CC)$ preserves the
  $\Pi(d)$ is immediate from the
  definitions. Considering the action as a morphism
  $\gG(\CC)\times P\times\Omega\to P\times\Omega$ yields the remaining
  claims.
\end{proof}

\begin{lem}\label{isweaksp}
  Let $d\in\NN$, let $(p,Y)\in\Pi(d)$, and write $p=[x,g_p]$. Then
  \begin{itemize}
  \item[(i)] $x\in g_pY\cap\pi^{-1}(V)$;
  \item[(ii)] if $A$ denotes an irreducible analytic component of
    $g_pY\cap\pi^{-1}(V)$ passing through $x$ such that
    \begin{align*}
      \dim A=\dim_x(g_pY\cap\pi^{-1}(V)),
    \end{align*}
    $A$ is a Zariski optimal intersection component of $\pi^{-1}(V)$;
  \item[(iii)] writing $X_A$ for the pre-weakly special subvariety
    $\langle A\rangle_{\rm Zar}$ of $X$ (see Theorem \ref{WAS}), we
    have $g_pY=\check{X}_A$;
  \item[(iv)] $\delta_{\rm Zar}(A)=d$.
  \end{itemize}
\end{lem}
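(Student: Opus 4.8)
The plan is to establish (i)--(iv) in order, (ii) being the substantive point and (iii)--(iv) quick consequences of it. Part (i) is pure unwinding of definitions: $p\in\pi_P^{-1}(V)$ gives $\pi(x)=\pi_P(p)\in V$, hence $x\in\pi^{-1}(V)$, while $p\in\beta^{-1}(Y)$ says $\beta(p)=g_p^{-1}F_{\mu_x}\in Y$, i.e.\ $x\in g_pY$ once $x$ is identified with $F_{\mu_x}$ via the Borel embedding. I would then record two bookkeeping facts for later use. By Lemma \ref{intlem} applied to $(p,Y)$, $d(p,Y)=\dim_x(g_pY\cap\pi^{-1}(V))$, so $\dim A=d(p,Y)$ and, by \eqref{eq:Pi-d-cond}, $\dim Y-\dim A=d$. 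Moreover $A$ lies in an irreducible analytic component $C_0$ of $g_pY\cap X$, which is a subvariety of $X$, so $\langle A\rangle_{\rm Zar}\subseteq C_0\subseteq g_pY$; hence $\delta_{\rm Zar}(A)=\dim\langle A\rangle_{\rm Zar}-\dim A\le\dim Y-d(p,Y)=d$, with equality exactly when the Zariski closure of $\langle A\rangle_{\rm Zar}$ in $\check X$ is $g_pY$ — which is the content of (iii). So (iv) will drop out of (iii).

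For (ii) I would show $A$ is Zariski optimal by first reducing to Zariski optimal test varieties: given any intersection component $B_0\supsetneq A$, a component $B\supseteq B_0$ of maximal dimension among those with $\delta_{\rm Zar}(B)\le\delta_{\rm Zar}(B_0)$ is itself Zariski optimal and still properly contains $A$, so it suffices to prove $\delta_{\rm Zar}(B)>\delta_{\rm Zar}(A)$ for every Zariski optimal $B\supsetneq A$. For such $B$, Theorem \ref{WAS} makes $X_B:=\langle B\rangle_{\rm Zar}$ pre-weakly special, with compact dual $\check X_B$ of degree $\le D$ by Remark \ref{Dbound}; also $B\not\subseteq g_pY$ (else $B$ would be an irreducible subset of $g_pY\cap\pi^{-1}(V)$ properly containing the component $A$), hence $\check X_B\not\subseteq g_pY$. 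If $g_pY\subseteq\check X_B$, set $Y_2:=g_p^{-1}\check X_B\in\Omega$; then $\beta(p)\in Y_2$ and $Y\subsetneq Y_2$ (equality would reproduce $B\subseteq g_pY$), so $\delta_1(p,Y)$ forces $\dim Y_2-d(p,Y_2)>d$, and since $d(p,Y_2)=\dim_x(\check X_B\cap\pi^{-1}(V))$ (Lemma \ref{intlem}) is at least $\dim B$, we get $\delta_{\rm Zar}(B)=\dim\check X_B-\dim B\ge\dim Y_2-d(p,Y_2)>d\ge\delta_{\rm Zar}(A)$. Otherwise $g_pY$ and $\check X_B$ are incomparable, and I would instead take an irreducible component $Z$ of $g_pY\cap\check X_B$ through $x$: here $A\subseteq Z\subsetneq g_pY$, so $\codim_{\check X}Z$ strictly exceeds $\codim_{\check X}(g_pY)$ while $\deg Z\le\deg(g_pY)\cdot D$ by refined B\'ezout — precisely the budget that keeps $g_p^{-1}Z$ inside the graded Chow variety $\Omega$ — and then $\beta(p)\in g_p^{-1}Z\subsetneq Y$ with $d(p,g_p^{-1}Z)=\dim_x(Z\cap\pi^{-1}(V))\ge\dim A=d(p,Y)$ contradicts $\delta_2(p,Y)$, so this case cannot occur. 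This settles (ii).

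Granting (ii), part (iii) is the same ``shrinking'' argument applied to $A$ itself. By Theorem \ref{WAS}, $X_A:=\langle A\rangle_{\rm Zar}$ is pre-weakly special with compact dual $\check X_A$, and $\check X_A\subseteq g_pY$ by the bookkeeping above. If the inclusion were strict then $g_p^{-1}\check X_A\in\Omega$ (degree $\le D$), $\beta(p)\in g_p^{-1}\check X_A\subsetneq Y$, and $d(p,g_p^{-1}\check X_A)=\dim_x(\check X_A\cap\pi^{-1}(V))\ge\dim A=d(p,Y)$, contradicting $\delta_2(p,Y)$. Hence $g_pY=\check X_A$, proving (iii); and then $\dim X_A=\dim\check X_A=\dim Y$ gives $\delta_{\rm Zar}(A)=\dim Y-\dim A=d$, proving (iv).

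The hard part is the interplay between the logical and the quantitative sides of (ii)--(iii): to feed an auxiliary subvariety of $\check X$ into the conditions $\delta_1,\delta_2$ one must know it sits in the bounded-degree Chow variety $\Omega$, and the only source of such a degree bound is Remark \ref{Dbound}, which applies to compact duals of pre-weakly special subvarieties — whose pre-weak-specialness is supplied by weak hyperbolic Ax--Schanuel (Theorem \ref{WAS}) only for a \emph{Zariski optimal} component, i.e.\ for the very optimality (ii) asserts. The way through is to invoke Ax--Schanuel not on $A$ but on an auxiliary Zariski optimal $B\supseteq A$ produced by the maximal-dimension reduction, and, in the transverse case, never to feed $\check X_B$ itself into $\delta_2$ but only a component of $g_pY\cap\check X_B$, whose codimension is automatically one larger, so that the single B\'ezout factor of $D$ is absorbed by the exponents $D^{k+1}$ built into the pieces $\Omega(k)$ of $\Omega$. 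Making this degree bookkeeping precise is the delicate step; everything else is a direct translation through Lemma \ref{intlem}, together with the fact — implicit already in Remark \ref{Dbound} — that the $\gG(\CC)$-action on $\check X$ preserves degrees.
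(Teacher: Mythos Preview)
Your proof is correct and follows essentially the same path as the paper's: both pass to an auxiliary Zariski optimal $B\supseteq A$ via weak Ax--Schanuel, feed a component of $g_pY\cap\check X_B$ into $\delta_2$ to force $g_pY\subseteq\check X_B$, and then use $\delta_1$ to force equality. The only difference is organizational: the paper runs the argument once for any $B\supseteq A$ with $\delta_{\rm Zar}(B)\le\delta_{\rm Zar}(A)$ and concludes $B=A$ together with $g_pY=\check X_B$ (so (ii)--(iv) fall out simultaneously), whereas you split (ii) into the two inclusion cases and then treat (iii) by a separate invocation of $\delta_2$ on $\check X_A$.
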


\begin{proof}
  We will imitate the proof of \cite[Lemma 6.14]{DR}. The fact that
  $x\in g_pY\cap\pi^{-1}(V)$ follows from the definition of
  $\Pi$. Note also that
  \begin{align*}
    \dim_x(g_pY\cap\pi^{-1}(V))=d(p,Y)
  \end{align*}
  by (\ref{inteq}). Therefore, let $A$ be as in (ii). By definition,
  $A$ is an intersection component and $\langle A\rangle_{\rm Zar}$ is
  contained in $g_pY$. Therefore, since $(p,Y)\in\Pi(d)$, we have
  \begin{align*}
    \delta_{\rm Zar}(A)\leq\dim g_pY-\dim A=\dim Y-d(p,Y)= d.
  \end{align*} 

  Let $B$ be an intersection component of $\pi^{-1}(V)$ containing $A$
  such that $\delta_{\rm Zar}(B)\leq\delta_{\rm Zar}(A)$. We can and
  do assume that $B$ is Zariski optimal and, therefore, by Theorem
  \ref{WAS}, $\langle B\rangle_{\rm Zar}$ is equal to a pre-weakly
  special subvariety $X_B$ of $X$.

  Let $Z$ be an irreducible component of $g_pY\cap\check{X}_B$
  containing $A$. Observe that, either $Z=g_pY$, $Z=\check{X}_B$, or
  $\codim Z>\codim g_pY$. In all cases, the degree of $Z$ in
  $\check{X}$ is at most $D^{\codim Z+1}$ and so
  $Z\in\Omega$. However, $\beta(p)\in g^{-1}_pZ\subseteq Y$, and
  \begin{align*}
    d(p,g^{-1}_pZ)=\dim_x(Z\cap\pi^{-1}(V))=\dim A=d(p,Y).
  \end{align*}
  Therefore, since $\delta_2(p,Y)$ holds, we conclude that
  $Z=g_pY$. In particular, $g_pY$ is contained in $\check{X}_B$.

  On the other hand,
  \begin{align*}
    \dim\check{X}_B-d(p,g^{-1}_p\check{X}_B)&=\dim\check{X}_B-\dim_x(\check{X}_B\cap\pi^{-1}(V))\\
                                            &\leq\delta_{\rm Zar}(B)\\
                                            &\leq\delta_{\rm Zar}(A)\\
                                            &\leq\dim Y-d(p,Y)
  \end{align*}
  and, since $\delta_1(p,Y)$ holds, it follows that
  $g_pY=\check{X}_B$. Therefore, $B=A$ and
  \begin{align*}
    \delta_{\rm Zar}(B)=\dim Y-d(p,Y)=d.
  \end{align*}
\end{proof}

\begin{lem}\label{findarep}
  Let $A$ be a Zariski optimal intersection component of $\pi^{-1}(V)$
  and let $d=\delta_{\rm Zar}(A)$. Let $X_A$ denote the pre-weakly
  special subvariety $\langle A\rangle_{\rm Zar}$ of $X$ (see Theorem
  \ref{WAS}). Then, for any $p=[x,1]$ with $x\in A$ satisfying
  \begin{align*}
    \dim_x(\check{X}_A\cap\pi^{-1}(V))=\dim A,
  \end{align*}
  we have $(p,\check{X}_A)\in\Pi(d)$.
\end{lem}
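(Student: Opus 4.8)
The plan is to verify that the tuple $(p,\check X_A)$ satisfies all the defining conditions of $\Pi(d)$: first membership in $\Theta$, then the identity relating $\dim\check X_A-d(p,\check X_A)$ to $d$, and finally the two optimality conditions $\delta_1$ and $\delta_2$. Throughout I would use the representation $p=[x,1]$, so that $g_p=1$ and the ``translation by $g_p$'' in Lemma \ref{intlem} disappears, which simplifies every intersection-dimension computation. The key input is Lemma \ref{intlem} applied with $g_p=1$: for any subvariety $Y$ of $\check X$ with $\beta(p)=F_{\mu_x}\in Y$ we have $d(p,Y)=\dim_x(Y\cap\pi^{-1}(V))$.

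First I would check $(p,\check X_A)\in\Theta$: since $x\in A\subset X_A\subset\check X_A$, we have $F_{\mu_x}=\beta([x,1])\in\check X_A$ (using that $X_A$ maps into $\check X_A$ under the Borel embedding), so $p\in\beta^{-1}(\check X_A)$; and $x\in A\subset\pi^{-1}(V)$ gives $p\in\pi_P^{-1}(V)$. Also $\check X_A\in\Omega$ because $X_A$ is pre-weakly special, so $\check X_A=\check X_{\gH,x_2}$ for some $\QQ$-splitting, and by Remark \ref{Dbound} its degree is at most $D\le D^{\codim\check X_A+1}$. Next, by the hypothesis on $x$ and Lemma \ref{intlem}, $d(p,\check X_A)=\dim_x(\check X_A\cap\pi^{-1}(V))=\dim A$, while $\dim\check X_A=\dim X_A=\dim\langle A\rangle_{\rm Zar}$, so
\begin{align*}
\dim\check X_A-d(p,\check X_A)=\dim\langle A\rangle_{\rm Zar}-\dim A=\delta_{\rm Zar}(A)=d,
\end{align*}
which is exactly condition \eqref{eq:Pi-d-cond}; so it remains only to establish $\delta_1(p,\check X_A)$ and $\delta_2(p,\check X_A)$.

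For $\delta_2(p,\check X_A)$: suppose $Y_2\in\Omega$ with $\beta(p)\in Y_2\subsetneq\check X_A$. Then $Y_2\cap X$ is a (finite union of) subvarieties of $X$ containing a subvariety through $x$, and any irreducible analytic component $A'$ of $Y_2\cap\pi^{-1}(V)$ through $x$ is an intersection component contained in $A$ (since $\langle A'\rangle_{\rm Zar}\subseteq Y_2\cap X\subsetneq\check X_A\cap X=X_A$, using Lemma \ref{intwithcompact}, the component $A'$ cannot equal $A$ because $\langle A\rangle_{\rm Zar}=X_A$). If $d(p,Y_2)\ge d(p,\check X_A)=\dim A$, then $A'$ would have dimension $\ge\dim A$, forcing $A'=A$ and hence $\langle A\rangle_{\rm Zar}\subseteq Y_2\subsetneq\check X_A$, contradicting $\langle A\rangle_{\rm Zar}=X_A$, i.e. $\check X_A=\langle A\rangle^{\vee}_{\rm Zar}$. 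So $d(p,Y_2)<d(p,\check X_A)$. For $\delta_1(p,\check X_A)$: suppose $Y_2\in\Omega$ with $\check X_A\subsetneq Y_2$; let $A'$ be an irreducible analytic component of $Y_2\cap\pi^{-1}(V)$ containing $A$ and with $\dim A'=\dim_x(Y_2\cap\pi^{-1}(V))=d(p,Y_2)$ (this is legitimate since $\beta(p)\in\check X_A\subseteq Y_2$). Then $A'$ is an intersection component of $\pi^{-1}(V)$ containing $A$, so by Zariski optimality of $A$, either $A'=A$ — in which case $\dim Y_2-d(p,Y_2)\ge\dim Y_2-\dim A>\dim\check X_A-\dim A=\dim\check X_A-d(p,\check X_A)$ since $\dim Y_2>\dim\check X_A$ — or $A\subsetneq A'$ and $\delta_{\rm Zar}(A)<\delta_{\rm Zar}(A')\le\dim Y_2-\dim A'=\dim Y_2-d(p,Y_2)$, which again gives $\dim\check X_A-d(p,\check X_A)=\delta_{\rm Zar}(A)<\dim Y_2-d(p,Y_2)$. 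Either way $\delta_1(p,\check X_A)$ holds, completing the proof.

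The step I expect to be the main obstacle is the bookkeeping in $\delta_2$: one must be careful that a subvariety $Y_2\subsetneq\check X_A$ with $\beta(p)\in Y_2$ genuinely forces its intersection components through $x$ to be \emph{strictly} smaller than $A$, which relies on $\langle A\rangle_{\rm Zar}=X_A$ being exactly $\check X_A\cap X$ (Lemma \ref{intwithcompact}), together with the fact that $Y_2$ being a proper subvariety of $\check X_A$ intersected with $X$ cannot contain $X_A$. The analogous reasoning for $\delta_1$ is a transcription of the argument in \cite[Lemma 6.14]{DR} and should go through routinely once the dimension identity $d(p,\check X_A)=\dim A$ is in hand.
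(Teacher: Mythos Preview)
Your proposal is correct and follows essentially the same route as the paper's proof: check $(p,\check X_A)\in\Theta$, compute $\dim\check X_A-d(p,\check X_A)=d$ from Lemma~\ref{intlem} and the hypothesis on $x$, then derive $\delta_1$ from Zariski optimality of $A$ (the paper defers to the argument of \cite[Lemma~6.13]{DR} where you spell out the two cases $A'=A$ and $A\subsetneq A'$) and $\delta_2$ from minimality of $X_A=\langle A\rangle_{\rm Zar}$. The $\delta_2$ step, which you rightly flag as the delicate one, is handled in the paper by the equally brief assertion that $\dim_x(Y\cap\pi^{-1}(V))=\dim_x(\check X_A\cap\pi^{-1}(V))$ forces $A\subseteq Y$; your version (passing through a component $A'$ and arguing $A'=A$) is the same reasoning unpacked.
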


The proof of Lemma \ref{findarep} is very similar to the proof of
\cite[Lemma 6.13]{DR}. However, note that there is a typographical
error in the statement of \cite[Lemma 6.13]{DR}: the term ``pre-weakly
special'' should be replaced by ``Zariski optimal''. This also occurs
in \cite[Proposition 6.10]{DR} and its proof.

\begin{proof}[Proof of Lemma \ref{findarep}]

  Let $p=[x,1]$ with $x\in A$ satisfying
  \begin{align*}
    \dim_x(\check{X}_A\cap\pi^{-1}(V))=\dim A.
  \end{align*}
  Since, $X_A$ is pre-weakly special, $(p,\check{X}_A)\in \Theta$ and
  we will now show that $(p,\check{X}_A)\in\Pi$.

  To that end, suppose that $\delta_1(p,\check{X}_A)$ does not
  hold. Therefore, there exists $Y\in\Omega$ such that
  $\check{X}_A\subsetneq Y$ and
  \begin{align}\label{delta2}
    \dim \check{X}_A-d(p,\check{X}_A)\geq\dim Y-d(p,Y).
  \end{align}
  Recall that
  $d(p,\check{X}_A)=\dim_x(\check{X}_A\cap\pi^{-1}(V))=\dim A$. Let
  $B$ be an irreducible analytic component of $Y\cap\pi^{-1}(V)$
  passing through $x$ such that $\dim B=\dim_x(Y\cap\pi^{-1}(V))$. In
  other words,
  \begin{align*}
    d(p,Y)=\dim_p(\cL_p\cap\beta^{-1}(Y)\cap\pi^{-1}_P(V))=\dim_x(Y\cap\pi^{-1}(V))=\dim B,
  \end{align*}
  and we obtain $\delta_{\rm Zar}(B)\leq\delta_{\rm Zar}(A)$. It
  follows, as in the proof of \cite[Lemma 6.13]{DR}, that
  $B=A$. However, this contradicts (\ref{delta2}), and so
  $\delta_1(p,\check{X}_A)$ holds.

  Now suppose that $\delta_2(p,\check{X}_A)$ does not hold. Therefore,
  there exists $Y\in\Omega$ such that $Y\subsetneq \check{X}_A$ and
  $d(p,Y)=d(p,\check{X}_A)$. However, this implies that
  \begin{align*}
    \dim_x(Y\cap\pi^{-1}(V))=\dim_x(\check{X}_A\cap\pi^{-1}(V)),
  \end{align*}
  from which it follows that $A$ is contained in
  $Y\subsetneq \check{X}_A$. However, this contradicts the fact that
  $X_A=\langle A\rangle_{\rm Zar}$, and so $\delta_2(p,\check{X}_A)$
  holds.

  Finally, since $d(p,\check{X}_A)=\dim A$, we have
  \begin{align*}
    \dim \check{X}_A-d(p,\check{X}_A)=\dim X_A-\dim A=d,
  \end{align*}
  and so $(p,\check{X}_A)\in\Pi(d)$.
\end{proof}

\begin{lem}\label{fibers}
  Let $\Pi^\circ$ denote an irreducible component of $\Pi$. There exists a pre-special subvariety $X_\gH$ of
  $X$ and a $\QQ$--splitting $X_\gH=X_1\times X_2$ such that, for any
  $(p,Y)\in\Pi^\circ$, if we write $p=[x,g]$, then
  $gY=\gamma\check{X}_{\gH,x_2}$ for some $\gamma\in\Gamma$ and some
  $x_2\in X_2$.
\end{lem}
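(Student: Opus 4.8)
The plan is to transport the question to the universal cover, where the multivalued assignment $(p,Y)\mapsto gY$ becomes a single-valued holomorphic map, and then combine the rigidity of pre-weakly special subvarieties (Lemmas~\ref{intwithcompact} and~\ref{isweaksp}) with a Baire-category argument on the irreducible analytic space $\Pi^\circ$. First I would set up the following. Let $q\colon X\times\gG(\CC)\times\Omega\to P\times\Omega$ be the covering map induced by $X\times\gG(\CC)\to P$, and put $\tilde\Pi^\circ=q^{-1}(\Pi^\circ)$, a locally closed analytic set carrying a $\Gamma$-action, with $\Gamma\backslash\tilde\Pi^\circ=\Pi^\circ$. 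On $\tilde\Pi^\circ$ one has a genuine holomorphic map $\mu\colon((x,g),Y)\mapsto gY$, the restriction of the algebraic action $\gG(\CC)\times\Omega\to\Omega$, which is $\Gamma$-equivariant in the obvious sense. By Lemma~\ref{isweaksp}(iii), for every point of $\tilde\Pi^\circ$ the variety $gY$ equals $\check{X}_A$ for a pre-weakly special subvariety $X_A$ of $X$; thus $\mu$ takes values in the set $\mathcal W\subset\Omega$ of compact duals of pre-weakly special subvarieties of $X$ (these lie in $\Omega$ by Remark~\ref{Dbound}). There are only countably many triples $T=(X_\gH,X_1\times X_2)$ (a Shimura subdatum being defined over $\QQ$), each giving a family $\mathcal W_T=\{\check{X}_{\gH,x_2}:x_2\in X_2\}$; since $\check{X}_{\gH,x_2}=\check{X}_1\times\{\check x_2\}$, this family is the restriction to $X_2$ of the algebraic map $\check{X}_2\to\Omega$, $z\mapsto\check{X}_1\times\{z\}$, whose image $Z_T$ is an irreducible closed subvariety of $\Omega$, and $\mathcal W=\bigcup_T\mathcal W_T$. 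The rigidity point I would record is that \emph{inside $Z_T$ the only points representing compact duals of pre-weakly special subvarieties of $X$ are those of $\mathcal W_T$}: if $\check{X}_1\times\{z\}$, $z\in\check{X}_2$, is the compact dual $\check W$ of a pre-weakly special $W\subset X$, then $\emptyset\neq W\subseteq\check{X}_1\times\{z\}\subseteq\check{X}_\gH$, so Lemma~\ref{intwithcompact} gives $W\subseteq X\cap\check{X}_\gH=X_1\times X_2$ and hence $z\in X_2$; the same holds for every $\Gamma$-translate of $Z_T$.

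The heart of the argument then runs as follows. For $\gamma\in\Gamma$ one has $\gamma\mathcal W_T=\mathcal W_{\gamma T\gamma^{-1}}$, where $\gamma T\gamma^{-1}:=(\gamma X_\gH,\gamma X_1\times\gamma X_2)$ is again a triple, so for a $\Gamma$-conjugacy class $[T]$ of triples the set $\mathcal M_{[T]}:=\bigcup_{\gamma\in\Gamma}\gamma\mathcal W_T$ is $\Gamma$-stable, and $\mathcal W=\bigcup_{[T]}\mathcal M_{[T]}$ is a countable union. I would show that $\mu^{-1}(\mathcal M_{[T]})$ is a closed analytic subset of $\tilde\Pi^\circ$. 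By the rigidity point and $\mu(\tilde\Pi^\circ)\subseteq\mathcal W$, one has $\mu^{-1}(\gamma\mathcal W_T)=\mu^{-1}(\gamma Z_T)$, which is closed analytic; and the family $\{\mu^{-1}(\gamma\mathcal W_T)\}_{\gamma\in\Gamma}$ is locally finite, because a point $((x,g),Y)$ in the $\gamma$-th member has $x\in X\cap\gamma\check{X}_\gH=\gamma X_\gH$ (Lemma~\ref{intwithcompact} again), while $\{\gamma X_\gH\}_{\gamma\in\Gamma}$ is locally finite in $X$ — being the family of irreducible components of the closed analytic set $\pi^{-1}(\pi(X_\gH))=\bigcup_\gamma\gamma X_\gH$ — and each $X_\gH$ has only finitely many $\QQ$-splittings. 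Hence each $\mathcal M_{[T]}$, being $\Gamma$-stable, descends to a closed analytic subset $\Pi^\circ_{[T]}$ of $\Pi^\circ$, and $\Pi^\circ=\bigcup_{[T]}\Pi^\circ_{[T]}$ is a countable union of closed analytic subsets. Since $\Pi^\circ$ is irreducible its analytification is irreducible, and as $\Pi^\circ$ is locally compact and second countable Baire's theorem forces $\Pi^\circ=\Pi^\circ_{[T_0]}$ for a single class $[T_0]$ (a proper closed analytic subset of an irreducible analytic space has empty interior). Fixing a representative $T_0=(X_\gH,X_1\times X_2)$ — a triple, so $X_\gH$ is pre-special with $\QQ$-splitting $X_1\times X_2$, as required — any $(p,Y)\in\Pi^\circ$ written $p=[x,g]$ satisfies $gY\in\mathcal M_{[T_0]}$, i.e.\ $gY=\gamma\check{X}_{\gH,x_2}$ for some $\gamma\in\Gamma$ and $x_2\in X_2$, which is the assertion.

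The step I expect to be the main obstacle is precisely the rigidity point together with the $\Gamma$-bookkeeping: verifying that the Zariski closures $Z_T$ of the pre-weakly special families are genuinely algebraic and contain no pre-weakly special members beyond the evident ones — where Lemma~\ref{intwithcompact} is indispensable — and that the a priori uncountable collection $\{\gamma\mathcal W_T\}_{\gamma}$ organizes into the closed analytic sets $\mathcal M_{[T]}$ through local finiteness of the $\Gamma$-translates of $X_\gH$. Once these structural facts are in hand, the uniformization, the Baire-category reduction, and the final unwinding are routine. (One could shorten the list of triples from countable to finite by invoking \cite[Proposition 6.3]{DR}, but this is not needed for the argument above.)
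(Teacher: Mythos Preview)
Your proof is correct and uses the same core ingredients as the paper---Lemma~\ref{isweaksp}(iii) to know each $gY$ is the compact dual of a pre-weakly special, countability of triples, a Baire argument against irreducibility, and Lemma~\ref{intwithcompact} to force $x_2\in X_2$---but the packaging is genuinely different. The paper works \emph{locally}: it fixes a flat-structure chart $U_0$, trivializes $P$ there, constructs for each triple a closed algebraic set $\cR(X_\gH,X_1,X_2)\subset S\times\gG(\CC)\times\Omega$ via a fibre-dimension condition on the Chow variety, runs the Baire step on the (connected) local image $f_{U_0}(\Pi^\circ_{U_0})$ to single out one triple, and then propagates to all of $\Pi^\circ$ by path-connectedness, the $\Gamma$-ambiguity entering through the flat-structure transition functions. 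You instead pass once to the cover $\tilde\Pi^\circ$, group triples into $\Gamma$-conjugacy classes, and show each class descends to a closed analytic subset of $\Pi^\circ$ directly; this costs you two extra ingredients the paper never needs---the rigidity observation $Z_T\cap\mathcal W=\mathcal W_T$ and the local-finiteness of $\{\gamma X_\gH\}_\gamma$ via the components of $\pi^{-1}(\pi(X_\gH))$---but in exchange you avoid the flat-structure apparatus entirely and run Baire just once, globally. Both routes are sound; yours is conceptually cleaner, the paper's is shorter once the trivialization machinery of the preceding sections is already in hand.
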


\begin{proof}
  Fix a flat structure $\mathcal{C}$ for $P$ and let
  $(p_0,Y_0)\in\Pi^\circ$. Write $p_0=[x_0,g_0]$ and let
  $U_0\in\mathcal{C}$ be such that $x_0\in U_0$. We have a
  biholomorpic map
  \begin{align*}
    \tilde{U}_0=\pi^{-1}_P(\pi(U_0))\times\Omega\xrightarrow{f_{U_0}}\pi(U_0)\times\gG(\CC)\times\Omega
  \end{align*}
  given by $(p,Y)\mapsto (\varphi^{-1}_{U_0}(p),Y)$. We write
  $\Pi^\circ_{U_0}=\Pi^\circ\cap\tilde{U}_0$.

  Observe that, for any pre-special subvariety $X_{\gH}$ of $X$ and
  any $\QQ$--splitting $X_1\times X_2$ of $X_{\gH}$, the set
  $\cR(X_\gH,X_1,X_2)$ of points
  $(s,g,Y)\in S\times\gG(\CC)\times\Omega$ such that
  $gY=\check{X}_{\gH,x_2}$ for some $x_2\in\check{X}_2$ is a closed
  algebraic subvariety. To see this, let $\cS(X_\gH,X_1,X_2)$ denote
  the closed algebraic subvariety of tuples
  \begin{align*}
    (y,g,Y,x_2)\in\check{X}\times\gG(\CC)\times\Omega\times\check{X}_2
  \end{align*}
  such that $y\in gY\cap \check{X}_{\gH,x_2}$ and let $f$ denote the
  natural projection from $\cS(X_\gH,X_1,X_2)$ to
  $\gG(\CC)\times\Omega\times\check{X}_2$. Observe that the set of
  points $(g,Y,x_2)\in\gG(\CC)\times\Omega\times\check{X}_2$
  satisfying
  \begin{align*}
    \dim f^{-1}((g,Y,x_2))\geq\max\{\dim Y,\dim X_1\}
  \end{align*}
  constitutes a closed algebraic subvariety (to simplify the
  exposition, one may assume that $\dim Y$ is constant on $\Omega$).
  Now the observation follows from the fact that, because
  $\check{X}_2$ is projective, the natural projection from
  $\gG(\CC)\times\Omega\times\check{X}_2$ to $\gG(\CC)\times\Omega$ is
  closed.

  By Lemma \ref{isweaksp}, $f_{U_0}(\Pi^\circ_{U_0})$ is
  contained in the union of the $\mathcal{R}(X_\gH,X_1, X_2)$ as
  $X_\gH$ varies over the pre-special subvarieties of $X$ and
  $X_1\times X_2$ varies over the $\QQ$--splittings of
  $X_\gH$. Furthermore, after possibly replacing $U_0$ with a subset
  (also belonging to $\mathcal{C}$), we may assume that
  $f_{U_0}(\Pi^\circ_{U_0})$ is connected. Therefore, since
  their union is countable, $f_{U_0}(\Pi^\circ_{U_0})$ is
  contained in one of the $\mathcal{R}(X_\gH,X_1, X_2)$, which we
  denote $\mathcal{R}$. 

  By definition, $P\times\Omega$ is covered by the union of the
  $\tilde{U}$ as $U$ varies over the elements of
  $\mathcal{C}$. Therefore, since $\Pi^\circ$ is path-connected and
  the transition functions associated with the trivializations of $P$
  are given by elements of $\Gamma$, we conclude that there exists a
  pre-special subvariety $X_\gH$ of $X$ and a $\QQ$--splitting
  $X_\gH=X_1\times X_2$ such that, for any $(p,Y)\in\Pi$, if we write
  $p=[x,g]$, then $gY=\gamma\check{X}_{\gH,x_2}$ for some
  $\gamma\in\Gamma$ and some $x_2\in \check{X}_2$. To conclude the
  proof, we recall that $x\in gY$. Hence,
  $x\in X\cap\gamma\check{X}_{\gH,x_2}$ and so $x_2\in X_2$, by Lemma
  \ref{intwithcompact}.
\end{proof}

With each $d\in\NN$ and each irreducible component $\Pi(d)^\circ$ of $\Pi(d)$, we associate a triple
$T=(X_\gH,X_1,X_2)$, where $X_\gH$ is a pre-special subvariety $X_\gH$
of $X$ and $X_\gH=X_1\times X_2$ is a $\QQ$--splitting, such that
Lemma \ref{fibers} holds for all $(p,Y)\in\Pi(d)^\circ$. With the
triple $T$ we associate the standard principal bundle
$P_T=\Gamma_\gH\backslash(X_\gH\times\gH(\CC))$ associated with
$(\gH,X_\gH)$ and $\Gamma_\gH=\Gamma\cap\gH(\QQ)_+$. (This is indeed a
principal bundle since $\Gamma_\gH$ is neat and, by \cite[Remark
2.3]{uy:andre-oort}, $\gZ(\gH)(\RR)$ is compact.) We let $\Omega_T$
denote the subvariety of $\Omega$ comprising the subvarieties of
$\check{X}$ of the form $\check{X}_{\gH,x_2}$ for some
$x_2\in\check{X}_2$. By \cite[Section 3]{milne:connected}, the natural
map
\begin{align*}
  P_T\times\Omega_T\xrightarrow{\iota_{P_T}}P\times\Omega
\end{align*}
is algebraic, and it is easy to check that it is injective. We let
$\Pi(d)_T$ denote the locally closed subset
\begin{align*}
  \iota_{P_T}^{-1}(\Pi(d)^\circ)\subset P_T\times\Omega_T,
\end{align*}
and we let $\Pi(d)_{S_T}$ denote its image in
$S_T=\Gamma_\gH\backslash X_\gH$ under the natural map.

If we let $\gH_1\times\gH_2$ denote the decomposition of $\gH^\ad$
giving rise to the $\QQ$--splitting $X_\gH=X_1\times X_2$, and we let
$\Gamma_2$ denote the image of $\Gamma_\gH$ in $\gH_2(\QQ)$, we obtain
a diagram \medskip
\begin{center}
  \begin{tikzcd}
    X_2 \arrow[d, "\pi_{T,2}"] & X_\gH \arrow[d, "\pi_T"] \arrow[r, hook] \arrow[l]                         & X \arrow[d, "\pi"] \\
    \Gamma_2\backslash X_2 & S_T \arrow[l, "\phi_{T}"'] \arrow[r,
    "\iota_T"] & S.
  \end{tikzcd}
\end{center}

We let $\overline{\Pi}(d)_{S_T}$ denote the union of the Zariski
closures of the fibers of the map $\phi_{T}$ restricted to
$\Pi(d)_{S_T}$ and we let $\phi_{T,d}$ denote the map
\begin{align*}
  \overline{\Pi}(d)_{S_T}\to\phi_{T}(\Pi(d)_{S_T})
\end{align*}

\begin{lem}\label{fiberwise}
  The sets $\overline{\Pi}(d)_{S_T}$ are constructible.
\end{lem}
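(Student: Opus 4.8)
The plan is to exhibit $\overline{\Pi}(d)_{S_T}$ as the image of a constructible set under an algebraic morphism, and then invoke Chevalley's theorem. First I would recall that $\Pi(d)^\circ$ is locally closed in $P\times\Omega$ by Proposition \ref{loc-closed}, hence $\Pi(d)_T=\iota_{P_T}^{-1}(\Pi(d)^\circ)$ is locally closed in $P_T\times\Omega_T$ (preimage of a locally closed set under the algebraic map $\iota_{P_T}$), and therefore $\Pi(d)_{S_T}$, being its image under the algebraic quotient map $P_T\times\Omega_T\to S_T$ (more precisely, the projection to $P_T$ followed by $\pi_{P_T}:P_T\to S_T$), is constructible in $S_T$. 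The key geometric observation, which comes from Lemma \ref{fibers} applied to $\Pi(d)^\circ$: for $(p,Y)\in\Pi(d)^\circ$ with $p=[x,g]$, one has $gY=\gamma\check X_{\gH,x_2}$ for some $\gamma\in\Gamma$ and $x_2\in X_2$, so the fiber of $\phi_T$ through the image of $(p,Y)$ is essentially the image in $S_T$ of $X_{\gH,x_2'}=X_1\times\{x_2'\}$ for the corresponding $x_2'$, i.e.\ a weakly special subvariety of $S_T$. In particular these fibers are already closed (they are closed algebraic subvarieties of $S_T$), which is why one expects $\overline{\Pi}(d)_{S_T}$ — the union of Zariski closures of fibers of $\phi_T\rest{\Pi(d)_{S_T}}$ — to behave well.

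Next I would set up the relevant correspondence variety. Consider the algebraic morphism $\psi:S_T\to\Gamma_2\backslash X_2$ (this is $\phi_T$) and form the fiber product description: $\overline{\Pi}(d)_{S_T}=\{s\in S_T:\ \psi(s)\in\psi(\Pi(d)_{S_T})\}$ intersected appropriately — more carefully, a point $s\in S_T$ lies in $\overline{\Pi}(d)_{S_T}$ iff $s$ lies in the Zariski closure of $\psi^{-1}(\psi(s'))\cap\Pi(d)_{S_T}$ for some $s'$ with $\psi(s')=\psi(s)$. Since $\psi^{-1}(\psi(s'))$ is already a closed (weakly special) subvariety $S_{\gH,x_2}$ of $S_T$, and $\Pi(d)_{S_T}$ is constructible, the intersection $\psi^{-1}(\psi(s'))\cap\Pi(d)_{S_T}$ is constructible inside this fixed subvariety; its Zariski closure is then a union of some of the irreducible components of $S_{\gH,x_2}$ (here one uses that a weakly special subvariety, being an image of $X_1\times\{x_2\}$, is irreducible, so in fact the closure is either empty or all of $S_{\gH,x_2}$). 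Consequently $\overline{\Pi}(d)_{S_T}=\bigcup\{S_{\gH,x_2}:\ S_{\gH,x_2}\cap\Pi(d)_{S_T}\neq\emptyset\}$. This reduces the problem to showing that the set $E=\{x_2\in\Gamma_2\backslash X_2:\ \phi_T^{-1}(x_2)\cap\Pi(d)_{S_T}\neq\emptyset\}$ is constructible and that the total space $\bigcup_{x_2\in E}\phi_T^{-1}(x_2)=\phi_T^{-1}(E)$ is constructible.

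Finally, $E=\phi_T(\Pi(d)_{S_T})$ is constructible by Chevalley's theorem, since $\phi_T$ is an algebraic morphism and $\Pi(d)_{S_T}$ is constructible; and then $\phi_T^{-1}(E)$ is constructible as the preimage of a constructible set under an algebraic morphism. Since $\overline{\Pi}(d)_{S_T}=\phi_T^{-1}(\phi_T(\Pi(d)_{S_T}))$ by the discussion above (the fibers of $\phi_T$ are irreducible, hence equal to their own closures once they meet $\Pi(d)_{S_T}$), we conclude $\overline{\Pi}(d)_{S_T}$ is constructible. I expect the main obstacle to be the bookkeeping in the second paragraph: verifying cleanly that the fibers of $\phi_T$ are genuinely the (irreducible) weakly special subvarieties $S_{\gH,x_2}$ — one must track the passage from $X_\gH$ through $\Gamma_\gH\backslash X_\gH$ and deal with the $\gamma\in\Gamma$ ambiguity in Lemma \ref{fibers}, using Lemma \ref{intwithcompact} to ensure $x_2\in X_2$ rather than merely $\check X_2$ — so that taking the Zariski closure of a fiber does not enlarge it beyond $S_{\gH,x_2}$ and the identity $\overline{\Pi}(d)_{S_T}=\phi_T^{-1}(\phi_T(\Pi(d)_{S_T}))$ holds on the nose.
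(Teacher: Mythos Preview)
Your argument contains a genuine gap at the key step. You claim that the Zariski closure of the fiber $\phi_T^{-1}(z_2)\cap\Pi(d)_{S_T}$ inside $S_{\gH,x_2}$ must be ``a union of some of the irreducible components of $S_{\gH,x_2}$,'' and hence (since $S_{\gH,x_2}$ is irreducible) either empty or all of $S_{\gH,x_2}$. This is false: the Zariski closure of a nonempty constructible subset of an irreducible variety is in general a proper closed subvariety, not a union of irreducible components of the ambient variety. Concretely, Theorem~\ref{main-const-thm} shows that the irreducible components of these fiber closures have dimension $\dim X_1-d$, which is strictly smaller than $\dim S_{\gH,x_2}=\dim X_1$ whenever $d>0$. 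Consequently the identity $\overline{\Pi}(d)_{S_T}=\phi_T^{-1}(\phi_T(\Pi(d)_{S_T}))$ fails for $d>0$, and your reduction to Chevalley's theorem collapses.

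The paper's proof avoids this by appealing to a general principle that does not depend on the specific geometry of the fibers: for any constructible map $f:X\to Y$, the union of the Zariski closures of the fibers $f^{-1}(y)$ is constructible. The point is that the fiber closures have uniformly bounded degree (a standard fact for constructible families), so they are cut out set-theoretically by polynomials of uniformly bounded degree $N$; one can then describe the union by the first-order condition ``every polynomial of degree $\le N$ vanishing on $f^{-1}(f(x))$ vanishes at $x$,'' which involves quantification only over a finite-dimensional space of polynomials and is therefore constructible. This argument makes no use of the fibers of $\phi_T$ being weakly special, and in particular does not require them to coincide with their own closures inside $\Pi(d)_{S_T}$.
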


\begin{proof}
  It is a general fact that for any constructible map $f:X\to Y$,
  the union of the Zariski closures of the fibers is
  constructible. Since we did not find a suitable reference we give
  the details below.

  Up to taking affine covers one may assume that $X$ and $Y$ are affine. The
  union of the Zariski closures can be defined by
  \begin{equation}
    \{ x\in X : \forall_{P\in \CC[X]} [ (\forall_{x'\in f^{-1}(f(x))}  P(x')=0 ) \implies P(x)=0 ]\}.
  \end{equation}
  This would be constructible if one could restrict to quantifying
  over $P\in \CC[X]$ of degree bounded by some $N\in\N$. That is, if one
  could show that the Zariski closures of $f^{-1}(y)$ are
  set-theoretically cut out by some polynomials of uniformly bounded
  degree. Equivalently, it sufficies to show that the Zariski closures
  of these sets have uniformly bounded degrees, which is standard.
\end{proof}

We have the following structure theorem for weakly optimal
subvarieties.

\begin{teo}\label{main-const-thm}
  Let $d\in\NN$.
  \begin{itemize}
  \item[(i)] Let $W$ be a weakly optimal subvariety of $V$ such that
    $\delta_{\rm ws}(W)=d$. Then there exists an irreducible component
    $\Pi(d)^\circ$ of $\Pi(d)$ such that, if
    $T$ denotes the triple associated with $\Pi(d)^\circ$, then
    $W=\iota_{T}(W_T)$ for some irreducible component $W_T$ of a fiber
    of $\phi_{T,d}$.

  \item[(ii)] Let $\Pi(d)^\circ$ denote an irreducible component
    $\Pi(d)^\circ$ of $\Pi(d)$ and let
    $T=(X_\gH,X_1,X_2)$ denote the triple associated with
    $\Pi(d)^\circ$. If $W$ is an irreducible component of a fiber of
    $\phi_{T,d}$, then $\iota_T(W)$ is a weakly optimal subvariety of
    $V$ such that $\delta_{\rm ws}(W)=d$ and
    $\langle W\rangle_{\rm ws}=\pi(X_{\gH,x_2})$ for some
    $x_2\in X_2$.
  \end{itemize}
\end{teo}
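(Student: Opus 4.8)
The plan is to combine the dictionary established in Lemmas \ref{isweaksp}, \ref{findarep}, and \ref{fibers} between weakly optimal subvarieties of $V$ and elements of $\Pi(d)$, with the standard correspondence between subvarieties of $S$ and intersection components of $\pi^{-1}(V)$. Recall that, by \cite[Corollary 4.5]{DR} and the discussion in Section~\ref{wswo}, a weakly optimal subvariety $W$ of $V$ is an irreducible component of $\langle W\rangle_{\rm ws}\cap V$, and $\langle W\rangle_{\rm ws}=\pi(X_{\gH,x_2})$ for a suitable pre-weakly special $X_{\gH,x_2}$; moreover, a component $A$ of $\pi^{-1}(\langle W\rangle_{\rm ws}\cap V)=\check X_{\gH,x_2}\cap\pi^{-1}(V)$ mapping onto $W$ is then a Zariski optimal intersection component with $\delta_{\rm Zar}(A)=\delta_{\rm ws}(W)=d$ and $\langle A\rangle_{\rm Zar}=X_{\gH,x_2}$. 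These equivalences are the analogues, for the weakly special setting, of \cite[Lemma 6.14, Lemma 6.13]{DR}, and I expect them to be quotable or to require only a short verification.

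For part (i): given a weakly optimal $W$ with $\delta_{\rm ws}(W)=d$, pick the associated $A$ as above, choose $x\in A$ with $\dim_x(\check X_A\cap\pi^{-1}(V))=\dim A$, and set $p=[x,1]$. By Lemma \ref{findarep}, $(p,\check X_A)\in\Pi(d)$; let $\Pi(d)^\circ$ be an irreducible component containing it, with associated triple $T=(X_\gH,X_1,X_2)$ as furnished by Lemma \ref{fibers} (after the construction paragraph following that lemma). Then $\check X_A=\gamma\check X_{\gH,x_2}$ for some $\gamma\in\Gamma$ and $x_2\in X_2$; translating by $\gamma^{-1}$ we may take $\check X_A=\check X_{\gH,x_2}$, so $X_A=X_{\gH,x_2}$ lives in the image of $X_\gH\hookrightarrow X$ and descends through $S_T\to S$. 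Chasing the diagram
\begin{center}
  \begin{tikzcd}
    X_2 \arrow[d] & X_\gH \arrow[d, "\pi_T"] \arrow[r, hook] \arrow[l] & X \arrow[d, "\pi"] \\
    \Gamma_2\backslash X_2 & S_T \arrow[l, "\phi_T"'] \arrow[r, "\iota_T"] & S,
  \end{tikzcd}
\end{center}
the image of $A$ in $S_T$ is an irreducible analytic component of $\phi_T^{-1}(\text{pt})\cap \iota_T^{-1}(V)$, which (by Lemma \ref{isweaksp}(iii) applied inside $P_T\times\Omega_T$, together with the identification $\Pi(d)_{S_T}$ of the construction) is an irreducible component of a fiber of $\phi_{T,d}$; call it $W_T$, so $W=\iota_T(W_T)$. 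The only subtlety is matching "irreducible component of $\langle W\rangle_{\rm ws}\cap V$ in $S$" with "irreducible component of a $\phi_{T,d}$-fiber in $S_T$"; since $\iota_T$ is finite and $\langle W\rangle_{\rm ws}=\iota_T(\phi_T^{-1}(\phi_T(x_2)))$ this is a routine compatibility of irreducible components under finite morphisms.

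For part (ii): given $\Pi(d)^\circ$ with triple $T$, and $W$ an irreducible component of a fiber of $\phi_{T,d}$ over a point of $\Gamma_2\backslash X_2$, lift to $P_T\times\Omega_T$ to find $(p,Y)=(p,\check X_{\gH,x_2})\in\Pi(d)^\circ$ (here $\Omega_T$ consists precisely of the $\check X_{\gH,x_2}$, so $Y$ determines $x_2$ up to $\Gamma_\gH$). Write $p=[x,g_p]$; by Lemma \ref{isweaksp}, the component $A$ of $g_pY\cap\pi^{-1}(V)$ through $x$ is Zariski optimal with $\delta_{\rm Zar}(A)=d$, $\langle A\rangle_{\rm Zar}=X_A$ and $g_pY=\check X_A$, which is a $\Gamma$-translate of a pre-weakly special $\check X_{\gH,x_2'}$. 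Pushing $A$ to $S$ gives an irreducible component of $\pi(X_{\gH,x_2'})\cap V=\langle\pi(A)\rangle_{\rm ws}\cap V$; by the weakly-special analogue of \cite[Lemma 6.14]{DR}, $\pi(A)$ is then weakly optimal in $V$ with $\delta_{\rm ws}=d$ and weakly special closure $\pi(X_{\gH,x_2'})$. It remains to identify $\pi(A)$ with $\iota_T(W)$: the Zariski closure of the $\phi_T$-fiber through the relevant point, intersected with $\iota_T^{-1}(V)$, has $A$ (pushed to $S_T$) as a component by the dimension count in Lemma \ref{isweaksp}, and $\iota_T$ sends this to $\pi(A)$.

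\textbf{Main obstacle.} The genuinely delicate point, as in \cite{DR}, is the bookkeeping in part (ii): one must ensure that \emph{every} irreducible component of a fiber of $\phi_{T,d}$ — not merely one lying over a "generic" point — arises from some $(p,Y)\in\Pi(d)^\circ$ via Lemma \ref{isweaksp}, and that no spurious components are introduced when passing to Zariski closures of fibers (which is why $\overline{\Pi}(d)_{S_T}$ rather than $\Pi(d)_{S_T}$ appears) and then back down through the finite map $\iota_T$. Controlling this requires using that $\Pi(d)^\circ$ is a single irreducible component with a single attached triple (Lemma \ref{fibers}), so that the family of $g_pY$ is, $\Gamma$-equivariantly, the single family $\{\check X_{\gH,x_2}\}_{x_2\in X_2}$, and that the defect is constant equal to $d$ on $\Pi(d)^\circ$ — which pins down the fiber dimension of $\phi_{T,d}$ and forces the components of $\iota_T^{-1}(V)\cap(\text{fiber})$ to be exactly the pushforwards of the $A$'s. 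I expect the rest to be diagram-chasing and appeals to the lemmas already proven.
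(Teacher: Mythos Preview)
Your overall strategy matches the paper's: pass between weakly optimal subvarieties and Zariski optimal intersection components via \cite[Proposition 6.9]{DR}, then invoke Lemmas \ref{findarep}, \ref{isweaksp}, \ref{fibers}. Part (i) is essentially correct, though note that a single point $x\in A$ is not enough: you need an \emph{open subset} $U\subset A$ of points with $([x,1],\check X_A)\in\Pi(d)^\circ$, together with the $\gG(\CC)$--invariance of $\Pi(d)^\circ$ from Proposition \ref{loc-closed} (to translate by $\gamma^{-1}$ while staying in the same component), in order to conclude that $W_T=\pi_T(\gamma^{-1}A)$ actually lies in $\overline\Pi(d)_{S_T}$ and is therefore a component of a $\phi_{T,d}$--fiber rather than merely of a $V_T$--fiber.

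For part (ii) you correctly name the obstacle but do not resolve it. The claim that ``the defect is constant equal to $d$ on $\Pi(d)^\circ$, which pins down the fiber dimension of $\phi_{T,d}$'' is unjustified: the condition $\dim Y-d(p,Y)=d$ controls dimensions only at points of $\Pi(d)_{S_T}$, whereas $\phi_{T,d}$ is defined on the \emph{fiberwise Zariski closure} $\overline\Pi(d)_{S_T}$, and the closure step could a priori enlarge fibers. The paper supplies a genuine two--sided dimension argument that your sketch omits. For the upper bound $\dim W\le\dim X_1-d$: let $z$ range over the Zariski--dense set $\Pi(d)_{S_T}\cap W$; each resulting weakly optimal $W_B=\pi(B)$ has weakly special closure $\pi(X_{\gH,x_2})$ with $x_2$ \emph{independent of $z$}, so there are only finitely many such $W_B$, and the irreducible $\iota_T(W)$ must lie in one of them. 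For the lower bound: restrict to a Zariski open $U\subset\Pi(d)^\circ$ disjoint from the other components, and for $z\in U_{S_T}$ re-apply Lemma \ref{findarep} to the $B$ produced by Lemma \ref{isweaksp}, showing that an open piece of $B$ itself yields points of $\Pi(d)_T$; hence $\pi_T(B)$ is contained in a $\phi_{T,d}$--fiber, forcing generic (and then all) fibers to have dimension at least $\dim X_1-d$. Only once both bounds are established does one obtain $\iota_T(W)=W_B$; this is the step you deferred as ``diagram-chasing'', but it is the heart of the argument.
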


\begin{proof}[Proof of (i)]
  Let $A$ be an irreducible analytic component of $\pi^{-1}(W)$. By
  \cite[Proposition 6.9]{DR}, $A$ is a Zariski optimal intersection
  component of $\pi^{-1}(V)$ and so, by Theorem \ref{WAS},
  $\langle A\rangle_{\rm Zar}$ is a pre-weakly special subvariety
  $X_A$ of $X$. A simple calculation shows that
  $\pi(X_A)=\langle W\rangle_{\rm ws}$ and so
  $\delta_{\rm Zar}(A)=\delta_{\rm ws}(W)= d$.

  By Lemma \ref{findarep}, for any $p=[x,1]$ with $x\in A$ satisfying
  \begin{align}\label{dimeq}
    \dim_x(\check{X}_A\cap\pi^{-1}(V))=\dim A,
  \end{align}
  we have $(p,\check{X}_A)\in\Pi(d)$. Since (\ref{dimeq}) defines an
  open subset of $A$, there exists an open subset $U$ of $A$ and an
  irreducible component $\Pi(d)^\circ$ of
  $\Pi(d)$ such that $(p,\check{X}_A)\in\Pi(d)^\circ$ for
  all $p=[x,1]$ with $x\in U$.

  Let $T=(X_\gH,X_1,X_2)$ be the triple associated with
  $\Pi(d)^\circ$. Then $\check{X}_A=\gamma\check{X}_{\gH,x_2}$ for
  some $\gamma\in\Gamma$ and some $x_2\in X_2$. Therefore,
  $(p,\gamma^{-1}\check{X}_A)\in P_T\times\Omega_T$ for all
  $p=[\gamma^{-1}x,1]$ with $x\in U$. In fact, by Proposition
  \ref{loc-closed}, these points also belong to $\Pi(d)^\circ$ as they
  belong to the $\gG(\CC)$--orbits of the points above.

  Let $W_T$ denote the irreducible component $\pi_T(\gamma^{-1}A)$ of
  $\iota_T^{-1}(W)$. Then $W_T$ is a weakly optimal subvariety of
  $V_T=\iota_T^{-1}(V)$ and $\langle W_T\rangle_{\rm ws}$ is equal to
  $S_{\gH,x_2}=\pi_T(X_{\gH,x_2})$. In particular, $W_T$ is an
  irreducible component of $S_{\gH,x_2}\cap V_T$, which is the fiber
  of $V_T\to\phi_T(V_T)$ over the point $z_2=\pi_{T,2}(x_2)$. However,
  since $\Pi(d)_{S_T}$ contains $\pi_T(\gamma^{-1}U)$, we deduce that
  $W_T$ is contained in $\overline{\Pi}(d)_{S_T}$ and, therefore, is
  an irreducible component of the fiber of $\phi_{T,d}$ over $z_2$.
\end{proof}

\begin{proof}[Proof of (ii)]
  Let $z\in \Pi(d)_{S_T}\cap W$ (observe that $\Pi(d)_{S_T}\cap W$ is
  Zariski dense in $W$ and, in particular, is non-empty). Write
  $z=\pi_{T}(x)$ for some $x=(x_1,x_2)\in X_\gH=X_1\times X_2$ (in
  particular, $W$ is an irreducible component of the fiber of
  $\phi_{T,d}$ over $\pi_{T,2}(x_2)$) and choose $(p,Y)\in\Pi(d)_T$
  lying above $z$. Clearly, we can choose $p=[x,h]$ for some
  $h\in\gH(\CC)$, and so $hY=\check{X}_{\gH,y_2}$ for some
  $y_2\in X_2$. In fact, since $x\in hY$, we conclude that $y_2=x_2$.

  Let $B$ denote an irreducible analytic component of
  $hY\cap\pi^{-1}(V)$ containing $x$ such that
  \begin{align*}
    \dim B=\dim_{x}(hY\cap\pi^{-1}(V)).
  \end{align*}
  By Lemma \ref{isweaksp}, $B$ is a Zariski optimal intersection
  component of $\pi^{-1}(V)$ such that $\delta_{\rm Zar}(B)=d$ and,
  writing $X_B$ for the pre-weakly special subvariety
  $\langle B\rangle_{\rm Zar}$ of $X$, we have
  $hY=\check{X}_{\gH,x_2}=\check{X}_B$. By \cite[Proposition 6.9]{DR},
  $W_B=\pi(B)$ is a weakly optimal subvariety of $V$, and we see that
  $\delta_{\rm ws}(W_B)=d$ and
  $\langle W_B\rangle_{\rm ws}=\pi(X_{\gH,x_2})$.

  Since $\Pi(d)_{S_T}\cap W$ is Zariski dense in $W$, we conclude that
  $\iota_T(W)$ is contained in the union of the $W_B$ as obtained
  above. This is a finite union since each $W_B$ is a weakly optimal
  subvariety of $V$ with weakly special closure $\pi(X_{\gH,x_2})$,
  and $\pi(X_{\gH,x_2})$ does not depend on $z$. Since $\iota_{T}(W)$
  is irreducible, it is contained in one such subvariety, and we
  conclude that
  \begin{align}\label{dimupperbd}
    \dim W\leq\dim B=\dim X_1-d.
  \end{align}

  On the other hand, since $\Pi(d)$ is locally closed,
  $\Pi(d)^\circ$ contains a Zariski open subset $U$ that is disjoint from the other irreducible
  components of $\Pi(d)$. Then
  $U_T=\iota_{P_T}^{-1}(U)$ is Zariski open in $\Pi(d)_T$
  and so, since it is dense and constructible, the image $U_{S_T}$ of
  $U_T$ in $\Pi(d)_{S_T}$ contains an open subset of $\Pi(d)_{S_T}$.

  Suppose then that $z\in U_{S_T}$ and $(p,Y)\in U_T$. By Lemma
  \ref{findarep}, for any $y\in B$ such that
  \begin{align*}
    \dim B=\dim_y(hY\cap\pi^{-1}(V))
  \end{align*}
  and $p=[y,1]$, we have $(p,hY)\in\Pi(d)$. Since this condition
  defines an open subset of $B$ containing $x$, we conclude that there
  exists an open subset $U_B$ of $B$ containing $x$ such that, for any
  $y\in U_B$ and $p=[y,1]$, we have $(p,hY)\in\Pi(d)_T$. It follows
  that $\pi_{T}(B)$ is contained in the fibre of $\phi_{T,d}$ over
  $\pi_{X_2}(x_2)$.

  Therefore, since $U_{S_T}$ contains an open subset of
  $\Pi(d)_{S_T}$, the irreducible components of the fibers of
  $\phi_{T,d}$ are of dimension at least $\dim B=\dim X_1-d$. Hence,
  we conclude from (\ref{dimupperbd}) that they are pure of dimension
  $\dim X_1-d$ and this concludes the proof.
\end{proof}

Observe that Theorem \ref{main-const-thm} implies \cite[Proposition
6.3]{DR}), which establishes that the weakly optimal subvarieties of
$V$ come from finitely many triples $(X_\gH,X_1,X_2)$. The novelty in
Theorem \ref{main-const-thm} is that the fibers of the $\phi_{T,d}$
are precisely the weakly optimal subvarieties of $V$ of weakly special
defect $d$.

We recall that \cite[Theorem 7.2]{DR} established that the union
$V^{\rm an}$ of the positive dimensional weakly optimal subvarieties
of $V$ of weakly special defect at most $d$ is a Zariski closed subset
of $V$. As such, either $V^{\rm an}=V$, in which case the Zilber--Pink
conjecture can be reduced to arithmetic (see \cite[Theorem 14.3]{DR}),
or its complement in $V$ is a non-empty Zariski open subset.

\section{Ingredients from differential algebraic geometry}
\label{sec:diff-algebra}

\subsection{The main statement}\label{mainstatement}

Let $\bar X$ denote a proper smooth complex algebraic variety of
dimension $d$, and let $X\subset \bar X$ denote an open dense
subset. Let $L$ denote a very ample line bundle on $\bar X$. In this
section, the \emph{degree} $\deg(Y)$ of a subvariety $Y\subset X$ is
taken to mean the degree of the Zariski closure $\bar Y\subset\bar X$
with respect to $L$.

Let $\cF$ denote a non-singular $n$--dimensional foliation of $X$.  We
further assume for simplicity that $X$ is affine and that $\cF$ is
generated by $n$ commuting regular vector fields
$\vxi=(\xi_1,\ldots,\xi_n)$. We thus may think of regular/rational
functions $P$ on $X$ as restrictions of polynomials/rational functions on
a suitable $\A^N$, and we use $\deg(P)$ to denote the (minimal) degree
of such a representative. Similarly we denote by $\deg(\xi)$ the maximum
among the degrees of the coefficients of $\xi_1,\ldots,\xi_n$ thought
of as fields $\xi_i:X\to T\A^N$.

The assumption above can always be achieved by passing to an affine
cover of $X$. For instance, identifying $X$ as an affine subvariety
of $\A^N$ and choosing generic linear coordinates $x_1,\ldots,x_N$ on
$\A^N$, there are unique rational vector fields $\xi_1,\ldots,\xi_n$
tangent to $\cF$ of the form
\begin{equation}\label{eq:xi-coeffs}
  \xi_i = \pd{}{x_i}+\sum_{j=n+1}^N c_{ij}(x)\pd{}{x_j}
\end{equation}
where $c_{ij}$ are rational functions. Moreover $[\xi_i,\xi_j]=0$ by
the Frobenius theorem. The results below can be applied, after this
affine covering process, to any foliation $\cF$ as above.

For any algebraic subvariety $V\subset X$ and $k\in\NN$ let
\begin{align*}
  \Sigma(V,\cF,k) = \{p\in V : \dim(\cL_p\cap V) \ge k\}.
\end{align*}
Our main tool is the following.

\begin{prop}\label{prop:Sigma-bound}
  Let $V\subset X$ be an algebraic subvariety. Then
  \begin{align*}
    \deg(\Sigma(V,\cF,k)) \le P_d(\deg(X),\deg(V),\deg(\vxi))
  \end{align*}
  for some explicit polynomial $P_d$ depending on $d$. Moreover the
  equations for $\Sigma(V,\cF,k)$ can be effectively computed from the
  equations defining $X,V$ and $\vxi$.
\end{prop}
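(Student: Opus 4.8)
The plan is to reduce the statement to a bound on the degree of an incidence variety whose projection to $X$ is exactly $\Sigma(V,\cF,k)$, and then to control that degree using the jet-space formalism associated with the commuting vector fields $\vxi=(\xi_1,\dots,\xi_n)$. The key observation is that membership $p\in\Sigma(V,\cF,k)$ can be detected \emph{infinitesimally along the leaf}: the germ of $\cL_p\cap V$ at $p$ has dimension $\ge k$ iff the formal power-series solution of the flow of $\vxi$ issuing from $p$ stays inside $V$ to all orders along a $k$-dimensional sublattice of directions. Concretely, for a multi-index $\alpha\in\NN^n$ let $\vxi^\alpha=\xi_1^{\alpha_1}\cdots\xi_n^{\alpha_n}$ (well-defined since the $\xi_i$ commute), and for $f$ in the ideal $I(V)$ consider the functions $\vxi^\alpha f$ on $X$. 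The vanishing locus of all $\vxi^\alpha f$ for $f\in I(V)$ and all $\alpha$ is the union of the leaves contained in $V$; imposing only $|\alpha|\le m$ gives an approximation by an algebraic subvariety $V^{(m)}\subset X$, and a Noetherianity/compactness argument (the descending chain of these loci over $m$ stabilizes) shows $V^{(m)}$ equals the union of leaves-in-$V$ for $m$ large. The first step is therefore to set up these prolongation ideals, to record that $\deg(\vxi^\alpha f)\le \deg(f)+|\alpha|\,\deg(\vxi)$ by the product/chain rule for derivations, and to establish a \emph{stabilization bound}: $m$ may be taken to be an explicit polynomial in $d,\deg(X),\deg(V),\deg(\vxi)$. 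This last point is where I would invoke an effective Noetherianity estimate for the chain of ideals generated by successive Lie derivatives — this is the standard ``noetherianity of $D$-modules / ascending chains of differential ideals'' input, and I would cite or reprove it via a degree-counting argument (each proper inclusion strictly increases the Hilbert polynomial, which is controlled by the ambient degrees).

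The second step handles the $k$-dimensional aspect rather than just the leaf-contained case. Here I would work on $X\times \mathrm{Gr}(k,n)$ (or, more elementarily, stratify $\Sigma(V,\cF,k)$ by which coordinate $k$-plane of directions realizes the intersection dimension): a point $p$ lies in $\Sigma(V,\cF,k)$ iff there is a $k$-dimensional subspace $W\subset\CC^n$ of flow directions such that the restricted prolongations $\vxi_W^\alpha f$ vanish at $p$ for all $f\in I(V)$, $\alpha\in\NN^k$. Taking the union over the (finitely many, bounded-degree) coordinate $k$-subspaces — which suffices by a genericity argument on the choice of the linear coordinates $x_1,\dots,x_N$, exactly as in the setup preceding \eqref{eq:xi-coeffs} — exhibits $\Sigma(V,\cF,k)$ as a finite union of loci each of the ``leaf-contained'' type treated in step one, now for the sub-foliation spanned by $k$ of the $\xi_i$. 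The degree of a finite union is bounded by the sum of the degrees, so the bound from step one propagates, with $P_d$ absorbing the binomial factor $\binom{n}{k}\le 2^n\le 2^d$. Throughout, passing to the affine cover described in the text introduces only a bounded number of charts with controlled degrees, so it does not affect the polynomial dependence.

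The third step is the effectivity claim. Since at each stage the sets are cut out by explicitly written generators — the $\vxi^\alpha f$ for $f$ ranging over a Gröbner basis of $I(V)$ and $|\alpha|$ up to the explicit stabilization bound $m$ — one obtains an explicit finite system of equations for each stratum, and the union is assembled by the usual primary-decomposition / elimination bookkeeping, all of which is effective with complexity polynomial in the data. I would phrase the final $P_d$ by composing: $\deg(\vxi^\alpha f)\le \deg(V)+m\,\deg(\vxi)$ with $m=m(d,\deg X,\deg V,\deg\vxi)$ polynomial, then the vanishing locus of polynomials of that degree in $\A^N$ intersected with $X$ has degree at most $(\deg(V)+m\deg(\vxi))^{\,d}\cdot\deg(X)$ by Bézout, times $2^d$ for the union over coordinate $k$-planes.

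The main obstacle, as I see it, is precisely the \emph{effective stabilization bound} for when the finite-order prolongations $V^{(m)}$ stop shrinking — i.e. a quantitative form of ``a leaf agreeing with $V$ to sufficiently high order agrees with $V$ along its whole germ.'' This is the heart of the differential-algebraic input; everything else (degree bookkeeping via the Leibniz rule, Bézout, the $\binom{n}{k}$ union, the affine-cover reduction) is routine. I expect the paper establishes this via a Hilbert-function / ascending-chain argument on the differential ideal generated by $I(V)$ under $\xi_1,\dots,\xi_n$, and I would follow the same route, being careful that the commutativity $[\xi_i,\xi_j]=0$ is used to keep the prolongation ideals manageable and to identify their common zero locus with an honest union of $\cF$-leaves.
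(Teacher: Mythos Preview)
Your reduction in step two is incorrect. The condition $\dim(\cL_p\cap V)\ge k$ does \emph{not} imply that some $k$-dimensional subleaf through $p$ lies inside $V$. Take $n=2$ with $\cF$ generated by $\partial/\partial x_1,\partial/\partial x_2$, so that each leaf is a copy of $\CC^2$, and let $V=\{x_2=x_1^2\}$. Then $\dim(\cL_p\cap V)=1$ for every $p\in V$, so $\Sigma(V,\cF,1)=V$; yet no $1$-dimensional subleaf (here, a line of fixed direction) is contained in the parabola. Hence the locus you describe by ``$\vxi_W^\alpha f(p)=0$ for all $\alpha$ and some $k$-plane $W$'' is in general strictly smaller than $\Sigma(V,\cF,k)$. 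Passing to a finite union over coordinate $k$-planes does not repair this, since what fails is the inclusion $\Sigma(V,\cF,k)\subset\bigcup_W\Sigma(V,\cF_W,k)$ itself, not the reduction of the union to finitely many $W$.

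The paper proceeds differently. It first reduces to $V$ a complete intersection of $n-k+1$ equations $P=(P_1,\ldots,P_{n-k+1})$, and then, dually to your move, \emph{intersects} over $(n-k+1)$-dimensional subfoliations $\cF'\subset\cF$ to reduce to $k=1$, so that one is now asking whether $n':=n-k+1$ equations on an $n'$-dimensional leaf $\cL'_p$ have a non-isolated common zero at $p$. The decisive input is then the Gabrielov--Khovanskii multiplicity estimate (Theorem~\ref{thm:gk-mult}): if the common zero of $P\rest{\cL'_p}$ at $p$ is isolated, then $\mult_p P\rest{\cL'_p}<\mu$ for an explicit $\mu$ polynomial in the data. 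Thus $\Sigma(V,\cF',1)=\{p:\mult_p P\rest{\cL'_p}\ge\mu\}$, which is cut out by the vanishing of finitely many explicit multiplicity operators $\mo[\mu]P$ of controlled degree. The effective ``stabilization bound'' you correctly identify as the crux is precisely this multiplicity estimate; a generic Hilbert-function or ascending-chain argument on the differential ideal does not obviously yield polynomial bounds, and the paper relies on \cite{gk:noetherian} for this step rather than on Noetherianity alone.
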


\begin{rem}
  The explicit choice of affine coordinates is not strictly
  necessary to state the degree bound in
  Proposition~\ref{prop:Sigma-bound}. However, it is convenient for
  establishing the effective nature of our construction (i.e. to
  clarify the sense in which \emph{equations} for $\Sigma(V,\cF,k)$
  are to be effectively computed).
\end{rem}

We prove Proposition~\ref{prop:Sigma-bound} below after describing how
it relates to our concrete context involving flat connections on a
Shimura variety.

\subsection{Flat connections and foliations}
\label{sec:conn-vs-foliation}

Recall the situation described in Section \ref{SPBs}. Let $U$ be an
affine Zariski open subset of $S$ such that
$\pi_P^{-1}(U)\cong U\times\gG(\CC)$ and suppose that $x_1,\ldots,x_n$
is a system of \'etale co-ordinates on $U$. The connection
$\nabla\in\Omega^1(P,\mathfrak{g}_\CC)$ can be written, with respect
to these co-ordinates, as
\begin{align*}
  \nabla=\sum^n_{i=1}\Omega_idx_i,
\end{align*}
where $\Omega_i$ is an algebraic morphism
$U\to\mathfrak{g}_\CC$. Choosing a faithful representation
$\mathfrak{g}\to\mathfrak{gl}_M$, we may write the $\Omega_i$ as
matrices with entries given by polynomials in the $x_1,\ldots,x_n$.

The vector fields $\frac{\partial}{\partial x_i}$ on $U$ lift to
vector fields $\xi_i$ on $\pi_P^{-1}(U)$, which in our choice of
coordinates can be written
\begin{align*}
  \xi_i=\frac{\partial}{\partial x_i}+\Omega_i\cdot g.
\end{align*}
The vector fields $\xi_i$ commute by the flatness of $\nabla$. By
definition, their integral manifolds $\mathcal{L}_p$ at a point
$p\in P$ are given by (germs of) horizontal sections of $\nabla$.

\subsection{Multiplicity estimates}

In order to prove Proposition \ref{prop:Sigma-bound}, we will require
the following multiplicity estimate due to Gabrielov-Khovanskii
\cite{gk:noetherian}. 

\begin{teo}[\protect{\cite[Theorem~1]{gk:noetherian}}]\label{thm:gk-mult}
  Let $P=(P_1,\ldots,P_n)\in\OOO(X)^n$ and let $p\in X$. Suppose that
  the restriction of $P$ to the leaf ${\cL_p}$ of $\cF$
  through $p$ has an isolated zero at $p$. Then
  \begin{align*}
    \mult_p P\rest{\cL_p} < f_d(\deg(\vxi), \deg(P)),
  \end{align*}
  for some explicit polynomial $f_d$.
\end{teo}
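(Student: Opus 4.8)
The plan is to reduce the multiplicity to a finite-jet computation along the leaf, settle the one-dimensional case by a stabilising chain of differential ideals, and bootstrap to general $n$ by a polar-curve induction. First, since the $\xi_i$ commute and span the leaf, I would choose flow coordinates $(t_1,\dots,t_n)$ on $\cL_p$ centred at $p$ with $\partial/\partial t_i=\xi_i$ along $\cL_p$. Then the Taylor coefficients at $p$ of the germ $P\rest{\cL_p}$ are exactly the numbers $(\xi^\alpha P_j)(p)$, so $\mult_pP\rest{\cL_p}=\dim_\CC\OOO_{\cL_p,p}/(P_1,\dots,P_n)$ is determined by these values alone. The key point is that each $\xi^\alpha P_j$ is a \emph{regular function on $X$} with $\deg(\xi^\alpha P_j)\le\deg(P_j)+|\alpha|(\deg(\vxi)-1)$: differentiation raises degrees only linearly, so the whole problem lives among polynomials of controlled degree.

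Next I would consider the ascending chain of ideals $I_0\subseteq I_1\subseteq\cdots$ in $\OOO(X)$, with $I_m=(\xi^\alpha P_j:|\alpha|\le m,\ 1\le j\le n)$, whose generators have degree $\le\deg(P)+m\,\deg(\vxi)$. Effective Noetherianity --- explicit bounds on the length of ascending chains of polynomial ideals with degree-bounded generators, via Gr\"obner degrees (Seidenberg, Giusti--Heintz, and subsequent work) --- yields an explicit polynomial $N=N(d,\deg(\vxi),\deg(P))$ after which the chain is constant, with $I_N$ then automatically stable under every $\xi_i$. For $n=1$ this already finishes the proof: if the vanishing order $k$ of $P\rest{\cL_p}$ were $>N$, then $\xi^jP(p)=0$ for all $j\le N$, hence, by $\xi$-stability of $I_N$ (which is generated by these functions), $\xi^jP(p)=0$ for all $j$, forcing $P\rest{\cL_p}\equiv0$ and contradicting that $p$ is an isolated zero; so $k\le N$.

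For general $n$ I would induct via a generic-deformation and polar-curve argument. Perturbing to $(P_1-\epsilon_1,\dots,P_n-\epsilon_n)$ for generic small $\epsilon$, the multiplicity at $p$ becomes the number of nearby common zeros on $\cL_p$; freezing $\epsilon_1,\dots,\epsilon_{n-1}$, the locus $C=\{q\in\cL_p:P_j(q)=\epsilon_j,\ j<n\}$ is generically a smooth curve tangent to the \emph{polynomial} vector field $\xi=\sum_iA_i\xi_i$ on $X$, where the $A_i$ are the signed $(n-1)\times(n-1)$ minors of $(\xi_iP_j)_{1\le i\le n,\,1\le j<n}$; since $\xi P_j=0$ for $j<n$ (a determinant with a repeated column), $C$ lies in the prescribed fibre, and $\deg(\xi)$ is polynomial in $d,\deg(\vxi),\deg(P)$. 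One then wants to factor the multiplicity as (a bound on the number of fibres clustering at $p$, supplied by the $(n-1)$-variable case of the theorem applied to $(P_1,\dots,P_{n-1})$ on leaves of the subfoliation of $\xi_1,\dots,\xi_{n-1}$) times (the vanishing order of $P_n$ along the trajectory $C$ of $\xi$, supplied by the $n=1$ estimate applied to the pair $(P_n,\xi)$); propagating the degree bounds through the induction then produces the explicit $f_d$.

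The main obstacle is precisely this last step: arranging the deformation so that multiplicities \emph{genuinely multiply} along the flag $\cdots\supset\{P_{n-1}=\epsilon_{n-1}\}\supset\cdots$ rather than merely bound one another, which forces one to control the loci where transversality degenerates (simultaneous vanishing of the minors $A_i$, or singular $C$). This is the technical heart of Gabrielov--Khovanskii, where a complex analogue of Rolle's theorem --- a winding-number/monodromy count along the flow directions --- does the essential work. By contrast, the effective-Noetherianity input above is routine, though one must take care that $N$ is bounded by an \emph{explicit} polynomial and not merely known to be finite.
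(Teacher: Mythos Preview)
The paper does not prove this statement; it is quoted verbatim as Theorem~1 of Gabrielov--Khovanskii \cite{gk:noetherian} and used as a black box. So there is no ``paper's own proof'' to compare against, and your proposal should be judged on whether it actually recovers the cited result.

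There is a genuine gap in your $n=1$ step. You invoke ``effective Noetherianity --- explicit bounds on the length of ascending chains of polynomial ideals with degree-bounded generators'' and assert that this produces a stabilisation index $N$ that is \emph{polynomial} in $d,\deg(\vxi),\deg(P)$. It does not. For an ascending chain $I_0\subsetneq I_1\subsetneq\cdots$ in $\CC[x_1,\dots,x_N]$ with $I_m$ generated in degree $\le c_1+c_2 m$, the known uniform bounds (Seidenberg, Moreno~Soc\'ias, and successors) are of Ackermann type in the number of variables, and examples show this is essentially sharp. Your chain argument therefore yields an \emph{explicit} but highly non-polynomial bound, whereas the theorem asserts a polynomial one; obtaining the polynomial bound is precisely the nontrivial content of \cite{gk:noetherian}, and it is not a consequence of generic effective Noetherianity. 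You flag this concern in your last sentence (``one must take care that $N$ is bounded by an explicit polynomial''), but the tools you cite do not deliver it.

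Your inductive scheme for general $n$ is in the right spirit --- deformation, polar curves tangent to a vector field built from minors of $(\xi_iP_j)$, and a complex Rolle-type count along the flow --- and indeed mirrors the architecture of the Gabrielov--Khovanskii argument. But as you yourself note, making the multiplicities \emph{multiply} across the flag, and controlling the degeneracy loci where the minors vanish, is exactly the hard part; your sketch stops short of it. Combined with the gap in the base case, the proposal does not constitute a proof of the theorem as stated.
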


We will use this result to characterize the locus of points where the
intersection of $\cL_p$ with the vanishing locus of $P$ is positive-dimensional. For this purpose, we are interested in expressing the
condition that a tuple of functions admits a common zero of
multiplicity at least $k$ by means of differential algebraic
conditions.

Let $F=(F_1,\ldots,F_n)$ denote an $n$-tuple of holomorphic functions
in some domain $\Omega\subset\CC^n$. The problem above is addressed in
\cite{me:mult-ops} by means of a collection $\{M^\alpha\}$ of
``multiplicity operators'' of order $k$. These are polynomial partial
differential operators of order $k$, i.e. polynomial combinations of
$F_1,\ldots,F_n$ and their first $k$ derivatives. We will usually
denote a multiplicity operator of order $k$ by $\mo$ and write
$\mo_pF$ for $[\mo(F)](p)$.

\begin{prop}[\protect{\cite[Proposition~5]{me:mult-ops}}]\label{prop:mo-basic}
  We have $\mult_p F>k$ if and only if $\mo_pF=0$ for all multiplicity
  operators of order $k$.
\end{prop}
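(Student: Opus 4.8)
The plan is to show that the truth value of the inequality $\mult_pF>k$ depends only on the $k$-jet of $F$ at $p$, that the locus of such $k$-jets is Zariski closed, and that the multiplicity operators of order $k$ may be taken to be a system of defining equations for that locus, pulled back along the map $F\mapsto\big(D^\nu F_i(p)\big)_{|\nu|\le k}$. By translating we may assume $p=0$; write $\OOO=\OOO_{\CC^n,0}$ for the local ring, $\mathfrak m$ for its maximal ideal, and $I=(F_1,\dots,F_n)\subset\OOO$, so that $\mult_0F=\dim_\CC\OOO/I\in\NN\cup\{\infty\}$ by definition. If $F(0)\neq0$ then $\mult_0F=0$ and the multiplication map $\Phi$ introduced below is surjective, so the claim is immediate; we therefore assume $F(0)=0$.

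The first and essential step is the purely algebraic lemma
\begin{align*}
  \mult_0F\le k\qquad\Longleftrightarrow\qquad\dim_\CC\big(\OOO/(I+\mathfrak m^{k+1})\big)\le k.
\end{align*}
For the forward implication, $\mult_0F=\ell\le k<\infty$ makes $\OOO/I$ Artinian of length $\ell$, so $\mathfrak m^\ell\subseteq I$, hence $\mathfrak m^{k+1}\subseteq I$ and the right-hand dimension equals $\ell$. For the converse, the chain $\OOO=I+\mathfrak m^0\supseteq I+\mathfrak m^1\supseteq\cdots\supseteq I+\mathfrak m^{k+1}$ has $k+1$ successive quotients whose $\CC$-dimensions sum to $\dim_\CC\OOO/(I+\mathfrak m^{k+1})\le k$, so $I+\mathfrak m^{j}=I+\mathfrak m^{j+1}$ for some $j\le k$; Nakayama's lemma applied to the finitely generated $\OOO$-module $(\mathfrak m^{j}+I)/I$ then gives $\mathfrak m^{j}\subseteq I$, so $I$ is $\mathfrak m$-primary and $\OOO/I=\OOO/(I+\mathfrak m^{k+1})$. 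The same argument shows that $\mult_0F=\infty$ forces the right-hand dimension to exceed $k$, so the lemma holds verbatim (with both sides false) in the non-isolated case. The point is that the right-hand side depends only on the images $\bar F_1,\dots,\bar F_n$ of the $F_i$ in the finite-dimensional $\CC$-algebra $R:=\OOO/\mathfrak m^{k+1}$, i.e. only on the derivatives $D^\nu F_i(0)$ with $|\nu|\le k$.

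Next I would recast this as a rank condition. In $R$ the ideal $(I+\mathfrak m^{k+1})/\mathfrak m^{k+1}$ is the image of the $\CC$-linear multiplication map $\Phi\colon R^n\to R$, $(a_1,\dots,a_n)\mapsto\sum_i a_i\bar F_i$; with $N=\dim_\CC R=\binom{n+k}{n}$ (and $N\ge k+1$), the lemma reads $\mult_0F\le k\iff\operatorname{rank}\Phi\ge N-k$. In the monomial basis of $R$ the entries of the matrix of $\Phi$ are $\CC$-linear in the coefficients of the $\bar F_i$, hence in the $D^\nu F_i(0)$ with $|\nu|\le k$; therefore each $(N-k)\times(N-k)$ minor of this matrix is a polynomial in $F_1,\dots,F_n$ and their partial derivatives of order $\le k$, i.e. an operator of order $k$. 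Declaring these minors to be the multiplicity operators $\mo$ of order $k$, we obtain: $\mult_0F\le k$ iff $\operatorname{rank}\Phi\ge N-k$ iff some $(N-k)$-minor of $\Phi$ is nonzero at $0$ iff $\mo_0F\neq0$ for some multiplicity operator of order $k$. Negating and undoing the translation yields the proposition; to recover the explicit operators of \cite{me:mult-ops} one then checks that the construction there cuts out the same Zariski closed jet locus.

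I expect the only substantial obstacle to be the lemma of the second paragraph, and in particular the feature that makes the statement nonobvious: a priori one might think that deciding $\mult_pF>k$ requires the jet of $F$ to order $\mult_pF$, which is unbounded, whereas in fact the $\mathfrak m$-adic chain $\{I+\mathfrak m^{j}\}_j$ must already stabilize within its first $k+1$ steps whenever the colength is at most $k$. It is precisely this stabilization that confines the relevant information to the order-$k$ jet, and hence that lets operators of order $k$, rather than of some far larger order, detect $\mult_pF>k$. The remaining ingredients, namely the determinantal description of a low-rank locus, the polynomial dependence of the entries of $\Phi$ on the derivatives of $F$, and the fact that $\mo_pF$ is by construction a polynomial in those derivatives, are routine.
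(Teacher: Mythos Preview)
The paper does not prove this proposition: it is quoted verbatim from the external reference \cite{me:mult-ops}, so there is no in-paper argument to compare against. Your task is therefore to supply a proof consistent with how the operators are described and used here, and on that score your argument is sound. The key lemma
\[
\mult_0F\le k\ \Longleftrightarrow\ \dim_\CC\bigl(\OOO/(I+\mathfrak m^{k+1})\bigr)\le k
\]
is correct, and your proof of it via the stabilization of the chain $\{I+\mathfrak m^{j}\}_{j}$ and Nakayama is exactly the right mechanism; as you note, this is precisely what pins the problem down to the $k$-jet. The reformulation as a rank condition on the multiplication map $\Phi:R^n\to R$ in $R=\OOO/\mathfrak m^{k+1}$, and hence as the nonvanishing of some $(N-k)\times(N-k)$ minor, is also correct.

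The only point to flag is your closing sentence. You construct \emph{a} family of order-$k$ differential operators (the minors of $\Phi$) with the stated property, and then say one ``checks'' that the operators of \cite{me:mult-ops} cut out the same jet locus. For the purposes of this paper that check is unnecessary: the paper's description of $\mo$ as ``expanding a determinant, of size polynomial in $k$, with entries defined in terms of $P$ and its derivatives up to order $k$'' (see Lemma~\ref{lem:mo-complexity}) matches your minors on the nose, and in any case all that is used downstream is (a) the equivalence in Proposition~\ref{prop:mo-basic} and (b) the polynomial degree bound of Lemma~\ref{lem:mo-complexity}, both of which your construction delivers. So rather than leaving a dangling verification, you could simply \emph{define} the multiplicity operators to be these minors and observe that this is the construction of \cite{me:mult-ops}; then nothing remains to check.
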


For every $p\in X$, we let $\phi_p:B\to\cL_p$ denote germ of a
holomorphic map, for some open ball $B\subset\CC^n$ centered at the
origin, satisfying $\partial\phi_p/\partial x_i=\xi_i$ for
$i=1,\ldots,n$. We refer to this map as the $\vxi_i$--chart on
$\cL_p$. When $P=(P_1,\ldots,P_n)\in\OOO(X)^n$ we may apply the
multiplicity operator $\mo$ to $P$ by evaluating the derivatives along
$\xi_{1},\ldots,\xi_{n}$, which amounts to computing, for each point
$p\in X$, the multiplicity operator of $P\rest\cL_p$ in the
$\vxi_i$--chart.

\begin{lem}\label{lem:mo-complexity}
  For any multiplicity operator $\mo$ of order $k$ we have
  \begin{align*}
    \deg(\mo P) \le f_d(\deg(P),\deg(\vxi),k)
  \end{align*}
  for some explicit polynomial $f$.
\end{lem}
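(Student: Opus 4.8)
The plan is to bound the degree of $\mo P$ by tracking how derivations along the vector fields $\xi_i$ affect degrees, and then observing that a multiplicity operator of order $k$ is a polynomial of controlled degree in $P$ and its iterated $\xi$-derivatives of order at most $k$. First I would recall from \cite{me:mult-ops} (Proposition~\ref{prop:mo-basic} and the surrounding discussion) that each multiplicity operator $\mo$ of order $k$ is a fixed polynomial expression in the functions $P_1,\dots,P_n$ and their partial derivatives $\partial^\alpha P_j/\partial x^\alpha$ for multi-indices $|\alpha|\le k$, taken in the $\vxi$--chart; crucially, this polynomial has degree and number of terms depending only on $n$ and $k$ — in particular bounded by some explicit function of $d$ and $k$, since $n\le d$. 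Applying $\mo$ to $P$ on $X$ means replacing the chart-derivatives $\partial/\partial x_i$ by the derivations $\xi_i$, so $\mo P$ becomes a polynomial (of $d,k$-bounded degree and term-count) in the functions $\xi^\alpha P_j := \xi_{i_1}\cdots\xi_{i_m} P_j$ for $m\le k$.

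The key step is then the degree bound on a single iterated derivative $\xi^\alpha P_j$. Recall from the coordinate description (cf. \eqref{eq:xi-coeffs} and Section~\ref{sec:conn-vs-foliation}) that, writing things as polynomials on the ambient $\A^N$, applying one $\xi_i$ to a polynomial $Q$ of degree $e$ produces $\partial Q/\partial x_i + \sum_j c_{ij}\,\partial Q/\partial x_j$; if we first clear denominators so the $c_{ij}$ are replaced by numerators of degree $\le\deg(\vxi)$ over a common denominator, each application of $\xi_i$ raises the (numerator) degree by at most $\deg(\vxi)-1+\text{(something linear)}$, i.e.\ by an additive constant $C=C(\deg(\vxi))$. (One must keep track of the common denominator accumulated from the $c_{ij}$: after $m$ applications the denominator has degree $O(m\deg(\vxi))$, and the whole $\mo P$ has a common denominator of degree $O(k\deg(\vxi))$ which does not affect the \emph{degree} of the resulting morphism on $X$ once one argues it divides out, or one simply states the bound for the numerator.) Iterating $m\le k$ times gives $\deg(\xi^\alpha P_j)\le \deg(P) + C\cdot k$, polynomial in $\deg(P),\deg(\vxi),k$.

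Combining the two steps: $\mo P$ is a polynomial of $(d,k)$-bounded degree in quantities each of degree $\le \deg(P)+C(\deg(\vxi))\cdot k$, hence
\begin{align*}
  \deg(\mo P)\le g_1(d,k)\cdot\bigl(\deg(P)+g_2(\deg(\vxi))\cdot k\bigr)\le f_d(\deg(P),\deg(\vxi),k)
\end{align*}
for an explicit polynomial $f_d$, which is the claim. The main obstacle I anticipate is purely bookkeeping rather than conceptual: being careful about the rational (as opposed to polynomial) nature of the $\xi_i$ after the generic-linear-coordinate normalization, so that one tracks both numerator degree growth and the accumulated common denominator, and making sure the ``explicit polynomial'' from \cite{me:mult-ops} bounding the structure of $\mo$ is invoked with the right dependence on $n\le d$. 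None of this requires a new idea; it is a composition of the Frobenius-normal-form estimate for $\xi$-derivatives with the a priori shape of the multiplicity operators.
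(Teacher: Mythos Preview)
Your proposal is correct and follows essentially the same approach as the paper. The paper's proof is even more terse: it simply notes that $\mo$ is defined by expanding a determinant of size polynomial in $k$ whose entries are $P$ and its $\vxi_i$--derivatives up to order $k$, and declares the bound immediate; your argument spells out the two ingredients (bounded-degree polynomial structure of $\mo$, additive degree growth under each $\xi_i$) that the paper leaves implicit.
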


\begin{proof}
  This follows easily since $\mo$ is defined by expanding a
  determinant, of size polynomial in $k$, with entries defined in
  terms of $P$ and its $\vxi_i$--derivatives up to order $k$.
\end{proof}

\subsection{Proof of Proposition~\ref{prop:Sigma-bound}}

Recall the situation described in Section \ref{mainstatement}. It is
classical that $V\subset X$ is then cut out by polynomials of degree
at most $\deg(V)$. Denote the set of these polynomials by $\cP$. Then
we have
\begin{align*}
  \Sigma(V,\cF,k) = \bigcap_{\substack{\cP'\subset\cP\\ \#\cP'=n-k+1}} \Sigma(V(\cP'),\cF,k).
\end{align*}
Indeed, the inclusion $\subset$ is obvious. For the other inclusion,
suppose $p\not\in\Sigma(V,\cF,k)$ so that $\dim(V\cap\cL_p)<k$. In
particular $\cL_p\not\subset V$, so there exists an equation
$P_1\in\cP$ not identically zero on $\cL_p$. If $n-k>1$ then,
similarly, no component of the intersection of $\cL_p$ with the
vanishing locus of $P_1$ is contained in $V$, so there exists $P_2$
not vanishing on any of these components. Reiterating $n-k$ steps of
this form, we obtain $\cP'=\{P_1,\ldots,P_{n-k+1}\}$ with
$\dim(V(\cP')\cap\cL_p)=k-1$, so $p\not\in\Sigma(V(\cP'),\cF,k)$.

By Bezout's Theorem, $\deg(\Sigma(V,\cF,k))$ is bounded by a
polynomial in the maximum of the $\deg(\Sigma(V(\cP'),\cF,k))$ for
$\cP'$ as above. Hence, it is enough to prove
Proposition~\ref{prop:Sigma-bound} assuming that $V$ is a complete
intersection defined by equations $P=(P_1,\ldots,P_{n-k+1})$.

We now make a similar reduction involving the foliation $\cF$. Namely,
\begin{align}\label{eq:SigmaFvSigmaF'}
  \Sigma(V,\cF,k) = \bigcap_{\substack{\cF'\subset\cF\\ \dim\cF'=n-k+1}} \Sigma(V,\cF',1),
\end{align}
where the intersection is taken over foliations $\cF'$ generated by
linear combinations of $n-k+1$ of the vector fields comprising
$\vxi$. Again the inclusion $\subset$ is obvious. For the other
inclusion, suppose $p\not\in\Sigma(V,\cF,k)$, so that
$\dim(V\cap\cL_p)<k$. Intersecting with linear hyperplanes passing
through the origin in the $\vxi$--chart on $\cL_p$, we find, similarly
to the previous step, $k-1$ such hyperplanes defining a subleaf
$\cL_p'$ with $\dim(V\cap\cL_p')=0$. Noting that $\cL_p'$ is a leaf of
a subfoliation $\cF'$ as above finishes the proof.

By Bezout's Theorem, as above, it suffices, replacing $\cF$ by the
$\cF'$, to prove Proposition~\ref{prop:Sigma-bound} in the case $k=1$.
In this case we have
\begin{align*}
  \Sigma(V,\cF,1) &= \{ p\in V : \dim(\cL_p\cap V)\ge1\} \\&= \{p\in V:\mult_p P\rest{\cL_p}=\infty\}
                                                             = \{ p\in V:\mult_p P_\rest{\cL_p}\ge\mu \},
\end{align*}
where $\mu$ is the multiplicity bound of
Theorem~\ref{thm:gk-mult}. Finally, according to
Proposition~\ref{prop:mo-basic}, the right-hand side is the zero locus
of all multiplicity operators $\mo[\mu]_pP$ taken with respect to the
foliation $\cF$. The degrees of all of these polynomials are bounded
by Lemma~\ref{lem:mo-complexity}. Applying Bezout's Theorem concludes
the proof of the degree bound.

Finally, we indicate how to effectively obtain a system of equations
for $\Sigma(V,\cF,k)$. The only step above which isn't effective
a priori is (\ref{eq:SigmaFvSigmaF'}), where one intersects an
infinite collection of equations. To deal with this, we first note
that it would suffice to consider $\cF'$ in some open-dense subset of
the Grassmannian of $n-k+1$-dimensional subsets of the span of
$\vxi$. We can generate such $\cF'(c)$ as the span of
$\xi'_1(c),\ldots,\xi'_{n-k+1}(c)$ with
\begin{equation}
  \xi'_i = \xi_i+\sum_{j=n-k+2}^n c_{ij}\xi_j
\end{equation}
and $(c_{ij})\in\A^{(n-k+1)(k-1)}$. Repeating the construction above
with $\cF'(c)$ and treating $c_{ij}$ as independent variables, we
obtain a system of equations $E_1(x,c)=\cdots=E_Q(x,c)=0$ such that
$x\in\Sigma(V,\cF,k)$ if and only if the equations vanish at $(x,c)$
for every $c$. It is then clear that $\Sigma(V,\cF,k)$ is cut out by
the coefficients of $E_1,\ldots,E_Q$ viewed as polynomials in the
$c$-variables.

\section{Effective bounds for degrees of weakly optimal subvarieties}

Recall the situation described in Section \ref{mainconst}. For $i=1,\ldots,n$, let $U_i$ denote a Zariski open subvariety of $S$ such
that $\cup_{i=1}^nU_i=S$ and $\pi_P^{-1}(U_i)\cong U_i\times\gG(\CC)$
is trivial. Furthermore, we can and do assume that, under the
embedding $S\to\PP^N$ given by $L^{\otimes k_\Gamma}_\Gamma$, each $U_i$ is contained in
one of the standard affine charts. Similarly, let $\Omega_i$ denote
Zariski open subvarieties of $\Omega$ (already considered as a
subvariety of some $\PP^{M}$), each contained in a standard affine
chart, such that $\Omega=\cup_{i=1}^n\Omega_i$. Fix an embedding of
$\gG$ into some affine space $\AAA^L$. For $i,j=1,\ldots,n$, let
$U_{ij}$ denote the Zariski open subset
$U_i\times\gG(\CC)\times \Omega_j$ of $P\times\Omega$. The following is an immediate corollary of Proposition \ref{loc-closed}.

\begin{lem}\label{deginput}
  Let $d\in\NN$ and let $\Pi(d)^{\circ}$ denote an irreducible
  component of $\Pi(d)$. For any
  $i,j\in\{1,\ldots,n\}$, the degree of the Zariski closure of the
  locally closed subset $U_{ij}\cap\Pi(d)^\circ$ of
  $P\times\Omega$, considered as a subvariety of $\AAA^{N+L+M}$, is bounded by $f_S(\deg_{L^{\otimes k_\Gamma}_\Gamma}(V))$ for some polynomial $f_S$ depending only on $S$.
\end{lem}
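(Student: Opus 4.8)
The plan is to derive the stated affine degree bound directly from Proposition~\ref{loc-closed} by translating the notion of \emph{complexity} used there --- which is measured via the Segre product of the three chosen coordinate systems on $U\times\gG(\CC)\times\Omega$ --- into an ordinary degree in the single projective space $\PP^{N+L+M}\supset\AAA^{N+L+M}$. First I would apply Proposition~\ref{loc-closed} with the trivializing affine open $U$ of the introduction taken to be $U_i$ (nothing in the construction of $\Pi(d)$ or in the resulting bound depends on this choice), obtaining a presentation $\Pi(d)=Y\setminus Z$ with $Y,Z$ Zariski closed in $U_i\times\gG(\CC)\times\Omega\subseteq\PP^N\times\PP^L\times\PP^M$ and $\deg_{\mathrm{Segre}}(Y)+\deg_{\mathrm{Segre}}(Z)\le f_S(\deg(V))$. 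Since the irreducible locally closed set $\Pi(d)^\circ$ is disjoint from $Z$, maximality of irreducible components shows that its Zariski closure $W$ in $\PP^N\times\PP^L\times\PP^M$ is an irreducible component of $Y$ (an irreducible component of $Y\setminus Z$ cannot, upon taking closure, lie inside another such closure, nor inside a component of $Y\cap Z$). In particular $\deg_{\mathrm{Segre}}(W)\le\deg_{\mathrm{Segre}}(Y)\le f_S(\deg(V))$.

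It then remains to compare $\deg_{\mathrm{Segre}}(W)$ with the degree of the Zariski closure of $U_{ij}\cap\Pi(d)^\circ$ in $\AAA^{N+L+M}$. Now $U_{ij}=U_i\times\gG(\CC)\times\Omega_j$ lies in the product $A$ of the three standard affine charts containing $U_i$, $\gG(\CC)$ and $\Omega_j$, and concatenating affine coordinates identifies $A$ with $\AAA^{N+L+M}$; the point is that this identification is the restriction to $A$ of a coordinate --- hence linear --- projection $\Phi\colon\PP^D\dashrightarrow\PP^{N+L+M}$, where $\PP^N\times\PP^L\times\PP^M\hookrightarrow\PP^D$ is the Segre embedding (on the chart in question the Segre monomials $X_0Y_0Z_0$, $X_iY_0Z_0$, $X_0Y_jZ_0$, $X_0Y_0Z_k$ restrict to $1,x_i,y_j,z_k$, while the remaining monomials span the centre of $\Phi$). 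We may assume $U_{ij}\cap\Pi(d)^\circ\ne\emptyset$; it is then a dense open subset of the irreducible variety $W$, so its Zariski closure in $\PP^{N+L+M}$ is $\overline{\Phi(W)}$, and $W$ is not contained in the centre of $\Phi$ since it meets the affine chart. As a linear projection cannot increase the degree of a subvariety,
\begin{align*}
  \deg_{\PP^{N+L+M}}\big(\overline{U_{ij}\cap\Pi(d)^\circ}\big)
  &= \deg_{\PP^{N+L+M}}\overline{\Phi(W)} \le \deg_{\PP^D}(W) \\
  &= \deg_{\mathrm{Segre}}(W) \le f_S(\deg(V)),
\end{align*}
as required. (Explicit bounded-degree equations for this closure can be extracted by dehomogenizing and eliminating the equations for $Y$ and $Z$ produced by Proposition~\ref{loc-closed}, although only the degree bound is needed here.)

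This lemma is essentially a bookkeeping exercise resting on Proposition~\ref{loc-closed}; the only step deserving attention is reconciling the multigraded (Segre) degree appearing there with the ordinary degree in a single projective space demanded in the statement, which, once phrased as a coordinate projection from the Segre variety, reduces to the classical fact that linear projections do not increase degrees.
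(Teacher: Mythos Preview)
Your argument is correct and follows the same route as the paper, which simply declares the lemma an immediate corollary of Proposition~\ref{loc-closed} without further comment. You have merely supplied the one step the paper glosses over --- the comparison between the Segre-product degree used to define complexity and the ordinary degree in $\AAA^{N+L+M}$ --- via the standard linear-projection argument.
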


The main result of this section is the following.

\begin{teo}\label{degreebound}
  Let $d\in\NN$ and let $W$ be a weakly optimal subvariety of $V$ such
  that $\delta_{\rm ws}(W)=d$. Then 
  \begin{align*}
\deg_{L^{\otimes k_\Gamma}_\Gamma}(W)\leq f_S(\deg_{L^{\otimes k_\Gamma}_\Gamma}(V)).
\end{align*}
\end{teo}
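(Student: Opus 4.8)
The plan is to derive Theorem~\ref{degreebound} by combining the structure theorem (Theorem~\ref{main-const-thm}) with the effective degree bounds on $\Pi(d)$ recorded in Lemma~\ref{deginput}, pushing these bounds through the diagram of algebraic morphisms relating $P_T\times\Omega_T$, $S_T$, and $S$. The key point is that every geometric operation separating $\Pi(d)^\circ$ from the weakly optimal subvariety $W$ is an algebraic morphism of bounded complexity, and the degree of a constructible set can only grow in a controlled (polynomial) way under such morphisms, fiberwise intersections, Zariski closures, and pushforwards.

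First I would fix $W$ a weakly optimal subvariety of $V$ with $\delta_{\rm ws}(W)=d$ and invoke Theorem~\ref{main-const-thm}(i) to obtain an irreducible component $\Pi(d)^\circ$ of $\Pi(d)$, with associated triple $T=(X_\gH,X_1,X_2)$, such that $W=\iota_T(W_T)$ for some irreducible component $W_T$ of a fiber of $\phi_{T,d}$. By Lemma~\ref{deginput}, the Zariski closure of $\Pi(d)^\circ$ inside $P\times\Omega\subset\AAA^{N+L+M}$ has degree bounded by $f_S(\deg(V))$. Next I would transport this bound through $\iota_{P_T}$: since $\iota_{P_T}:P_T\times\Omega_T\to P\times\Omega$ is an injective algebraic morphism whose complexity depends only on $S$ (via \cite[Section~3]{milne:connected}), the preimage $\Pi(d)_T=\iota_{P_T}^{-1}(\Pi(d)^\circ)$ has degree bounded polynomially in $\deg(V)$. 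Then I would push forward along the natural map $P_T\times\Omega_T\to S_T$ to bound $\deg(\Pi(d)_{S_T})$, again polynomially, using that the degree of the image of a constructible set under a morphism of bounded complexity is controlled (this is the same standard elimination-theoretic input used in the proof of Proposition~\ref{loc-closed} and Lemma~\ref{fibers}).

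The next step is to bound the degrees of $\overline{\Pi}(d)_{S_T}$ and then of the individual fibers of $\phi_{T,d}$. Here I would appeal to the argument already given in the proof of Lemma~\ref{fiberwise}: the union of Zariski closures of fibers of a constructible map is constructible precisely because those Zariski closures have uniformly bounded degree, and that bound is again polynomial in the complexity of the data, hence in $\deg(V)$. Since the fibers of $\phi_{T,d}$ are obtained by intersecting $\overline{\Pi}(d)_{S_T}$ with a fiber of $\phi_T$ (i.e. a preimage of a point under an algebraic morphism), Bézout's theorem bounds the degree of each fiber, and hence of its irreducible component $W_T$, by $f_S(\deg(V))$. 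Finally, since $\iota_T:S_T\to S$ is a finite morphism of degree depending only on $S$ (it is the closed morphism of \cite[Facts~2.6]{pink:published}), we have $\deg_{L^{\otimes k_\Gamma}_\Gamma}(W)=\deg_{L^{\otimes k_\Gamma}_\Gamma}(\iota_T(W_T))\le C_S\cdot\deg(W_T)$ for a constant $C_S$, with the appropriate care that $\iota_T^*L_\Gamma$ differs from $L_{\Gamma_\gH}$ by a bounded twist; absorbing all such constants into $f_S$ yields the claimed bound.

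The main obstacle I expect is bookkeeping the degrees consistently across three different projective/affine embeddings — the Segre embedding of $U\times\gG(\CC)\times\Omega$, the intrinsic embedding of $S_T$ via its own Baily--Borel bundle, and the embedding of $S$ — and verifying that the comparison maps between these (the restriction of $L_\Gamma$ to $S_T$, the finiteness of $\iota_T$, the behaviour of $\iota_{P_T}$ on Chow coordinates) all have complexity bounded solely in terms of $S$, independently of $V$ and of which of the finitely many triples $T$ occurs. This is essentially routine given that there are only finitely many triples $T$ (by Remark~\ref{Dbound} and Lemma~\ref{fibers}), so one may take the maximum of the finitely many constants involved, but it requires care to state cleanly; none of it involves new ideas beyond Bézout, elimination theory, and the already-established Lemma~\ref{deginput}.
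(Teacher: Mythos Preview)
Your overall architecture---apply Theorem~\ref{main-const-thm}(i) to locate $W$ as a fiber in a family indexed by some $\Pi(d)^\circ$, then push the degree bound on $\Pi(d)^\circ$ from Lemma~\ref{deginput} down to $W$---matches the paper's. The gap is in how you control the constants that depend on the triple $T$. Your justification, ``there are only finitely many triples $T$ (by Remark~\ref{Dbound} and Lemma~\ref{fibers}), so one may take the maximum of the finitely many constants involved'', is not correct: Remark~\ref{Dbound} gives finitely many $\gG(\RR)$-\emph{orbits} of pre-weakly special subvarieties, but the triples $T=(X_\gH,X_1,X_2)$ depend on the $\QQ$-structure of $\gH$, and there are countably many Shimura subdata. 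For a fixed $V$ only finitely many $T$ occur, but which ones---and hence the complexity of $\iota_{P_T}$, $\phi_T$, $\iota_T$, and the degree of the fibers $\phi_T^{-1}(z_2)$ entering your B\'ezout step---depends on $V$ in a way you have not bounded polynomially in $\deg(V)$.

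The paper circumvents exactly this issue by two devices you did not invoke. First, it never passes through $P_T$: it projects $\Pi(d)^\circ$ to $S$ to get $V_0$, and then uses the comparison of automorphic line bundles from \cite[Lemma~4.1]{DJK} and \cite[Proposition~5.3.10]{KY:AO} to pass between degrees on $S$ and $S_T$ with the universal constant $r_\gG^3$ (depending only on $\gG$, not on $\gH$). Second, and this is the heart of the matter, it replaces your B\'ezout-against-a-fiber argument by an intersection-theoretic lemma (Lemma~\ref{ineq}) showing directly that $\deg_{L_{\Gamma_\gH}}(W_T)\le\deg_{L_{\Gamma_\gH}}(V_{T,0})$ with \emph{no} multiplicative constant, using the decomposition $L_{\Gamma_\gH}=\phi_1^*L_{\Gamma_1}\otimes\phi_2^*L_{\Gamma_2}$, flatness over an open set of the base, and constancy of Hilbert polynomials in flat families (with a normalization trick to handle the non-flat fibers). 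Your constructibility/uniform-fiber-degree appeal to Lemma~\ref{fiberwise} would produce \emph{some} bound, but polynomial in the complexity of $\phi_T$ as well, which brings back the $T$-dependence you cannot absorb.
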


\begin{proof}[Proof of Theorem \ref{degreebound}]
  Since $k_\Gamma$ depends only on $S$, we may replace $L^{\otimes k_\Gamma}_\Gamma$ with $L_\Gamma$, after possibly replacing $f_S$.
  
  Recall the situation reached at the end of the proof of Theorem
  \ref{main-const-thm} (i). Let $\Pi(d)_S$ denote the image of
  $\Pi(d)^\circ$ in $S$ and let $V_0$ denote its Zariski
  closure. Let $V_{T,0}$ denote the Zariski closure of $\Pi(d)_{S_T}$
  in $S_T$. Observe that $\phi_{T}(V_{T,0})$ is closed as $\phi_{T}$
  extends to a (projective) morphism on the Baily--Borel
  compactifications.

  We will prove the theorem in a succession of lemmas. The following
  lemma will be used in the proofs of Lemma \ref{obs} (ii) and (iii).

\begin{lem}\label{irred0}
  The subvariety $V_{T,0}$ is irreducible and $\iota_T(V_{T,0})=V_0$.
\end{lem}

\begin{proof}
  We appeal to the commutative diagram of natural morphisms
  \begin{center}
    \begin{tikzcd}
      P_T\times\Omega_T \arrow[d, "f_T" description] \arrow[r] \arrow[rr, "\iota_{P_T}" description, bend left] & \iota_T^*(P\times\Omega) \arrow[ld, "g_T" description] \arrow[r] & P\times\Omega \arrow[d, "f" description] \\
      S_T \arrow[rr, "\iota_T" description] & & S,
    \end{tikzcd}
  \end{center}
  where $\iota_T^*(P\times\Omega)$ is the pullback of $P\times\Omega$
  to $S_T$. Recall that $\iota_{P_T}$ is injective.

  First observe that the $\gG(\CC)$--orbit of (the image of)
  $\Pi(d)_T$ in $P\times\Omega$ is $\Pi(d)^\circ$. Indeed,
  the left-right inclusion is clear from Proposition
  \ref{loc-closed}. Therefore, let $(p,Y)\in\Pi(d)^\circ$
  and write $p=[x,g]$. Then $gY=\gamma\check{X}_{\gH,x_2}$ for some
  $\gamma\in\Gamma$ and some $x_2\in X_2$. We rewrite
  $p=[\gamma^{-1}x,\gamma^{-1}g]$ and we see that
  $\gamma^{-1}g(p,Y)\in P_T\times\Omega_T$, which proves the claim.

  Let $Z_T$ denote the Zariski closure of (the image of) $\Pi(d)_T$ in
  $\iota_T^*(P\times\Omega)$. Then, by the preceding paragraph, (the
  image of) $\gG(\CC)Z_T$ is a Zariski dense subset of
  $\Pi(d)^\circ$. Let $Y_T$ denote an irreducible component of $Z_T$
  such that $\gG(\CC)Y_T$ is also dense in $\Pi(d)^\circ$. Then, since
  the Zariski closure of $f(\Pi(d)^\circ)$ is $V_0$, we see that
  $f(\gG(\CC)Y_T)=f(Y_T)$ is dense in $V_0$ and, by the commutativity
  of the diagram, equal to $\iota_T(Y_{S_T})$, where
  $Y_{S_T}=g_T(Y_T)$.

  Now let $Y'_T$ denote another irreducible component of $Z_T$. We
  claim that $g_T(Y'_T)$ is contained in the Zariski closure of
  $Y_{S_T}$. To see this, observe that $Y'_T$ is contained in the
  Zariski closure of $\gG(\CC)Y_T$ in
  $\iota_T^*(P\times\Omega)$. Therefore, $g_T(Y'_T)$ is contained in
  the Zariski closure of $g_T(\gG(\CC)Y_T)=g_T(Y_T)$.

  It follows that the Zariski closure of $Y_{S_T}$ is equal to
  $V_{T,0}$. Hence, $V_{T,0}$ is irreducible. The fact that
  $\iota_T(V_{T,0})=V_0$ now follows easily from the facts that
  $\iota_T$ is closed and $V_0$ is equal to the Zariski closure of
  $f(\Pi(d)^\circ)$.
\end{proof}

The following lemma will reduce the proof to Lemma \ref{ineq}.

\begin{lem}\label{obs}
  \

  \begin{itemize}
  \item[(i)] $\deg_{L_\Gamma}(V_0)\leq f_S(\deg_{L_\Gamma}(V))$;
  \item[(ii)]
    $\deg_{L_{\Gamma_\gH}}(V_{T,0})\leq r^3_\gG\deg_{L_\Gamma}(V_0)$,
    where $r_\gG$ denotes the rank of $\gG$;
  \item[(iii)]
    $\deg_{L_\Gamma}(W)\leq r^3_\gG\deg_{L_{\Gamma_\gH}}(W_T)$.
  \end{itemize}
\end{lem}

\begin{proof}
  \

  \begin{itemize}
  \item[(i)] There exist $i,j\in\{1,\ldots,n\}$ such that the Zariski
    closure of $U_{ij}\cap\Pi(d)^\circ$ dominates
    $V_0$. Therefore, the result follows easily from Lemma
    \ref{deginput}.
    % \footnote{See, for example, the user Angelo's
    %   answer to the Mathoverflow question
    %   https://mathoverflow.net/questions/63451/degree-of-image-of-a-polynomial-map}
  \item[(ii)] This follows immediately from Lemma \ref{irred0},
    \cite[Lemma 4.1]{DJK} (a corollary of \cite[Proposition
    5.3.10]{KY:AO}) and \cite[Lemma 4.2]{DJK}.
  \item[(iii)] See (ii).
  \end{itemize}
\end{proof}

The following lemma will be used in the proof of Lemma \ref{ineq}.

\begin{lem}\label{obs0}
  \

  \begin{itemize}
  \item[(i)] $W_T$ is an irreducible component of the fiber of
    $V_{T,0}\to \phi_{T}(V_{T,0})$ over $z_2$;
  \item[(ii)] $\dim W_T=\dim X_1-d$ is the generic dimension of the
    fibers of $V_{T,0}\to \phi_{T}(V_{T,0})$;
  \end{itemize}
\end{lem}

\begin{proof}
  \

  \begin{itemize}
  \item[(i)] Observe that $V_{T,0}$ is contained in $V_T$ and $W_T$ is
    contained in $V_{T,0}$. Therefore, since $W_T$ is an irreducible
    component of the fiber of $V_T\to\phi_{T}(V_T)$ over $z_2$, the
    claim follows.

  \item[(ii)] First observe that the Zariski closures of
    $\Pi(d)_{S_T}$ and $\overline{\Pi}(d)_{S_T}$ coincide. That is,
    they are both $V_{T,0}$. Also, $\phi_{T}(V_{T,0})$ is contained in
    the Zariski closure of $\phi_{T}(\Pi(d)_{S_T})$. However,
    $\phi_{T}(V_{T,0})$ is closed and contains
    $\phi_{T}(\Pi(d)_{S_T})$. Therefore, $\phi_{T}(V_{T,0})$ is equal
    to the Zariski closure of $\phi_{T}(\Pi(d)_{S_T})$. Since, by
    Theorem \ref{main-const-thm}, the fibers of $\phi_{T,d}$ are pure
    of dimension $\dim X_1-d$, the claim follows.
  \end{itemize}
\end{proof}

It remains to prove the following lemma.

\begin{lem}\label{ineq}
  We have
  \begin{align*}
    \deg_{L_{\Gamma_\gH}}(W_T)\leq\deg_{L_{\Gamma_\gH}}(V_{T,0}).
  \end{align*}
\end{lem}

\begin{proof}
  In order to simplify notation, we will, for the remainder of the
  proof, replace $\Gamma_\gH$ with $\Gamma$, $S_T$ with $S$, $V_{T,0}$
  with $V$, $W_T$ with $W$, and $\dim W$ with $d$ (as opposed to
  $\dim X_1-d$). We will also reassign $\iota$ to be the (proper)
  closed embedding $V\rightarrow S$. Finally, for $i=1,2$, we let
  $\Gamma_i$ denote the image of $\Gamma$ in $\gH_i(\QQ)$, we let $f$
  denote the natural morphism $S\rightarrow S_1\times S_2$, where
  $S_i=\Gamma_i\backslash X_i$ is the Shimura variety associated with
  $(\gH_i,X_i)$ and $\Gamma_i$, and we let $\phi_i$ denote the
  projection $S\rightarrow S_i$ (which factors as $f$ composed with
  the natural projection $f_i$ from $S_1\times S_2$ to $S_i$).

  By the projection formula, the degree of
  $c_1(L_\Gamma)^d\cap[\iota(V)]$ is equal to the degree of
  $c_1(\iota^*L_\Gamma)^d\cap[V]\in A_0(V)$. Let
  $V_2=\phi_2(\iota(V))$, which, as explained above, is closed. There
  exists a Zariski open subset $U_2\subset V_2$ such that, if
  $U=(\phi_2\circ\iota)^{-1}(U_2)$, then $(\phi_2\circ\iota)_{|U}$ is
  flat. In particular, its fibers are equidimensional, of dimension
  $d$ (by Lemma \ref{obs0} (ii)).

  Now consider the excision exact sequence
  \begin{align*}
    A_0(V\setminus U)\xrightarrow{i_*} A_0(V)\xrightarrow{j^*} A_0(U),
  \end{align*}
  where $i_*$ is the pushforward associated with the (proper) closed
  embedding $i:V\setminus U\rightarrow V$, and $j^*$ is the pullback
  associated with the (flat) inclusion $j:U\rightarrow V$. We see that
  the degree of $c_1(\iota^*L_\Gamma)^{\dim V}\cap[V]$ is at least the
  degree of
  \begin{align*}
    j^*(c_1(\iota^*L_\Gamma)^{\dim V}\cap[V])=c_1(j^*\iota^*L_\Gamma)^{\dim V}\cap[U]\in A_0(U).
  \end{align*}	

  Next, as in \cite[Proposition 5.3.2 (1)]{KY:AO}, we have
  \begin{align*}
    L_\Gamma=f^*(L_{\Gamma_1}\boxtimes L_{\Gamma_1})=f^*(f_1^*L_{\Gamma_1}\otimes f_2^*L_{\Gamma_2})=\phi_1^*L_{\Gamma_1}\otimes\phi_2^*L_{\Gamma_2}.
  \end{align*}
  Therefore,
  \begin{align*}
    j^*\iota^*L_\Gamma=j^*\iota^*\phi_1^*L_{\Gamma_1}\otimes j^*\iota^*\phi_2^*L_{\Gamma_2},
  \end{align*}
  and so
  \begin{align*}
    c_1(j^*\iota^*L_\Gamma)^{\dim V}\cap[U]=\sum_{r=0}^{\dim V}\binom{\dim V}{r}\left[c_1(j^*\iota^*\phi_1^*L_{\Gamma_1})^r\cap\left(c_1(j^*\iota^*\phi_2^*L_{\Gamma_2})^{\dim V-r}\cap[U]\right)\right].
  \end{align*}
  Since $\phi_2\circ\iota\circ j$ is flat,
  \begin{align*}
    c_1(j^*\iota^*\phi_2^*L_{\Gamma_2})^{\dim V-r}\cap[U]=j^*\iota^*\phi^*_2(c_1(L_{\Gamma_2})^{\dim V-r}\cap[U_2])\in A_r(U),
  \end{align*}
  where $j^*\iota^*\phi^*_2$ is the flat pull-back
  $A_0(U_2)\rightarrow A_r(U)$. Therefore, since $L_{\Gamma_2}$ is
  ample, these classes can be represented by non-negative sums of
  $r$--cycles (which are zero if $r>d$). Hence, since $L_{\Gamma_1}$
  is ample, each summand of the above sum can be represented by a
  non-negative sum of $0$--cycles.  In particular,
  $c_1(j^*\iota^*\phi_2^*L_{\Gamma_2})^{\dim V-d}\cap[U]$ can be
  represented by the cycle associated to finitely many fibers of
  $(\phi_2\circ\iota\circ j)\rest{U}$. For such a fiber $F$, we have
  \begin{align*}
    c_1(j^*\iota^*\phi_1^*L_{\Gamma_1})^d\cap [F]=c_1(j^*\iota^*L_{\Gamma})^d\cap [F]=\deg_{j^*\iota^*L_{\Gamma}}(F)=\deg_{L_\Gamma}(\iota(j(F))),
  \end{align*}
  where the first equality can be deduced from a binomial expression
  as above. Therefore, since $(\phi_2\circ\iota\circ j)\rest{U}$ is
  flat, and all fibers of a flat family of subschemes of a projective
  space have the same Hilbert polynomial, we conclude that, if $W$ is
  an irreducible component of $(\phi_2\circ\iota\circ j)^{-1}(z)$ for
  some $z\in U_2$, then
  \begin{align*}
    \deg_{L_\Gamma}(W)\leq\deg_{L_\Gamma}(V),
  \end{align*}
  as claimed.

  Therefore, by Lemma \ref{obs0} (i), it remains to deal with the case
  when $W$ is an irreducible component of
  $(\phi_{2}\circ\iota)^{-1}(z)$ for some $z\in V_2\setminus U_2$. To
  that end, let $C$ be an irreducible algebraic curve in $V_2$ passing
  through $z$ such that $C\cap U_2\neq\emptyset$ (just choose a point
  in $U_2$ and use the fact that, for any two points $x$ and $y$ in an
  irreducible algebraic variety $Z$, there exists an irreducible
  algebraic curve $C\subset Z$ such that $x,y\in C$). Let $Y$ denote
  an irreducible component of $(\phi_{2}\circ\iota)^{-1}(C)$
  containing $W$. Note that $W\subsetneq Y$ as
  \begin{align*}
    \dim Y\geq\dim V-\dim V_2+1=d+1>\dim W.
  \end{align*}

  Therefore, the morphism $Y\rightarrow C$ is dominant. Let
  $\tilde{C}$ denote the normalization of $C$, and let $\tilde{Y}$
  denote the fiber product $Y\times_C\tilde{C}$. Note that $\tilde{Y}$
  is irreducible and $\tilde{Y}\rightarrow\tilde{C}$ is
  dominant. Since $\tilde{C}$ is regular, it follows from \cite[Prop
  9.7]{hartshorne1977algebraic} that $\tilde{Y}\rightarrow\tilde{C}$
  is flat. As such, its fibers all have the same degree with respect
  to $\eta^*L_\Gamma$.

  Let $\eta:\tilde{Y}\rightarrow Y$ denote the natural map. It is
  finite and surjective. Therefore, if we let $\tilde{W}$ denote an
  irreducible component of $\eta^{-1}(W)$, we have
  \begin{align*}
    \deg_{L_\Gamma}(W)\leq\deg_{\eta^*L_\Gamma}(\tilde{W})
  \end{align*}
  and, since $\tilde{W}$ is a fiber of $\tilde{Y}\rightarrow\tilde{C}$
  and $C$ passes through $U_2$, the claim follows.
\end{proof}
The theorem now follows immediately, combining Lemma \ref{obs}
(i)--(iii) and Lemma \ref{ineq}.
\end{proof}

\section{Effective determination of the weakly optimal locus}
\label{sec:fully-effective}

Recall the setup described in
Section~\ref{sec:intro-fully-effective}. This section is devoted to
the proof of Theorem~\ref{thm:effective-computation}.

Throughout the proof, we think of the family $T$ as embedded in
projective space with respect to the prescribed embedding.  We begin
by computing an affine cover $\{V_\alpha\}$ of $V$, such that over
each $V_\alpha$ once can select an explicit set of sections
$\omega_1,\ldots,\omega_g$ for the sheaf of relative differentials
$\Omega^1_{T/V}$ and an additional set of meromorphic differentials of
the second kind (i.e. with vanishing residues)
$\omega_{g+1},\ldots,\omega_{2g}$ for the sheaf
$\Omega^1_{T/V}(N\cdot D)$ where $D$ is a hyperplane divisor and
$n\gg1$, such that that $\omega_1,\ldots,\omega_{2g}$ are pointwise
linearly independent everywhere. Such a choice of differentials can be
computed explicitly for (families of) algebraic curves by classical
methods. For the holomorphic differentials, see
e.g. \cite[Theorem~9.3.1]{bk:plane-curves} where it is attributed to
Abel and Riemann. For meromorphic differentials of the 2nd type see
e.g. \cite[Propositions~9.3.8, 9.3.9]{bk:plane-curves}. Below we
continue with $V$ replaced by one of the $V_\alpha$, and assume that
the sections above are pointwise linearly independent over $V$.

Having computed a base for the de-Rham cohomology $H^1(T/V)$ we may
now compute the Gauss--Manin connection
$\nabla:H^1(T/V)\to H^1(T/V)\otimes\Omega^1_V$ explicitly. The fact
that the Gauss--Manin connection admits a purely algebraic construction
is essentially due to Manin \cite{manin:kernel}. The approach of Manin
is fully explicit and there is no difficulty in principle carrying it
out computationally.

The sections $\omega_1,\ldots,\omega_{2g}$ provide a trivialization of
$H^1(T/V)$. Thinking of $V\times\gGL_{2g}(\CC)$ as a principal
$\gGL_{2g}(\CC)$--bundle with respect to the action $g(v,\Pi)=(v,\Pi g^{-1})$, we
may express $\nabla$ as a flat connection on this trivial bundle as
follows
\begin{equation}
  d\Pi = \Omega \cdot \Pi, \qquad \Omega\in\mathfrak{gl}_n(\Lambda^1_V).
\end{equation}
In fact, the construction that follows could be expressed in terms of
this $\gGL_{2g}(\CC)$--connection. However, to stress the relationship with
the general formalism of Shimura varieties considered in the first
part of the paper, we show that one can explicitly compute a
$\gGSp_{2g}(\CC)$--bundle $P\subset V\times\gGL_{2g}(\CC)$ compatible with $\nabla$.

The existence of such a bundle $P$ follows from the fact that $\nabla$
preserves the symplectic form on $H^1(T/V)$ induced by duality from
the intersection form on $H_1(T/V,\ZZ)$. More explicitly, let
$\delta_1(v),\ldots,\delta_{2g}(v)$ denote an (eventually multivalued)
choice of symplectic basis of $H_1(T/V,\ZZ)$, i.e. with the
intersection form $(\delta_i,\delta_j)$ given by
\begin{equation}
  J = \begin{pmatrix}
    0 & I_g \\
    -I_g & 0
  \end{pmatrix}.
\end{equation}
Then
\begin{equation}\label{eq:Pi-def}
  \Pi(v) =
  \begin{pmatrix}
    \oint_{\delta_1(v)}\omega_1 & \cdots & \oint_{\delta_{2g}(v)}\omega_1 \\
    \vdots & \ddots & \vdots \\
    \oint_{\delta_1(v)}\omega_{2g} & \cdots & \oint_{\delta_{2g}(v)}\omega_{2g}
  \end{pmatrix} =
  \begin{pmatrix}
    A(v) & B(v) \\ C(v) & D(v)
  \end{pmatrix}
\end{equation}
is a section of $\nabla$ and $\Pi J \Pi^T = \Lambda$ defines a
regular mapping from $V$ to $\gGL_{2g}(\CC)$ (single-valuedness follows from
that fact that the monodromy of $\Pi(v)$ respects the intersection
form, and regularity then follows from GAGA). Thus the subset of
$V\times\gGL_{2g}(\CC)$ defined by $\Pi J\Pi^T=\Lambda$ is a
$\nabla$--invariant principal $\gSp_{2g}(\CC)$--bundle with respect to the
action
\begin{equation}
  g\cdot (v,\Pi) = (v,\Pi\cdot g^{-1}) \quad g\in\gSp_{2g}(\CC).
\end{equation}
However, as we will see
below, this bundle is not defined over a number field, and we will show instead how to construct the corresponding
$\gGSp_{2g}(\CC)$--bundle explicitly over a number field.

By definition, $\Lambda(v)_{ij}=(\omega_i,\omega_j)_v$ is the matrix
representing the symplectic form on $H^1(X_v)$. The explicit
computation of this form reduces to the following bilinear relation
for meromorphic differentials of the second kind.

\begin{lem}
  Let $\omega,\eta$ be two meromorphic differentials of the second
  kind. Then
  \begin{equation}\label{eq:bilinear-recip}
    \frac{1}{2\pi i} \left( \sum_{j=1}^g \oint_{\delta_j}\omega\oint_{\delta_{j+g}}\eta -
      \oint_{\delta_{j+g}}\omega\oint_{\delta_{j}}\eta \right) =
    \sum_{P} {\rm res}_P (f\eta)
  \end{equation}
  where $P$ ranges over the poles of $\omega$ and $\eta$, and $f$ is
  any primitive of $\omega$. Note that since $\eta$ has no residues,
  ${\rm res}_P(f\eta)$ is independent of the choice of the primitive.
\end{lem}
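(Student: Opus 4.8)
The plan is to prove \eqref{eq:bilinear-recip} by the classical argument of cutting the fibre $X=X_v$ along a canonical system of loops and applying the residue theorem on the resulting polygon. First I would choose a base point $p_0\in X$ avoiding all poles of $\omega$ and $\eta$ and realize $X$ as a $4g$-gon $\Delta$ obtained by cutting along closed curves through $p_0$ representing the symplectic basis $\delta_1,\dots,\delta_{2g}$ of $H_1(X,\ZZ)$, with boundary word $a_1b_1a_1^{-1}b_1^{-1}\cdots a_gb_ga_g^{-1}b_g^{-1}$, where the edge $a_j$ represents $\delta_j$ and $b_j$ represents $\delta_{j+g}$. Since there are only finitely many poles of $\omega$ and $\eta$, the cutting curves can be chosen so that all of these poles lie in the interior of $\Delta$.

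Next, since $\omega$ is of the second kind all its local residues vanish, so on the simply connected region $\Delta$ the form $\omega$ admits a single-valued meromorphic primitive $f$, defined up to an additive constant, whose poles are exactly the poles of $\omega$. Then $f\eta$ is a meromorphic $1$-form on $\Delta$ whose poles are precisely the poles of $\omega$ (where $f$ blows up) together with the poles of $\eta$. The residue theorem on $\Delta$ gives
\[
  \frac{1}{2\pi i}\oint_{\partial\Delta} f\eta \;=\; \sum_{P} {\rm res}_P(f\eta),
\]
the sum ranging over the interior poles, i.e. over the poles of $\omega$ and $\eta$; moreover at a pole of $\eta$ at which $\omega$ is regular the quantity ${\rm res}_P(f\eta)$ is unaffected by the additive constant in $f$, since ${\rm res}_P(c\,\eta)=c\cdot{\rm res}_P(\eta)=0$ as $\eta$ is residue-free, which is precisely the well-definedness asserted in the lemma.

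It then remains to evaluate $\oint_{\partial\Delta}f\eta$ directly. Grouping $\partial\Delta$ into the $g$ quadruples of edges $a_j,b_j,a_j^{-1},b_j^{-1}$, and using that two identified points on the edges $a_j$ and $a_j^{-1}$ (resp. $b_j$ and $b_j^{-1}$) are joined inside $\Delta$ by a path projecting to the complementary basis cycle, the two copies of each edge carry values of $f$ differing by the corresponding period of $\omega$; since $\eta$ is closed away from its poles and well defined on $X$, the contributions of the paired edges combine into period integrals of $\eta$, and a direct computation yields
\[
  \oint_{\partial\Delta} f\eta \;=\; \sum_{j=1}^g\left(\oint_{\delta_j}\omega\oint_{\delta_{j+g}}\eta-\oint_{\delta_{j+g}}\omega\oint_{\delta_j}\eta\right).
\]
Combining this with the previous display gives \eqref{eq:bilinear-recip}.

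The main obstacle is the careful bookkeeping in this last step: keeping track of the orientations of the paired edges, of which period of $\omega$ is acquired across each identification, and of the resulting signs, as well as ensuring that no pole of $\omega$ or $\eta$ — in particular no pole of $f$ — lies on $\partial\Delta$, which is why the cutting curves and the base point $p_0$ are chosen generically. Everything else is standard, and applying the lemma to $\omega=\omega_i$, $\eta=\omega_j$ then expresses the entries $\Lambda(v)_{ij}=(\omega_i,\omega_j)_v$ in terms of residues, which are computable from the explicit equations of the family.
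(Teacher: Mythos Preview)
Your argument is correct and is precisely the classical proof of the Riemann bilinear relations for differentials of the second kind (as in, e.g., Griffiths--Harris). The paper itself does not supply a proof of this lemma: it is stated as a classical fact and immediately used, with the sentence following the lemma simply identifying the left-hand side as the symplectic pairing and noting that the right-hand side is explicitly computable from local Laurent data. So there is nothing to compare against; your proposal fills in the omitted standard argument, and your caveat about the sign bookkeeping on paired edges is exactly the point one has to be careful with.
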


The left hand side of~\eqref{eq:bilinear-recip} is, by definition, the
symplectic pairing $(\omega,\eta)$ up to the constant $2\pi i$. The
right hand side can be explicitly computed in local coordinates around
$P\in V$, i.e. it depends only on finitely many Laurent coefficients
of $\omega,\eta$ in local coordinates around the poles. Using this, one
may explicitly compute each entry of $2\pi \Lambda(v)$ as a regular
function on $V$.

Having computed $2\pi\Lambda(v)$, we further simplify the computation as
follows. The Riemann bilinear relations imply that
$\omega_1,\ldots,\omega_g$ span an isotropic space, and $\Lambda(v)$
defines a non-degenerate pairing between this space and the span of
$\omega_{g+1},\ldots,\omega_{2g}$. By elementary linear algebra, one
may now replace each section $\omega_{g+j}$ by a linear combination
$\omega_{g+j}'$ such that $\omega_1,\ldots,\omega_g$ and
$\omega'_{g+1},\ldots,\omega'_{2g}$ form a standard symplectic
basis. Assume without loss of generality that we have made such a
choice, so that $\Lambda(v)\equiv 2\pi J$. With this choice, $\nabla$
restricts to a connection on the trivial bundle $P=V\times\gGSp_{2g}(\CC)$
with the connection equation
\begin{equation}
  d \Pi=\Omega\cdot\Pi, \qquad \Omega\in\mathfrak{sp}_{2g}(\Lambda^1_V)
\end{equation}
and the left $\gGSp_{2g}(\CC)$-action given as before by
$g(s,\Pi)=(s,\Pi g^{-1})$.

Denote by $X=\mathcal{H}_g$ the Siegel upper half-space, by $\check X$
the compact dual, and by $X'\subset\check X$ the set of symmetric
$g\times g$ matrices. We have a $\gGSp_{2g}(\CC)$--equivariant rational map
$\beta:P\to X'$ given by $\beta(v,\Pi)=B^{-1}A$ where $A$ and $B$ are the
blocks given in~\eqref{eq:Pi-def}. One can verify that, since
$\Pi\in \gGSp_{2g}(\CC)$, the image of $\beta$, when defined, is an element
of $X'$. The map $\beta$ extends to a regular map
$\beta:P\to\check X$.

Recall that we denote by $ f:\tilde V\to V$ an \'etale cover,
$\tilde T\to\tilde V$ the base change of $T$ by $f$, and choose $f$
such that $\tilde T$ is compatible with an $N$-level structure (say
for $N=3$). We denote by $\iota:\tilde V\to\cA_{g,N}$ the
corresponding moduli map.

\begin{prop}\label{prop:connection-compare}
  We have $f^*(P,\nabla)\simeq  \iota^*(\cP,\nabla_0)$ where $\cP$
  denotes the canonical bundle on $\mathcal{A}_{g,N}$ and $\nabla_0$
  its canonical connection.
\end{prop}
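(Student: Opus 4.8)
The plan is to identify both $f^*(P,\nabla)$ and $\iota^*(\cP,\nabla_0)$ with the \emph{same} object — the bundle of de Rham frames of $H^1_{\mathrm{dR}}(\tilde T/\tilde V)$ that carry the symplectic pairing to the standard one up to a similitude, equipped with its Gauss--Manin connection — after which the isomorphism is formal. First I would recall the concrete, moduli-theoretic incarnation of the canonical bundle on $\cA_{g,N}$: writing $\mathcal{X}\to\cA_{g,N}$ for the universal principally polarized abelian scheme with its level-$N$ structure and $\cH=H^1_{\mathrm{dR}}(\mathcal{X}/\cA_{g,N})$, the bundle $\cH$ carries the Gauss--Manin connection $\nabla_{\mathrm{GM}}$, the Hodge filtration $F^1\cH\subset\cH$, and a $\nabla_{\mathrm{GM}}$-horizontal polarization pairing (valued in a Tate twist, hence honestly symplectic only after the usual $2\pi i$-normalization). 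By the theory of standard/automorphic vector bundles on Shimura varieties of Hodge type (see \cite{milne:connected} and the construction recalled in Sections~\ref{SPBs}--\ref{fol}, following \cite{blaine:fol}), the canonical principal $\gGSp_{2g}(\CC)$-bundle $\cP$ is canonically the bundle of trivializations $\cH\xrightarrow{\sim}\OOO^{2g}$ carrying the polarization pairing to the standard symplectic form up to a scalar, the map $\beta$ records the Hodge filtration, and $\nabla_0$ corresponds to $\nabla_{\mathrm{GM}}$.

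Next I would transport this description along $\iota$. Since $\iota$ is by construction the moduli map of the Jacobian family $\mathrm{Jac}(\tilde T/\tilde V)\to\tilde V$ with its canonical principal polarization and chosen level structure, we have $\iota^*\mathcal{X}\cong\mathrm{Jac}(\tilde T/\tilde V)$ as polarized abelian schemes with level structure, and hence $\iota^*(\cH,\nabla_{\mathrm{GM}},\text{pairing})\cong\big(H^1_{\mathrm{dR}}(\mathrm{Jac}(\tilde T/\tilde V)/\tilde V),\nabla_{\mathrm{GM}},\text{pairing}\big)$. The classical comparison between a smooth proper curve and its Jacobian gives a canonical isomorphism $H^1_{\mathrm{dR}}(\mathrm{Jac}(C))\cong H^1_{\mathrm{dR}}(C)$, functorial in the base and compatible with Gauss--Manin and with the theta/cup-product pairings; in families over $\tilde V$ this identifies $\iota^*\cH$ with $H^1_{\mathrm{dR}}(\tilde T/\tilde V)$ compatibly with all structure. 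Combining with the first step, $\iota^*(\cP,\nabla_0)$ is the bundle of symplectic-up-to-scalar de Rham frames of $H^1_{\mathrm{dR}}(\tilde T/\tilde V)$ with $\nabla_{\mathrm{GM}}$.

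Finally I would match this with $f^*(P,\nabla)$. Here $H^1_{\mathrm{dR}}(\tilde T/\tilde V)=f^*H^1_{\mathrm{dR}}(T/V)$, and the explicit trivialization $\omega_1,\dots,\omega_{2g}$ — renormalized so that $\omega_1,\dots,\omega_g,\omega'_{g+1},\dots,\omega'_{2g}$ is a standard symplectic basis, i.e. $\Pi J\Pi^T\equiv 2\pi J$ — identifies $f^*P=\tilde V\times\gGSp_{2g}(\CC)$ with exactly this de Rham frame bundle: a point $(v,\Pi)$ is sent to the frame recording the $\Pi$-coordinates of the $\omega_i$, the $\gGSp_{2g}(\CC)$-equivariance together with the formula $\beta(v,\Pi)=B^{-1}A$ reproduces the Hodge filtration bookkeeping, and the connection equation $d\Pi=\Omega\cdot\Pi$, with $\Omega$ the connection matrix of $\nabla_{\mathrm{GM}}$ in the basis $(\omega_i)$, says precisely that the horizontal sections of $f^*\nabla$ are the flat (period) frames, so $f^*\nabla=\nabla_{\mathrm{GM}}$ under the identification. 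The three steps together give $f^*(P,\nabla)\simeq\iota^*(\cP,\nabla_0)$.

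The hard part will be the first step: pinning down the exact dictionary between the abstract standard principal bundle $(\cP,\nabla_0)$ and its de Rham incarnation, including all normalization constants — the $2\pi i$ (respectively $2\pi$) in the polarization pairing, the resulting similitude factor and the precise choice of $\gGSp$- versus $\gSp$-structure, and the sign and convention choices in $\beta$ and the Hodge filtration — so that the assertion ``canonical connection $=$ Gauss--Manin'' holds on the nose rather than merely up to an (unspecified) automorphism of the bundle. The remaining ingredients (the curve--Jacobian de Rham comparison, functoriality of the moduli map, GAGA to pass between the analytic and algebraic pictures, and the bookkeeping of the explicit trivialization) are standard once these normalizations are fixed consistently.
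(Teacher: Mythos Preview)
Your approach is correct, but it differs from the paper's. The paper argues by monodromy: since flat principal $\gGSp_{2g}(\CC)$-bundles on $\tilde V$ are classified by their monodromy representations $\pi_1(\tilde V,v_0)\to\gGSp_{2g}(\CC)$, it suffices to check that $f^*\nabla$ and $\iota^*\nabla_0$ induce the same representation. After reducing (by functoriality of Gauss--Manin) to the case where $T\to V$ itself carries the level structure, the paper uses the period section $\Pi(v)$ from~\eqref{eq:Pi-def} to write $\iota(v)=\pi(\beta(v,\Pi(v)))$; for a loop $\gamma$ based at $v_0$, the lifted path in $X$ ends at $g\cdot x_0$ for a unique $g\in\Gamma$, and this $g$ is visibly the monodromy of both $\nabla$ (since $\Pi(v)$ is horizontal) and $\iota^*\nabla_0$ (by the description of $\cP$ as $\Gamma\backslash(X\times\gGSp_{2g}(\CC))$).

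Your route instead identifies both bundles \emph{directly} with the symplectic frame bundle of $H^1_{\mathrm{dR}}(\tilde T/\tilde V)$ carrying its Gauss--Manin connection, via the curve--Jacobian comparison and the de Rham incarnation of the canonical bundle on $\cA_{g,N}$. This is more structural and explains the isomorphism in moduli-theoretic terms, but it requires as input the identification of the abstract $(\cP,\nabla_0)$ with the de Rham frame bundle and its Gauss--Manin connection --- exactly the normalization-heavy step you flag as the hard part. The paper's monodromy argument sidesteps this dictionary entirely, working purely from the analytic quotient description of $\cP$ and the explicit formula for $\iota$; it is shorter and self-contained, at the cost of being less transparent about \emph{why} the two bundles coincide.
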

\begin{proof}
  By functioriality of the Gauss--Manin connection, if $\tilde\nabla$
  denotes the connection on $\tilde P\to\tilde V$ then we have
  $\tilde\nabla=f^*\nabla$. It will therefore suffice to prove that
  $(\tilde P,\tilde\nabla)\simeq\iota^*(\cP,\nabla_0)$. Thus, we may
  assume, without loss of generality, that the family $T\to V$
  already respects the $N$--level structure and that we have a map
  $\iota:V\to S=\cA_{g,N}$. Denote by $\Gamma\subset\gGSp_{2g}(\ZZ)$ the
  neat subgroup corresponding to the $N$-level structure.

  Choose a generic $v_0\in V$ and consider the period map $\Pi(v)$
  defined by~\eqref{eq:Pi-def} around $v_0$. Since
  $\delta_1,\ldots,\delta_{2g}$ form a sympectic basis, the Riemann
  bilinear relations imply that $\Pi(v)\in X$ globally (i.e. after
  arbitrary analytic continuation). By definition of the moduli
  interpretation, $\iota$ is given by
  \begin{equation}\label{eq:iota-def}
    \iota(v) = \pi(\beta(v,\Pi(v))), \qquad \pi:X\to\Gamma\backslash X.
  \end{equation}
  Indeed, $\Pi(v)$ is the period matrix of the fiber $T_v$. Hence, the
  Jacobian of $T_v$ is given by the lattice spanned by the columns of
  $(A\ B)$, and $\beta(v,\Pi(v))=B^{-1}(v)A(v)\in X$ is the point
  representing this Jacobian in $X$. In particular, write
  $x_0=\beta(v_0,\Pi(v_0))$ so that $\pi(x_0)=\iota(v_0)$.

  We will show that $(P,\nabla)\simeq\iota^*(\cP,\nabla_0)$ by showing
  that they define the same $\Gamma$--representation of
  $\pi_0(V,v_0)$. Let $\gamma\in\pi_0(V,v_0)$ be a closed loop, and
  let $\gamma_X\subset X$ be the curve obtained by lifting $\gamma$ to
  $X$. Then the endpoint of $\gamma_X$ is a point $g\cdot x_0$ for
  some $g\in\Gamma$. According to~\eqref{eq:iota-def} and the
  equivariance of $\beta$, the monodromy of $\nabla$ along $\gamma$ is
  $g$, as it is the unique element of $\Gamma$ mapping $x_0$ to
  $g\cdot x_0$. On the other hand, the monodromy of $\iota^*\nabla_0$
  along $\gamma$ is the monodromy of $\nabla_0$ along $\iota(\gamma)$,
  which equals $g$ for the same reason. This shows that the
  representations are indeed the same.
\end{proof}

One can repeat the proof of Proposition~\ref{loc-closed} with the
bundle $(P,\nabla)$ in place of $(\cP,\nabla_0)$ to define sets
$\Pi'(d)\subset P$. By Proposition~\ref{prop:connection-compare} it
follows that $f^*\Pi'(d)=\iota^*\Pi(d)$ over $f^{-1}(V)$ as claimed.

\bibliographystyle{abbrv}
\bibliography{basic}

\end{document}